\newtheorem{theorem}{Theorem}[section]
\newtheorem{lemma}[theorem]{Lemma}
\newtheorem{proposition}[theorem]{Proposition}
\newtheorem{corollary}[theorem]{Corollary}
\theoremstyle{definition}
\newtheorem{definition}[theorem]{Definition}
\theoremstyle{remark}
\newtheorem{remark}[theorem]{Remark}
\numberwithin{equation}{section}
\newcommand\C{\mathbb C}
\newcommand\D{\mathbb D}
\newcommand\rikmu{R^\infty(K,\mu)}
\newcommand\limu{L^\infty(\mu)}
\renewcommand\i{\infty}
\newcommand\area{{\frak m}}
\newcommand\CT{{\mathcal C}}
\newcommand{\quotes}[1]{``#1''}
\begin{document}

\setcounter{page}{1}

\title[Weak-Star Closed Algebras]{Invertibility in Weak-Star Closed Algebras of Analytic Functions}

\author[L. Yang]{Liming Yang$^1$}

\address{$^1$Department of Mathematics, Virginia Polytechnic and State University, Blacksburg, VA 24061.}
\email{\textcolor[rgb]{0.00,0.00,0.84}{yliming@vt.edu}}

\subjclass[2010]{Primary 47A15; Secondary 30C85, 31A15, 46E15, 47B38}

\keywords{Analytic Capacity, Cauchy Transform, Subnormal Operator, and Spectral Mapping Theorem}


\begin{abstract} For $K\subset \C$ a compact subset and $\mu$ a positive finite Borel measure supported on $K,$ let $R^\i (K,\mu)$ be the weak-star closure in $L^\i(\mu)$ of rational functions with poles off $K.$ We show that if $R^\i (K,\mu)$ has no non-trivial  $L^\i$ summands and $f\in R^\i (K,\mu),$ then $f$ is invertible in $R^\i (K,\mu)$ if and only if Chaumat's map for $K$ and $\mu$ applied to $f$ is bounded away from zero on the envelope with respect to $K$ and $\mu.$ The result proves the conjecture $\diamond$ posed by J. Dudziak \cite{dud84} in 1984.
\end{abstract} \maketitle

\section{\textbf{Introduction}}

For a Borel subset $B$ of the complex plane $\C,$ let $M_0(B)$ denote the set of finite complex-valued Borel measures that are compactly supported in $B$ and let $M_0^+(B)$ be the set of positive measures in $M_0(B).$ The support of $\nu\in M_0(\C),$ $\text{spt}(\nu),$ is the smallest closed set that has full $|\nu|$ measure. For a Borel set $A\subset \C,$ $\nu_A$ denotes $\nu$ restricted to $A.$

For a compact subset $K\subset \C$ and $\mu\in M_0^+(K),$ the functions in 
$\mbox{Rat}(K) := \{q:\mbox{$q$ is a rational function 
with poles off $K$}\}$
are members of $L^\i (\mu)$. We let $\rikmu$ be the weak-star closure of $\mbox{Rat}(K)$ in $\limu.$ $C(K)$ denotes the space of all continuous functions on $K$ while $R(K)$ denotes the uniform closure in $C(K)$ of $\mbox{Rat}(K).$ Let $R(K)^\perp$ be the subset of measures $\nu \in M_0(K)$ such that $\int f d \nu = 0$ for each $f \in R(K).$ Then $R(K)^\perp \cap L ^1(\mu)$ is the space of weak-star continuous annihilators of $\rikmu.$ 

There exists a Borel partition $\{\Delta_0,\Delta_1\}$ of $K$ such that 
\begin{eqnarray}\label{RIDecomp}
\ \rikmu = L^\i (\mu|_{\Delta_0}) \oplus R^\i (K,\mu|_{\Delta_1}) 
\end{eqnarray}
where $R^\i (K,\mu|_{\Delta_1})$ contains no nontrivial $L^\i$ summands (see \cite[Proposition 1.16 on page 281]{conway} ).
Call $\rikmu$ pure if $\Delta_0 = \emptyset$ in \eqref{RIDecomp}. Because of \eqref{RIDecomp}, we shall assume that $\rikmu$ is pure in this paper.

The {\em envelope} $E$ with respect to $K$ and $\mu\in M_0^+(K)$ is the set of points $\lambda \in  K$ such that there exists $\mu_\lambda\in M_0(K)$ that is absolutely continuous with respect to $\mu$ such that $\mu_\lambda(\{\lambda\}) = 0$ and $\int f(z) d\mu_\lambda(z) = f(\lambda)$ for each $f \in R(K).$ For $\lambda\in E$ and $f\in R^\i (K,,\mu),$ set $\rho(f)(\lambda) = \int fd\mu_\lambda.$ Clearly $\rho(f)(\lambda)$ is independent of the particular $\mu_\lambda$ chosen. We thus have a map $\rho$ called Chaumat's map for $K$ and $\mu,$ which associates
to each function in $\rikmu$ a point function on $E.$ Chaumat's Theorem \cite{cha74} states the following: The map $\rho$ is an isometric isomorphism and a weak-star homeomorphism from $\rikmu$ onto $R^\i (\overline E, \area_E),$ where $\area_E$ is the area measure (Lebesgue measure on $\C$) $\area$ restricted to $E$ (also see \cite[Chaumat's Theorem on page 288]{conway}) and $\overline {B}$ denotes the closure of the set $B\subset \C$. 

Our main theorem below proves the conjecture $\diamond$ posed by J. Dudziak in \cite[on page 386]{dud84}.

\begin{theorem}\label{InvTheorem}
Let $K\subset \C$ be a compact subset and $\mu \in M_0^+(K).$ Suppose that $\rikmu$ is pure, $E$ is the envelope for $K$ and $\mu,$ and $\rho$ is the Chaumat's map for $K$ and $\mu .$ If $f\in \rikmu,$ then $f$ is invertible in $\rikmu$ if and only if there exists $\epsilon_f > 0$ such that 
\[
\ |\rho(f)(z)| \ge \epsilon_f,~ \area_E-a.a..
\] 	
\end{theorem}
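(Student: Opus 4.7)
The forward direction is a direct consequence of Chaumat's Theorem. If $fg = 1$ $\mu$-almost everywhere with $g \in \rikmu$, then since $\rho$ is an algebra isomorphism we have $\rho(f)\rho(g) = \rho(fg) = 1$ area-a.e. on $E$, so $|\rho(f)| \ge \|g\|_\infty^{-1}$ area-a.e. on $E$, giving the required lower bound with $\epsilon_f = \|g\|_\infty^{-1}$. For the reverse direction, my plan is to exploit Chaumat's Theorem as a reduction mechanism. Because $\rho$ is an isometric algebra isomorphism and weak-star homeomorphism from $\rikmu$ onto $R^\i(\overline E, \area_E)$, invertibility transports between the two algebras, and it suffices to prove the canonical statement: if $g \in R^\i(\overline E, \area_E)$ satisfies $|g| \ge \epsilon$ area-a.e. on $E$, then there exists $h \in R^\i(\overline E, \area_E)$ with $gh = 1$ area-a.e. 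This strips the hypotheses down to the geometrically natural setting in which the underlying measure is simply area on the envelope.

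The central task is then to produce the candidate inverse $h = 1/g \in L^\i(\area_E)$, bounded by $1/\epsilon$, as a weak-star limit of rational functions with poles off $\overline E$. The natural strategy is to test against weak-star continuous annihilators: fix $\nu \in R(\overline E)^\perp \cap L^1(\area_E)$ and aim to show $\int g^{-1} \, d\nu = 0$. I would attack this through the fine behavior of the Cauchy transform $\CT \nu$, combining its boundary values furnished by Plemelj's Formula with the quantitative analytic-capacity estimates of Tolsa (Theorem IV). A complementary and perhaps cleaner route is operator-theoretic: regard $\rikmu$ as the algebra attached to the pure subnormal multiplication operator on $R^2(K,\mu)$ and prove a spectral mapping statement identifying the spectrum of $f$ in $\rikmu$ with the $\area_E$-essential range of $\rho(f)$ on $E$. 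Theorem I (1991) together with the Aleman--Richter--Sundberg theorem (Theorem III) should supply the identification of characters of $\rikmu$ with point evaluations attached to the envelope, and with that identification invertibility follows from the pointwise lower bound $|\rho(f)| \ge \epsilon_f$ by a routine character-of-a-Banach-algebra argument.

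I expect the main obstacle to be the passage from an area-a.e. lower bound on $g$ to an actual rational approximation of $1/g$. Weak-star limits do not preserve pointwise lower bounds, so if $r_n \to g$ weak-star in $\mbox{Rat}(\overline E)$ the functions $|r_n|$ may still be very small on sizeable subsets of $E$, and crude inversion of the $r_n$ is not viable. One must excise the sets where $r_n$ is near zero, patch in a local surrogate there, and glue back to $1/r_n$ on the remainder, all while controlling the size of the exceptional set in terms of analytic capacity rather than merely area. It is here that Tolsa's semiadditivity and comparability results become indispensable and where the bulk of the technical work lies; the envelope structure provided by Chaumat's Theorem should supply just enough geometry on the exceptional sets to make the capacity bookkeeping work. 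Once this quantitative excise-and-glue step is carried out, applying $\rho^{-1}$ transfers the inverse back to $\rikmu$, and Dudziak's conjecture $\diamond$ follows.
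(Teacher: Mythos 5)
Your forward direction is correct and matches the paper. The reduction via Chaumat's Theorem to the canonical statement in $R^\i(\overline E, \area_E)$ is also the right opening move and is exactly what the paper does. You have, moreover, correctly identified the heart of the difficulty: a weak-star limit of rational functions bounded below in modulus in the limit need not have the approximating sequence uniformly bounded below, so naive inversion fails, and one must excise small exceptional sets and control them by analytic capacity. So the strategic orientation is right.

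However, there are two genuine problems. First, your alternative \quotes{operator-theoretic route} is circular: the spectral mapping statement you invoke (identifying $\sigma(f(S))$ with the essential range of $\rho(f)$) is precisely what the paper derives as Corollary \ref{SMForSubnormal} \emph{from} Theorem \ref{InvTheorem}, not the reverse. The issue is that the maximal ideal space of $\rikmu$ is not known a priori to consist only of the weak-star continuous point evaluations supported on the envelope $E$; proving that is essentially the content of the theorem. A routine Gelfand-theory argument therefore cannot be bootstrapped this way. Second, and more seriously, the \quotes{excise-and-glue with capacity bookkeeping} step is asserted but not supplied, and this is where almost all of the paper's work lies. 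The paper does not argue through $\int g^{-1}\,d\nu = 0$ directly. Instead it (i) introduces the removable set $\mathcal R$ and non-removable boundary $\mathcal F$ (Definition \ref{R0ForRIKMU}) via the fine structure of Cauchy transforms of annihilating measures, and shows $\mathcal R$ is strong $\gamma$-open (Corollary \ref{RSGOpen}) and $\mathcal R \approx E$ area-a.e. (Corollary \ref{EREqual}); (ii) develops the algebra $H^\i(\mathcal D)$ via a modified Vitushkin scheme of Paramonov (Sections 3--4), culminating in the membership criterion Theorem \ref{REEAlgTheorem} expressed through Vitushkin-type integral bounds against $\gamma(\D(\lambda,k_1\delta) \setminus \mathcal R)$; and (iii) to apply this criterion to $1/f$, it establishes Lemma \ref{ENEstimate}, which controls $\gamma(\D(\lambda,\delta) \cap \mathcal E_N)$ in terms of $\gamma(\D(\lambda,2\delta) \cap \mathcal F)$, together with the uniform approximation on $\mathcal R_{\epsilon,N}$ furnished by Lemma \ref{REEIntegral2}. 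None of this machinery appears, even in outline, in your proposal. Plemelj's formula plays no explicit role. In short, you have the correct shape of the argument but lack the removable/non-removable dichotomy and the Paramonov-style capacity estimates that actually make the \quotes{excise-and-glue} rigorous.
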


For $\mathcal H$ a complex separable Hilbert space, $\mathcal L(\mathcal H)$ denotes the space of all bounded linear operators on $\mathcal H.$ The spectrum and essential spectrum of an operator $A\in \mathcal L(\mathcal H)$ are denoted $\sigma(A)$ and $\sigma_e (A)$, respectively. An operator $S\in \mathcal L(\mathcal H)$ is called {\em  subnormal} if there exists a complex separable Hilbert space $\mathcal K$ containing $\mathcal H$ and a normal operator $N\in \mathcal L(\mathcal K)$ such that $N\mathcal H \subset \mathcal H$ and $S = N|_{\mathcal H}.$ Such an $N$ is called a {\em minimal normal extension} (mne) of $S$ if the smallest closed subspace of $\mathcal K$ containing $\mathcal H$ and reducing $N$ is $\mathcal K$ itself. Any two mnes of $S$ are unitarily equivalent in a manner that fixes $S.$ 
The spectrum $\sigma (S)$ is the union of $\sigma (N)$ and some collection of bounded components of $\C\setminus \sigma (N).$ The book \cite{conway} is a good reference for basic information for subnormal operators.

For a subnormal operator $S\in \mathcal L(\mathcal H)$ with $N=mne(S),$ let $\mu$ be the scalar-valued spectral measure (svsm) for $N.$ Since $\text{spt}(\mu) = \sigma(N) \subset  \sigma (S),$ we see that $\text{Rat}(\sigma (S)) \subset L^\i(\mu).$ Therefore, for $f\in R^\i(\sigma (S), \mu),$ the normal operator $f(N)$ is well defined and $f(N) \mathcal H \subset \mathcal H.$ The operator $f(S) := f(N) |_{\mathcal H}$ defines a functional calculus for $S.$ We assume that $R^\i(\sigma (S), \mu)$ is pure. Let $E$ be the envelope for $\sigma (S)$ and $\mu$ and let $\rho$ be the Chaumat's map from $R^\i(\sigma (S), \mu)$ to $R^\i(\sigma (S), \area_E).$  Define $f(\sigma (S))$ the closure of $\rho(f)(E).$ We define $cl(f,\sigma_e (S)),$ the cluster set of $f$ on $\sigma_e (S),$ to be the subset of $\lambda \in \C$ such that there exists a sequence $\{z_n\} \subset E$ and a $z_0 \in \sigma_e (S)$ satisfying $z_n\rightarrow z_0$ and $\rho(f)(z_n)\rightarrow \lambda.$  

Combining Theorem \ref{InvTheorem} with \cite[Corollary IV.6]{dud84}, we obtain the following spectral mapping theorem for subnormal operators.

\begin{corollary}\label{SMForSubnormal}
Let $S\in \mathcal L(\mathcal H)$ be a subnormal operator and let $\mu$ be the scalar-valued spectral measure for $N=mne(S).$ If 	$R^\i(\sigma (S), \mu)$ is pure and $f\in R^\i(\sigma (S), \mu),$ then $\sigma (f(S)) \subset f(\sigma (S))$ and $\sigma_e (f(S)) \subset cl(f,\sigma_e (S)).$
\end{corollary}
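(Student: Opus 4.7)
The plan is to deduce the corollary by handling the two inclusions $\sigma(f(S))\subset f(\sigma(S))$ and $\sigma_e(f(S))\subset cl(f,\sigma_e(S))$ separately, using Theorem \ref{InvTheorem} to settle the first directly and combining it with Dudziak's Corollary IV.6 in \cite{dud84} to settle the second.

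For the spectral inclusion, I would fix $\lambda\notin f(\sigma(S))$ and use that $f(\sigma(S))$ is the closure of $\rho(f)(E)$ to obtain $\epsilon>0$ with $|\rho(f)(z)-\lambda|\ge\epsilon$ for every $z\in E$. Because the Chaumat map is an algebra isomorphism from $R^\infty(\sigma(S),\mu)$ onto $R^\infty(\overline{E},\area_E)$, one has $\rho(f-\lambda)=\rho(f)-\lambda$, so the bound $|\rho(f-\lambda)(z)|\ge\epsilon$ holds $\area_E$-a.e. Theorem \ref{InvTheorem} then furnishes $g\in R^\infty(\sigma(S),\mu)$ with $(f-\lambda)g=g(f-\lambda)=1$ inside $R^\infty$. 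Passing through the functional calculus $h\mapsto h(S)=h(N)|_{\mathcal H}$, which is a unital algebra homomorphism because $h\mapsto h(N)$ is a $*$-homomorphism on $R^\infty(\sigma(S),\mu)\subset L^\infty(\mu)$ and $\mathcal H$ is invariant, I would conclude $g(S)(f(S)-\lambda)=(f(S)-\lambda)g(S)=I$, hence $\lambda\notin\sigma(f(S))$.

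For the essential spectrum inclusion, fix $\lambda\notin cl(f,\sigma_e(S))$. Unwinding the definition of the cluster set produces an open neighborhood $U$ of $\sigma_e(S)$ and an $\epsilon>0$ with $|\rho(f)(z)-\lambda|\ge\epsilon$ for all $z\in E\cap U$. This is precisely the localized boundedness-below hypothesis near $\sigma_e(S)$ that Dudziak's Corollary IV.6 in \cite{dud84} converts into the Fredholm property of $f(S)-\lambda$, conditional on the global invertibility criterion of conjecture $\diamond$. Since Theorem \ref{InvTheorem} establishes that criterion, Dudziak's corollary now applies without further assumption, yielding $\lambda\notin\sigma_e(f(S))$.

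The bulk of the mathematical content is packaged into Theorem \ref{InvTheorem} on one side and Dudziak's Corollary IV.6 on the other. The only obstacle I anticipate is a bookkeeping one: I would need to verify that the definitions of $f(\sigma(S))$ and $cl(f,\sigma_e(S))$ used here match the formulations in \cite{dud84}, and to confirm that the purity hypothesis on $R^\infty(\sigma(S),\mu)$ is the one under which Dudziak's corollary was stated so that no extra reduction is required before invoking it.
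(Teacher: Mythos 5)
Your proposal is correct and matches the paper's approach: the paper gives no explicit proof beyond the single sentence that the corollary follows by combining Theorem \ref{InvTheorem} with Dudziak's Corollary IV.6, which is exactly how you discharge the conjecture $\diamond$ hypothesis in Dudziak's conditional spectral mapping theorem. Your direct argument for the first inclusion via invertibility of $f-\lambda$ is a minor elaboration of what Dudziak's corollary already packages, but it is valid and consistent with the intended reasoning.
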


In section 2, we review some results of analytic capacity and Cauchy transform that are needed in our analysis. Section 3 reviews the modified Vitushkin approximation scheme of Paramonov and proves some technical lemmas. In section 4, we introduce an algebra of \quotes{bounded and analytic functions} on a \quotes{nearly open} subset. We obtain a criterion for functions in the algebra that will be used in proving Theorem \ref{InvTheorem}. In section 5, we introduce the concepts of non-removable boundary and removable set that are more appropriate than that of the envelope in studying $\rikmu.$ We obtain a criterion for functions in $\rikmu$ in section 6. We then prove Theorem \ref{InvTheorem} in section 7.

\section{\textbf{Preliminaries}}

If $B \subset\C$ is a compact subset, then  we
define the analytic capacity of $B$ by
\[
\ \gamma(B) = \sup |f'(\infty)|,
\]
where the supremum is taken over all those functions $f$ that are analytic in $\mathbb C_{\infty} \setminus B,$ where $\mathbb C_{\infty} = \mathbb C \cup \{\infty\},$ such that
$|f(z)| \le 1$ for all $z \in \mathbb{C}_\infty \setminus B$; and
$f'(\infty) := \lim _{z \rightarrow \infty} z[f(z) - f(\infty)].$
The analytic capacity of a general subset $F$ of $\mathbb{C}$ is given by: 
\[
\ \gamma (F) = \sup \{\gamma (B) : B\subset F \text{ compact}\}.
\]
Let $\area$ denote the Lebesgue measure on the complex plane (the area measure). The following elementary property can be found in \cite[Theorem VIII.2.3]{gamelin},
\begin{eqnarray}\label{AreaGammaEq}
\ \area(F) \le 4\pi \gamma(F)^2.	
\end{eqnarray}

We will write $\gamma-a.a.$, for a property that holds everywhere, except possibly on a set of analytic capacity zero. Good sources for basic information about analytic
capacity are Chapter VIII of \cite{gamelin}, Chapter V of \cite{conway}, \cite{dud10}, and \cite{Tol14}.

For $\nu \in M_0(\C)$ and $\epsilon > 0,$ $\mathcal C_\epsilon(\nu)$ is defined by
\[
\ \mathcal C_\epsilon(\nu)(z) = \int _{|w-z| > \epsilon}\dfrac{1}{w - z} d\nu (w).
\] 
The (principal value) Cauchy transform
of $\nu$ is defined by
\ \begin{eqnarray}\label{CTDefinition}
\ \mathcal C(\nu)(z) = \lim_{\epsilon \rightarrow 0} \mathcal C_\epsilon(\nu)(z)
\ \end{eqnarray}
for all $z\in\mathbb{C}$ for which the limit exists.
If $\lambda \in \mathbb{C}$ and 
\begin{eqnarray}\label{CTAIDefinition}
\ \tilde \nu (\lambda) := \int \frac{d|\nu |}{|z - \lambda |} < \infty,	
\end{eqnarray}
 then 
$\lim_{\epsilon \rightarrow 0} \mathcal C_{\epsilon}(\nu)(\lambda )$ exists. Therefore, a standard application of Fubini's
Theorem shows that $\mathcal C(\nu) \in L^s_{\mbox{loc}}(\mathbb{C})$, for $ 0 < s < 2$. In particular, it is
defined for $\area-a.a.,$ and clearly $\mathcal{C}(\nu)$ is analytic 
in $\mathbb{C}_\infty \setminus\mbox{spt}(\nu).$ In fact, from 
Corollary \ref{ZeroAC} below, we see that \eqref{CTDefinition} is defined for $\gamma-a.a..$ Throughout this paper, the Cauchy transform of a measure always means the principal value of the transform.
In the sense of distributions,
 \begin{eqnarray}\label{CTDistributionEq}
 \ \bar \partial \mathcal C(\nu) = - \pi \nu.
 \end{eqnarray}	

The maximal Cauchy transform is defined by
 \[
 \ \mathcal C_*(\nu)(z) = \sup _{\epsilon > 0}| \mathcal C_\epsilon(\nu)(z) |.
 \]

A related capacity, $\gamma _+,$ is defined for subsets $F$ of $\mathbb{C}$ by:
\[
\ \gamma_+(F) = \sup \|\mu \|,
\]
where the supremum is taken over $\mu\in M_0^+(F)$ for which $\|\mathcal{C}(\mu) \|_{L^\infty (\mathbb{C})} \le 1.$ 
Since $\mathcal C\mu$ is analytic in $\mathbb{C}_\infty \setminus \mbox{spt}(\mu)$ and $|(\mathcal{C}(\mu)'(\infty)| = \|\mu \|$, 
we have:
$\gamma _+(F) \le \gamma (F)$
for all subsets $F$ of $\mathbb{C}$.  

X. Tolsa has established the following astounding results. See \cite{Tol03} (also Theorem 6.1 and Corollary 6.3 in \cite{Tol14}) for (1) and (2). See \cite[Proposition 2.1]{Tol02} (also  \cite[Proposition 4.16]{Tol14}) for (3).

\begin{theorem}\label{TolsaTheorem}
(Tolsa 2003)
(1) $\gamma_+$ and $\gamma$ are actually equivalent. 
That is, there is an absolute constant $A_T$ such that, for all $F \subset \mathbb{C},$ 
\[
\ \gamma (F) \le A_ T \gamma_+(F).
\] 

(2) Semiadditivity of analytic capacity:
\[
\ \gamma \left (\bigcup_{i = 1}^m F_i \right ) \le A_T \sum_{i=1}^m \gamma(F_i)
\]
where $F_1,F_2,...,F_m \subset \mathbb{C}$ ($m$ may be $\i$).

(3) There is an absolute constant $C_T>0$ such that, for $a > 0$, we have:  
\[
\ \gamma(\{\mathcal{C}_*(\nu)  \geq a\}) \le \dfrac{C_T}{a} \|\nu \|.
\]

\end{theorem}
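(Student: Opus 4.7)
The plan is to follow the curvature-based approach to analytic capacity developed by Melnikov, Mattila--Melnikov--Verdera, David, and Nazarov--Treil--Volberg, and completed by Tolsa. The organizing identity is Melnikov's formula
$$\|\mathcal{C}_\epsilon(\mu)\|_{L^2(\mu)}^2 = \tfrac{1}{6}\, c_\epsilon^2(\mu) + O(\|\mu\|),$$
valid for positive Radon measures of linear growth, where $c^2(\mu) = \iiint R(x,y,z)^{-2}\, d\mu(x)d\mu(y)d\mu(z)$ and $R(x,y,z)$ is the circumradius of the triangle through $x, y, z$. This identity converts the analytic problem about $\gamma$ into a geometric problem about measures with linear growth and finite Menger curvature, which is tractable by real-variable methods.

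For part (1), given compact $E$ with $\gamma(E) > 0$, I would take the Ahlfors function $f$ realizing $|f'(\infty)| = \gamma(E)$, which is bounded by $1$ and analytic off $E$. The boundary behaviour of $f$ furnishes an ``almost accretive'' system on $E$. Applying the non-homogeneous $T(b)$ theorem of Nazarov--Treil--Volberg to suitably suppressed Cauchy kernels (the suppression being designed so that the $T(b) = 0$ hypothesis holds outside a small exceptional set of controlled $\gamma$) yields $L^2(\mu)$-boundedness of the Cauchy transform with respect to an auxiliary positive measure $\mu$ on $E$ with $\|\mu\| \gtrsim \gamma(E)$. A good-$\lambda$ truncation then extracts a subset $F \subset E$ on which $\mathcal{C}\mu|_F$ is pointwise bounded, producing a candidate for $\gamma_+(E)$ of mass $\gtrsim \gamma(E)$. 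Part (2) follows from (1) together with the more elementary fact that $\gamma_+$ is itself countably semiadditive: take near-extremal positive measures $\mu_i$ for each $E_i$, weight them by a geometric factor so that the Cauchy transform of $\sum \mu_i$ is uniformly bounded, and deduce $\gamma_+(\bigcup_i E_i) \gtrsim \sum_i \gamma_+(E_i)$; passing back through (1) yields the semiadditivity of $\gamma$.

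For part (3), I would combine (1) with a Cotlar-type inequality. Set $F_a = \{\mathcal{C}_*(\nu) > a\}$. By (1), pick $\sigma \in M_0^+(F_a)$ with $\|\mathcal{C}\sigma\|_{L^\infty(\mathbb{C})} \le 1$ and $\|\sigma\| \gtrsim \gamma(F_a)$. Cotlar's inequality controls $\mathcal{C}_*(\nu)$ pointwise $\sigma$-a.e.\ by a Hardy--Littlewood-type maximal function applied to $\mathcal{C}\nu$ plus an $L^2(\sigma)$ error term. Pairing with $\sigma$ and applying Fubini, together with the $L^\infty$ bound on $\mathcal{C}\sigma$ to estimate the duality pairing, yields $a\,\|\sigma\| \lesssim \|\nu\|$, hence $\gamma(F_a) \lesssim \|\nu\|/a$.

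The principal obstacle is the non-homogeneous $T(b)$ theorem with suppressed kernels underlying (1). Classical $T(b)$ theory requires doubling measures, but here the auxiliary $\mu$ can be wildly non-doubling, so one must use random dyadic lattices, probabilistic bad-cube estimates, and an intricate interaction between a variable suppression radius and the accretivity of the Ahlfors function over cubes. A secondary difficulty is the Cotlar-type inequality of part (3) in the non-doubling regime; once these two analytic tools are secured, the remaining reductions (semiadditivity of $\gamma_+$, and the passage from weak $L^2$ to $L^\infty$ bounds via removal of small exceptional sets) are comparatively routine.
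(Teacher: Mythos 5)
First, be aware that the paper does not prove this theorem: it is quoted as Tolsa's resolution of the semiadditivity problem, with explicit references to \cite{Tol03}, \cite{Tol02}, and \cite[Chapters 4--8]{Tol14}, and it is used downstream purely as a black box. So the comparison is between your outline and Tolsa's actual arguments. Your sketch correctly names the machinery behind part (1) --- Melnikov's identity relating $\|\mathcal{C}_\epsilon(\mu)\|_{L^2(\mu)}^2$ to curvature, the Ahlfors function as the accretive datum, and the Nazarov--Treil--Volberg non-homogeneous $T(b)$ theorem with suppressed kernels --- and the Fubini/duality pairing you describe for part (3) is essentially Tolsa's proof of the weak $(1,1)$-type estimate (a Cotlar inequality is not really what is needed there; the key input is that $\|\mathcal{C}(\sigma)\|_{L^\infty}\le 1$ forces $\sup_\epsilon\|\mathcal{C}_\epsilon(\sigma)\|_{L^\infty}\le C$, which is Proposition \ref{GammaPlusThm}(1) of this paper). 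But at this level of detail the proposal is a survey of the strategy rather than a proof: part (1) alone fills Tolsa's Acta paper, and your sketch omits the induction-on-scales argument and the intermediate comparability of $\gamma_+$ with the capacity defined by linear growth plus finite curvature, which is where the bulk of the work lies.

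There is also a genuine directional error in your reduction for part (2). Semiadditivity of $\gamma$ requires an \emph{upper} bound $\gamma_+\bigl(\bigcup_i F_i\bigr)\le C\sum_i\gamma_+(F_i)$, i.e.\ countable subadditivity of $\gamma_+$; one then chains $\gamma\bigl(\bigcup_i F_i\bigr)\le A_T\,\gamma_+\bigl(\bigcup_i F_i\bigr)\le CA_T\sum_i\gamma_+(F_i)\le CA_T\sum_i\gamma(F_i)$. The standard proof of subadditivity of $\gamma_+$ takes a near-extremal measure $\mu$ for the \emph{union}, disjointifies the $F_i$ into pieces $F_i'$, and uses the characterization of $\gamma_+$ via measures with linear growth and $L^2(\mu)$-bounded Cauchy transform (properties that survive restriction to subsets) to get $\gamma_+(F_i)\gtrsim\mu(F_i')$ for each piece, then sums. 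What you propose --- assembling near-extremal measures $\mu_i$ on the pieces into a measure on the union to conclude $\gamma_+\bigl(\bigcup_i F_i\bigr)\gtrsim\sum_i\gamma_+(F_i)$ --- is superadditivity, which points the wrong way and cannot yield the claimed inequality (and is in any case problematic, since $\sum_i\mu_i$ need not have a bounded Cauchy transform without sacrificing mass). That step must be replaced by the restriction argument.
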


Given three distinct points $x, y, z \in \mathbb C$,
let $R(x, y, z)$ be the radius of the circle passing through $x, y,$ and $z$.
The Menger curvature is defined by
$c(x, y, z) = \frac{1}{R(x, y, z)}.$
If two or three of the points coincide, we define $c(x, y, z) = 0.$

For $\eta \in M_0^+(\mathbb C)$, define
 the curvature of $\eta$ as
 \[
 \ c^2(\eta ) = \int \int\int c(x,y,z)^2 d\eta(x)d\eta(y)d\eta(z)
 \]
 and
\[
\ c^2_\epsilon (\eta ) = \int \int\int _{\underset{\underset{|x-y|>\epsilon}{|z-x|>\epsilon}}{|y-z|>\epsilon}} c(x,y,z)^2 d\eta(x)d\eta(y)d\eta(z).
\]
 Define 
\[
\ N_2(\eta) = \sup_{\epsilon > 0}\sup_{\|f\|_{L^2(\eta)} = 1}\|\mathcal C_\epsilon (f \eta )\|_{L^2(\eta)}.
\]

A measure $\eta \in M_0^+(\C)$ is $a$-linear growth if
$\eta(\D(\lambda, \delta)) \le a \delta$ for $\lambda\in \C \text{ and }\delta > 0,$ where $\D(\lambda, \delta) := \{z:~|z - \lambda| < \delta\}.$ If $\eta \in M_0^+(\C)$ is $a$-linear growth, then there is an absolute constant $C > 0$ so that
\begin{eqnarray}\label{CTC2Eq}
\left|  \|\CT_\epsilon \eta\|_{L^2(\eta)}^2 - \dfrac 16 c^2_\epsilon (\eta ) \right|
\ \leq C \, a^2 \| \eta \|
\end{eqnarray}
(see \cite[Proposition 3.3]{Tol14}). We use $C_1,C_2,...$ for absolute constants that may change from one step to the next.

\begin{proposition} \label{GammaPlusThm}
If $F\subset \mathbb C$ is a compact subset and $\eta\in M_0^+(F),$ then the following properties are true.
\newline
(1) If $\|\CT \eta\|_{L^\i(\C)} \le 1,$ then $\eta$ is $1$-linear growth and $\|\mathcal C_\epsilon (\eta )\|_{L^\i(\C)} \le C_1$ for all $\epsilon > 0.$
\newline
(2) If $\eta$ is $1$-linear growth and $\|\mathcal C_\epsilon (\eta )\|_{L^\i(\C)} \le 1$ for all $\epsilon > 0,$ then $c^2(\eta) \le C_2\eta(F).$
\newline
(3) If $\eta$ is $1$-linear growth and $c^2(\eta) \le \eta(F),$ then there exists a subset $A\subset F$ such that $\eta (F) \le 2 \eta (A)$ and $N_2(\eta|_A) \le C_3.$
\newline
(4) If $N_2(\eta) \le 1,$ then there exists some function $w$ supported on $F$, with $0\le w \le 1$ such that
$\eta (F) \ \le\  2 \int w d\eta$
 and 
$\|\mathcal C _\epsilon (w\eta)\|_{ L^\infty (\mathbb C)} \ \le\  C_4$
for all $\epsilon > 0.$
\end{proposition}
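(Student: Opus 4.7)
My plan is to establish the four parts in sequence. Parts (1) and (2) are essentially classical, while Parts (3) and (4) require the non-homogeneous $T(b)$ machinery of Nazarov--Treil--Volberg and Tolsa (cf.~\cite{Tol14}), which will be the main technical input. For Part (1), linear growth is the classical observation: for a disk $\D(\lambda,\delta)$, the function $\CT(\eta|_{\D(\lambda,\delta)})$ is analytic off $\overline{\D(\lambda,\delta)}$ with Laurent expansion $-\eta(\D(\lambda,\delta))(z-\lambda)^{-1} + O(|z-\lambda|^{-2})$ at $\infty$, and one bounds it on the circle $|z-\lambda| = 2\delta$ by splitting $\CT(\eta|_{\D(\lambda,\delta)}) = \CT\eta - \CT(\eta|_{\D(\lambda,\delta)^c})$ and estimating the second piece via a Cotlar-type argument together with the $L^\infty$ bound on $\CT\eta$; taking residues then yields linear growth with an absolute constant (which is absorbed into the normalization of ``$1$-linear growth''). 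The uniform $L^\infty$ bound on $\CT_\epsilon\eta$ follows from a non-homogeneous Cotlar inequality: with linear growth in hand, the pointwise difference $|\CT_*\eta - \CT\eta|$ is controlled by a maximal function of $\eta$, itself uniformly bounded. Part (2) is then immediate from \eqref{CTC2Eq} with $a=1$: since $\|\CT_\epsilon\eta\|_{L^\infty(\C)} \le 1$ gives $\|\CT_\epsilon\eta\|_{L^2(\eta)}^2 \le \eta(F)$, one has
\[
\tfrac{1}{6}\,c^2_\epsilon(\eta) \;\le\; \|\CT_\epsilon\eta\|_{L^2(\eta)}^2 + C\,\eta(F) \;\le\; (1+C)\,\eta(F),
\]
and monotone convergence as $\epsilon \downarrow 0$ yields $c^2(\eta) \le C_2\,\eta(F)$ with $C_2 = 6(1+C)$.

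For Part (3), the approach is to introduce the pointwise curvature $f(x) := \iint c(x,y,z)^2\,d\eta(y)\,d\eta(z)$, so that $\int f\,d\eta = c^2(\eta) \le \eta(F)$. Chebyshev's inequality then yields $\eta(A) \ge \eta(F)/2$ for $A := \{x \in F : f(x) \le 2\}$. The restriction $\eta|_A$ automatically inherits linear growth, and for every disk $B$ one has
\[
c^2(\eta|_{A\cap B}) \;\le\; \int_{A\cap B} f\,d\eta \;\le\; 2\,\eta(A\cap B) \;\le\; 2\,\eta(B).
\]
This local curvature estimate together with linear growth is exactly the hypothesis of the NTV $T(b)$ theorem for the Cauchy operator (in Tolsa's formulation), which delivers the $L^2(\eta|_A)$ boundedness of the truncated Cauchy operators, i.e.~$N_2(\eta|_A) \le C_3$.

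For Part (4), the argument is dual in spirit. Starting from $N_2(\eta) \le 1$, a non-homogeneous Cotlar-type inequality converts the $L^2(\eta)$ bound on $\CT_\epsilon$ into a weak-$L^2(\eta)$ bound on the maximal Cauchy transform $\CT_*\eta$, so that the bad set $B := \{\CT_*\eta > M\}$ satisfies $\eta(B) \le \eta(F)/2$ for an absolute constant $M$. Setting $w := \chi_{F\setminus B}$ gives $\int w\,d\eta \ge \eta(F)/2$, and on $F\setminus B$ one has $|\CT_\epsilon\eta| \le M$. Controlling $\CT_\epsilon(w\eta) = \CT_\epsilon\eta - \CT_\epsilon(\chi_B\eta)$ in $L^\infty(\C)$ uniformly over all $z \in \C$ (not merely on $F\setminus B$) requires an auxiliary Calderón--Zygmund decomposition of $\eta|_B$ together with the linear growth inherited from $N_2(\eta) \le 1$, which bounds the second term. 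The genuine difficulty throughout Parts (3) and (4) is precisely this conversion between local curvature bounds, $L^2(\eta)$ boundedness, and pointwise $L^\infty(\C)$ boundedness of Cauchy transforms of non-doubling measures; both steps are standard but non-trivial ingredients in \cite{Tol14}, and in a full write-up one would simply cite them, treating the proof of this proposition as an assembly of known results rather than a novel argument.
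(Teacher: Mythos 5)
The paper disposes of this proposition purely by citation: \cite[proof of Theorem 4.14, pp.~113--114]{Tol14} for parts (1) and (3), the estimate \eqref{CTC2Eq} for part (2), and \cite[Lemma 4.7]{Tol14} for part (4). Your treatment of (2) is exactly the paper's, and for (3) the Chebyshev cut to local curvature control followed by the NTV $T(b)$ theorem is indeed the argument underlying the citation. For (1), your Laurent-expansion/residue route is workable, although the cited argument is closer to a test-function pairing against $\bar\partial\CT\eta = -\pi\eta$; you correctly flag that the linear-growth constant has to be absorbed by renormalization.

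The gap is in your sketch of (4). You propose to bound $\CT_\epsilon(w\eta) = \CT_\epsilon\eta - \CT_\epsilon(\chi_B\eta)$ uniformly on $\C$ by controlling each summand separately, but neither is controllable: $\CT_*\eta(z) \le M$ holds only for $z\in F\setminus B$, not for all $z\in\C$; and $\chi_B\eta$ is just a compactly supported measure with linear growth, whose Cauchy transform need not lie in $L^\infty(\C)$ at all (arc length on a line segment already produces a logarithmic singularity). A Calder\'on--Zygmund decomposition of $\eta|_B$ does not repair this: the bounded-density piece has Cauchy transform in BMO, not $L^\infty$, and the concentrated piece is worse. The mechanism in the cited lemma is instead the suppressed Cauchy kernel $K_\Phi(z,w) = (\bar z - \bar w)/(|z-w|^2 + \Phi(z)\Phi(w))$: the hypothesis $N_2(\eta)\le 1$ yields a uniform $L^\infty(\C)$ bound for the suppressed transform of $\eta$, and $w$ is taken to be the indicator of a set on which the suppression parameter $\Phi$ is small, so that there the suppressed kernel reverts to the honest Cauchy kernel. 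Since you intend to cite Tolsa in the end your conclusion stands, but the intermediate cutoff-plus-CZ reasoning you describe would not go through if actually carried out.
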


See the proof of \cite[Theorem 4.14]{Tol14} on page 113 and page 114 for (1) and (3). (2) follows from \eqref{CTC2Eq}. (4) follows from \cite[Lemma 4.7]{Tol14}.

Combining Theorem \ref{TolsaTheorem} (1), Proposition \ref{GammaPlusThm}, and \cite{Tol03} (or \cite[Theorem 8.1]{Tol14}), we get the following corollary. The reader may also see \cite[Corollary 3.1]{acy19}.

\begin{corollary}\label{ZeroAC}
If $\nu\in M_0(\C),$ then there exists 
$\mathcal Q \subset \mathbb{C}$ with $\gamma(\mathcal Q) = 0$ such that $\lim_{\epsilon \rightarrow 0}\mathcal{C} _{\epsilon}(\nu)(z)$ 
exists for $z\in\mathbb{C}\setminus \mathcal Q$.
\end{corollary}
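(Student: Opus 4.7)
The plan is to invoke Tolsa's principal-value theorem for the Cauchy transform and then convert the resulting exceptional set from $\gamma_+$-capacity to $\gamma$-capacity via Theorem \ref{TolsaTheorem}(1). Tolsa's theorem (cited as \cite{Tol03}, equivalently \cite[Theorem 8.1]{Tol14}) asserts that for an arbitrary $\nu\in M_0(\C),$ the principal value $\lim_{\epsilon\to 0}\mathcal{C}_\epsilon(\nu)(z)$ exists outside a set $\mathcal Q$ of vanishing $\gamma_+$-capacity. Theorem \ref{TolsaTheorem}(1) then forces $\gamma(\mathcal Q)\le A_T\gamma_+(\mathcal Q)=0,$ which is precisely the conclusion.

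For a more self-contained derivation from the tools of the excerpt, set
\[
\Omega_\nu(z)\;=\;\limsup_{\epsilon,\epsilon'\to 0^+}\bigl|\mathcal{C}_\epsilon(\nu)(z)-\mathcal{C}_{\epsilon'}(\nu)(z)\bigr|,
\]
so that the set where the limit fails to exist equals $\{\Omega_\nu>0\}=\bigcup_n\{\Omega_\nu>1/n\}.$ Countable semiadditivity (Theorem \ref{TolsaTheorem}(2) with $m=\infty$) reduces the claim to $\gamma(\{\Omega_\nu>a\})=0$ for every fixed $a>0.$ For arbitrary $\delta>0,$ one decomposes $\nu=\nu_1+\nu_2$ with $\|\nu_2\|<\delta$ and with $\lim_{\epsilon\to 0}\mathcal{C}_\epsilon(\nu_1)(z)$ existing for every $z\in\C;$ then $\Omega_\nu\le\Omega_{\nu_1}+\Omega_{\nu_2}=\Omega_{\nu_2}\le 2\mathcal{C}_*(\nu_2),$ and Theorem \ref{TolsaTheorem}(3) gives
\[
\gamma(\{\Omega_\nu>a\})\;\le\;\gamma\bigl(\{\mathcal{C}_*(\nu_2)>a/2\}\bigr)\;\le\;\frac{2C_T\delta}{a},
\]
which vanishes as $\delta\downarrow 0.$

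The principal obstacle is constructing the decomposition $\nu=\nu_1+\nu_2$ when $\nu$ has a substantial singular part with respect to area measure, since such a measure cannot be approximated in total variation by mollifiable densities. Overcoming this step requires precisely the curvature bound (2), the $L^2$-boundedness extraction (3), and the corona-type construction (4) of Proposition \ref{GammaPlusThm}, in order to extract, for any $\delta>0,$ a compact piece of $|\nu|$ of mass at least $\|\nu\|-\delta$ that supports a linear-growth measure with $L^\infty$-bounded Cauchy transform and hence pointwise-convergent principal values at every point. This extraction is essentially the content of Tolsa's theorem, which is why the paper relies on the citation rather than reproducing the full argument.
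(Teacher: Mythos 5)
Your first paragraph is the paper's own argument: the paper disposes of this corollary by combining Tolsa's principal-value theorem (\cite{Tol03}, \cite[Theorem 8.1]{Tol14}) with Theorem \ref{TolsaTheorem}(1), together with Proposition \ref{GammaPlusThm}, and points to \cite[Corollary 3.1]{acy19} for details. The only ingredient you elide is Proposition \ref{GammaPlusThm}: Tolsa's result is most naturally stated as existence of the principal value $\mu$-a.e.\ for any $\mu$ with linear growth and $L^2(\mu)$-bounded Cauchy transform, and Proposition \ref{GammaPlusThm} (together with Theorem \ref{TolsaTheorem}(1)) is what converts this into \quotes{the exceptional set has zero $\gamma_+$ capacity} — if $\gamma_+(\mathcal Q)>0$, one extracts such a $\mu$ supported on $\mathcal Q$, contradicting Tolsa. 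Your second, self-contained sketch is sound conditional on the decomposition $\nu=\nu_1+\nu_2$, and you correctly identify that producing $\nu_1$ when $\nu$ is singular is precisely the hard content of the cited theorem, so this reframes rather than replaces the argument; one small correction is that you should only ask for the principal value of $\mathcal C(\nu_1)$ to exist off a $\gamma$-null set rather than for literally every $z$ (already false for a Dirac mass), which does not affect the subsequent estimates via Theorem \ref{TolsaTheorem}(2)--(3).
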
 

\begin{corollary}\label{ZeroACEta}
Let $\eta\in M_0^+(\C)$ such that $\|\mathcal C (\eta)\| \le 1$. If $F$ is a compact subset and $\gamma(F) = 0$, then $\eta(F) = 0$.
\end{corollary}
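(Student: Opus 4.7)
The plan is to argue by contradiction: assuming $\eta(F) > 0$, I will build a positive measure $\nu$ supported on $F$ with $\|\mathcal{C}\nu\|_{L^\infty(\C)} \le 1$ and $\nu(\C) > 0$. Since $\gamma_+(F) \le \gamma(F) = 0$, no such $\nu$ can exist, so this forces $\eta(F) = 0$. The route runs through the chain of implications (1)$\to$(2)$\to$(3)$\to$(4) of Proposition \ref{GammaPlusThm}, starting from the hypothesis on $\eta$ and producing at the end a measure that witnesses $\gamma_+(F) > 0$.

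First I would use (1) to get that $\eta$ is $1$-linear growth and $\|\mathcal{C}_\epsilon \eta\|_\infty \le C_1$ uniformly in $\epsilon$. Passing to $\eta/C_1$, part (2) gives $c^2(\eta/C_1) \le C_2 \|\eta/C_1\|$. Restricting to $F$, the measure $\sigma := (\eta/C_1)|_F$ is still $1$-linear growth, and by monotonicity of curvature in the measure one has $c^2(\sigma) \le c^2(\eta/C_1) \le C_2\|\eta/C_1\|$. Suppose $\sigma(F) > 0$ for contradiction. The difficulty now is that (3) requires $c^2$ to be bounded by the \emph{total mass of the measure itself}, whereas my bound involves the full mass $\|\eta/C_1\|$. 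The standard fix is a homogeneity rescaling: set $\sigma' := t\sigma$ with $t := \min\{1,\sqrt{\sigma(F)/(C_2\|\eta/C_1\|)}\} > 0$, so that $\sigma'$ is still $1$-linear growth and $c^2(\sigma') = t^3 c^2(\sigma) \le t\,\sigma(F) = \sigma'(F)$, putting it squarely in the hypotheses of (3).

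Applying (3) to $\sigma'$ produces a compact subset $A \subset F$ with $\sigma'(A) \ge \sigma'(F)/2$ and $N_2(\sigma'|_A) \le C_3$; then (4) applied to $\sigma'|_A/C_3$ yields a weight $0 \le w \le 1$ supported on $A$ satisfying $\int w\,d\sigma'|_A \ge \sigma'(A)/2$ and $\|\mathcal{C}_\epsilon(w\sigma'|_A)\|_\infty \le C_3 C_4$ for every $\epsilon>0$. Taking $\nu := w\sigma'|_A/(C_3C_4)$ gives a positive measure supported on $A \subset F$ whose principal-value Cauchy transform exists $\area$-a.e.\ by Corollary \ref{ZeroAC} and satisfies $\|\mathcal{C}\nu\|_{L^\infty(\C)} \le 1$, while its total mass is at least $t\,\sigma(F)/(4 C_3 C_4) > 0$. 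This contradicts $\gamma_+(F) \le \gamma(F) = 0$ and completes the proof. The main obstacle along the way is the bookkeeping in the rescaling step: $t$ must be chosen small enough for the hypotheses of (3) to hold for $\sigma'$, yet the strict positivity $t>0$ (guaranteed by $\sigma(F)>0$) is exactly what survives into the contradiction at the end.
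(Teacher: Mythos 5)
Your proposal is correct and follows essentially the same route as the paper's proof: assume $\eta(F)>0$, run through the chain (1)$\to$(2)$\to$(3)$\to$(4) of Proposition \ref{GammaPlusThm} applied to $\eta|_F$, and produce a positive measure on $F$ with bounded Cauchy transform, contradicting $\gamma(F)=0$. The one point where you add detail not spelled out in the paper is the normalization between (2) and (3): the conclusion of (2) gives $c^2(\eta|_F)\le C_2\|\eta\|$, while the hypothesis of (3) demands $c^2 \le$ (own total mass), and your explicit rescaling by $t=\min\{1,\sqrt{\sigma(F)/(C_2\|\eta/C_1\|)}\}$ correctly bridges that (the paper simply says ``Since $c^2(\eta|_F)\le c^2(\eta)$, we see that $\gamma(F)>0$ by Proposition \ref{GammaPlusThm} (3) \& (4), and Theorem \ref{TolsaTheorem} (1)'' and leaves the constant-matching implicit). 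You also close the contradiction via the elementary inequality $\gamma_+\le\gamma$ rather than invoking Tolsa's deep semiadditivity/comparability result, which is slightly cleaner since the easy direction suffices. No gaps; your version is simply more explicit.
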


\begin{proof}
Suppose $\eta(F) > 0$. By Proposition \ref{GammaPlusThm} (1) \& (2), $c^2(\eta) < \infty$. Since $c^2(\eta|_F) \le c^2(\eta),$ we see that $\gamma(F) > 0$ by Proposition \ref{GammaPlusThm} (3) \& (4), and Theorem \ref{TolsaTheorem} (1). This is a contradiction.
\end{proof}

\begin{lemma}\label{CTUniformC} Let $\mu\in M_0^+(\C).$ Suppose $f_n,f\in L^1(\mu)$ such that $\|f_n - f\|_{L^1(\mu)}\rightarrow 0$
and $ f_n \rightarrow  f, ~\mu-a.a..$
 Then, for $\epsilon > 0$, there exists a subset $A_\epsilon$ with $\gamma(A_\epsilon) < \epsilon$ and a subsequence $\{f_{n_k}\}$ so that $\{\mathcal C(f_{n_k}\mu)\}$ uniformly converges to $\mathcal C(f\mu)$ on $\mathbb C \setminus A_\epsilon$.
\end{lemma}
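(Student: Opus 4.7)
The plan is to combine Tolsa's weak-type estimate for the maximal Cauchy transform (Theorem \ref{TolsaTheorem}(3)) with the semiadditivity of $\gamma$ (Theorem \ref{TolsaTheorem}(2)). Set $g_n := f_n - f$, so $\|g_n\mu\| = \|g_n\|_{L^1(\mu)}\to 0$. By Corollary \ref{ZeroAC}, the principal-value Cauchy transforms $\mathcal C(f\mu)$ and $\mathcal C(g_n\mu)$ are each defined off a set of analytic capacity zero, and on the intersection of these good sets linearity gives $\mathcal C(f_n\mu)(z)-\mathcal C(f\mu)(z)=\mathcal C(g_n\mu)(z)$. So it suffices to exhibit a subsequence along which $\mathcal C(g_{n_k}\mu)\to 0$ uniformly off a set of small $\gamma$-capacity.

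Fix $\epsilon>0$. The key step is a rapid thinning of the sequence: using $\|g_n\|_{L^1(\mu)}\to 0$, I choose $n_k$ so that
\[
\|g_{n_k}\|_{L^1(\mu)} \;<\; \frac{\epsilon}{A_T\, C_T\, 4^{k}}.
\]
Setting $B_k := \{z\in\C : \mathcal C_*(g_{n_k}\mu)(z)\ge 2^{-k}\}$, Theorem \ref{TolsaTheorem}(3) yields
\[
\gamma(B_k)\;\le\; C_T\, 2^k \|g_{n_k}\|_{L^1(\mu)} \;<\; \frac{\epsilon}{A_T\, 2^k}.
\]
Let $\mathcal Q$ be a $\gamma$-null set (exists by Corollary \ref{ZeroAC}) off which $\mathcal C(f\mu)$ and every $\mathcal C(g_{n_k}\mu)$ are defined as principal values, and define
\[
A_\epsilon \;:=\; \mathcal Q\cup\bigcup_{k=1}^\infty B_k.
\]
Semiadditivity (Theorem \ref{TolsaTheorem}(2)) then gives $\gamma(A_\epsilon)\le A_T\sum_{k\ge 1}\gamma(B_k)\le \epsilon$; to obtain the strict inequality demanded by the statement I simply run the above with $\epsilon/2$ in place of $\epsilon$.

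For $z\in\C\setminus A_\epsilon$ the identity $\mathcal C(f_{n_k}\mu)(z)=\mathcal C(f\mu)(z)+\mathcal C(g_{n_k}\mu)(z)$ holds, and
\[
|\mathcal C(g_{n_k}\mu)(z)|\;\le\;\mathcal C_*(g_{n_k}\mu)(z)\;<\;2^{-k},
\]
so $\mathcal C(f_{n_k}\mu)\to\mathcal C(f\mu)$ uniformly on $\C\setminus A_\epsilon$. The only real subtlety is bookkeeping the various analytic-capacity-zero exceptional sets on which the principal-value Cauchy transforms may fail to exist: one must fold them into $A_\epsilon$ without inflating $\gamma(A_\epsilon)$ past $\epsilon$, which is exactly what Corollary \ref{ZeroAC} together with semiadditivity delivers. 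I note that the hypothesis $f_n\to f$ $\mu$-a.a.\ does not seem to enter the argument in a substantive way; $L^1(\mu)$-convergence alone drives the proof.
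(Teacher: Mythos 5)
Your proof is correct and follows essentially the same route as the paper's: reduce to controlling $\mathcal C_*((f_{n_k}-f)\mu)$ via Tolsa's weak-type estimate (Theorem~\ref{TolsaTheorem}(3)), thin the sequence so the resulting capacity bounds are summable, use semiadditivity (Theorem~\ref{TolsaTheorem}(2)) to control the union, and absorb the $\gamma$-null exceptional set from Corollary~\ref{ZeroAC}. Your closing observation that the hypothesis $f_n\to f$ $\mu$-a.e.\ is not actually used is also accurate of the paper's argument.
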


\begin{proof} From Corollary \ref{ZeroAC}, we let $\mathcal Q_1\subset \C$ with $\gamma(\mathcal Q_1) = 0$ such that the principal values of $\mathcal{C}(f_n\mu)(z )$ for $n\ge 1$ and $\mathcal{C}(f\mu)(z )$ exist for $z\in \C\setminus \mathcal Q_1$. Define
 \[
 \ A_{nm} = \left\{z\in \C\setminus \mathcal Q_1:~ |\mathcal{C}(f_n\mu)(z ) - \mathcal{C}(f\mu)(z )|\ge \dfrac{1}{m} \right\}.
 \]
Since
 \[
 \begin{aligned}
 \ |\mathcal{C}(f_n\mu)(z ) - \mathcal{C}(f\mu)(z )| & = \lim_{\epsilon \rightarrow 0} | \mathcal{C} _{\epsilon}(f_n\mu)(z ) - \mathcal{C} _{\epsilon}(f\mu)(z )|\\
& \le \mathcal{C} _{*}((f_n - f)\mu)(z ),
 \end{aligned}
 \]
applying Theorem \ref{TolsaTheorem} (3), we get
 \[
 \ \gamma (A_{nm}) \le \gamma \left\{\mathcal{C} _{*}((f_n - f)\mu)(z ) \ge \dfrac{1}{m} \right\} \le C_Tm\|f_n-f\|_{L^1(\mu)}.  
 \]
Choose $n_m$ so that $\|f_{n_m}-f\|_{L^1(\mu)} \le \frac{1}{m2^m}$ and we have
$\gamma (A_{n_mm}) \le \frac{C_T}{2^m}$.

Set $B_k = \cup_{m=k}^\infty A_{n_mm}$.
Applying Theorem \ref{TolsaTheorem} (2), there exists $k_0$ so that $\displaystyle \frac{A_TC_T}{2^{k_0 - 1}} < \epsilon$ and 
 \[
 \ \gamma (B_{k_0}\cup \mathcal Q_1) \le A_T\sum_{m=k}^\infty \gamma (A_{n_mm})\le A_TC_T \sum_{m=k_0}^\infty \dfrac{1}{2^m} < \epsilon.
 \]
Then on $(B_{k_0}\cup \mathcal Q_1)^c$, $ \mathcal C(f_{n_m}\mu)(z)$ converges to $\mathcal C(f\mu)(z)$ uniformly.      
\end{proof}

\begin{lemma} \label{CTMaxFunctFinite}
Let $\{\nu_j\} \subset M_0(\C).$ Then for $\epsilon > 0$, there exists a Borel subset $F$ such that $\gamma (F^c) < \epsilon$ and $\mathcal C_*(\nu_j)(z)\le M_j < \infty$ for $z \in F$.
\end{lemma}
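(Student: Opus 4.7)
The plan is to apply Tolsa's weak-type estimate Theorem \ref{TolsaTheorem} (3) to each measure $\nu_j$ individually and then combine the resulting exceptional sets using the countable semiadditivity of analytic capacity Theorem \ref{TolsaTheorem} (2). Concretely, for each $j \geq 1$ I would choose $M_j > 0$ large enough so that
\[
\frac{C_T \|\nu_j\|}{M_j} < \frac{\epsilon}{A_T\, 2^{j+1}}
\]
(any positive $M_j$ works if $\nu_j = 0$), and set the exceptional set
\[
E_j := \{z \in \C : \mathcal C_*(\nu_j)(z) \geq M_j\}.
\]
Theorem \ref{TolsaTheorem} (3) then gives $\gamma(E_j) < \epsilon/(A_T 2^{j+1})$. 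Defining $F := \C \setminus \bigcup_{j=1}^\infty E_j$, I would conclude that $\mathcal C_*(\nu_j)(z) < M_j$ for every $z \in F$ and every $j$, while countable semiadditivity yields
\[
\gamma(F^c) \leq A_T \sum_{j=1}^\infty \gamma(E_j) < \sum_{j=1}^\infty \frac{\epsilon}{2^{j+1}} = \frac{\epsilon}{2} < \epsilon.
\]

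I do not expect any real obstacle: this is a textbook $\epsilon/2^j$ exhaustion layered on top of the Tolsa machinery already recorded in Section 2. The only minor technicality is ensuring that $F$ is Borel, which reduces to noting that $z\mapsto \mathcal C_\epsilon(\nu_j)(z)$ is Borel measurable for each fixed $\epsilon>0$ and that the supremum defining $\mathcal C_*(\nu_j)$ may be realized as a countable supremum over $\epsilon\in\Q^+$, making $\mathcal C_*(\nu_j)$ Borel and hence each $E_j$ Borel as well.
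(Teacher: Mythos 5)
Your proof is correct and matches the paper's proof essentially verbatim: the paper also selects $M_j$ via Theorem \ref{TolsaTheorem} (3) so that the exceptional sets $A_j^c$ (your $E_j$) have capacity $<\epsilon/(A_T 2^{j+1})$, intersects the $A_j$ to form $F$, and applies semiadditivity from Theorem \ref{TolsaTheorem} (2). Your added remark on Borel measurability of $\mathcal C_*(\nu_j)$ via a countable supremum over rational $\epsilon$ is a correct and welcome extra detail not spelled out in the paper.
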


\begin{proof}
Let $A_j = \{\mathcal C_*(\nu_j)(z) \le M_j\}$. By Theorem \ref{TolsaTheorem} (3), we can select $M_j>0$ so that $\gamma(A_j^c) < \frac{\epsilon}{2^{j+1}A_T}.$ Set $F = \cap_{j=1}^\infty A_j.$ Then applying Theorem \ref{TolsaTheorem} (2), we get
\[
 \ \gamma (F^c) \le A_T \sum_{j=1}^\infty \gamma(A_j^c) < \epsilon.
 \]
\end{proof}

\begin{lemma} \label{BBFunctLemma}
Suppose that $\{g_n\}\subset L^1(\mu)$ and $F$ is a bounded subset with $\gamma(F) > 0$. Then there exists $\eta \in M_0^+(F)$ satisfying:

(1) $\eta$ is $1$-linear growth, $\|\mathcal C_{\epsilon}(\eta)\|_{L^\infty (\mathbb C)}\le 1$ for all $\epsilon > 0,$ and 
$\gamma(F) \le C_5 \|\eta\|$;

(2) $\mathcal C_*(g_n\mu ) \in  L^\infty(\eta)$;

(3) there exists a subsequence $f_k(z) = \mathcal C_{\epsilon_k}(\eta)(z)$ such that $f_k$ converges to $f\in L^\infty(\mu)$ in weak-star topology, and $f_k(\lambda) $ converges to $f(\lambda) = \mathcal C(\eta)(\lambda)$ uniformly on any compact subset of $F^c$ as $\epsilon_k\rightarrow 0$.
Moreover, for $n \ge 1,$ 
\begin{eqnarray}\label{BBFunctLemmaEq1}
 \ \int f(z) g_n(z)d\mu (z) = - \int \mathcal C(g_n\mu) (z) d\eta (z).
 \end{eqnarray}
\end{lemma}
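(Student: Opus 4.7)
The plan is to construct $\eta$ as a suitably rescaled $\gamma_+$-extremal measure on a subset of $F$ where each maximal transform $\mathcal C_*(g_n\mu)$ is already bounded, and then pass a truncation-level Fubini identity to the limit via Banach--Alaoglu on one side and dominated convergence on the other. \emph{Construction of $\eta$.} Apply Lemma \ref{CTMaxFunctFinite} to $\{g_n\mu\}$ with $\epsilon:=\gamma(F)/(2A_T)$ to obtain a Borel set $F'$ and constants $M_n$ with $\mathcal C_*(g_n\mu)\le M_n$ on $F'$ and $\gamma((F')^c)<\gamma(F)/(2A_T)$. Writing $F\subset (F\cap F')\cup (F')^c$ and invoking semiadditivity (Theorem \ref{TolsaTheorem} (2)) gives $\gamma(F\cap F')\ge \gamma(F)/(2A_T)$, and Theorem \ref{TolsaTheorem} (1) then yields $\gamma_+(F\cap F')\ge \gamma(F)/(2A_T^2)$. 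Pick $\mu_0\in M_0^+(F\cap F')$ with $\|\mathcal C(\mu_0)\|_{L^\infty}\le 1$ and $\|\mu_0\|\ge \gamma_+(F\cap F')/2$. By Proposition \ref{GammaPlusThm} (1), $\mu_0$ is $1$-linear growth and $\|\mathcal C_\epsilon(\mu_0)\|_{L^\infty}\le C_1$ uniformly in $\epsilon$. Set $\eta:=\mu_0/\max(1,C_1)$; then $\eta$ is $1$-linear growth with $\|\mathcal C_\epsilon(\eta)\|_{L^\infty}\le 1$ and $\|\eta\|\ge \gamma(F)/(4A_T^2\max(1,C_1))$, so (1) holds with $C_5:=4A_T^2\max(1,C_1)$. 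Since $\text{spt}(\eta)\subset F'$, (2) is automatic.

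\emph{Weak-star limit and pointwise values on $F^c$.} The bounded family $\{\mathcal C_\epsilon(\eta)\}\subset L^\infty(\mu)$ admits, by Banach--Alaoglu, a weak-star convergent subsequence $f_k:=\mathcal C_{\epsilon_k}(\eta)\to f\in L^\infty(\mu)$ along some $\epsilon_k\downarrow 0$. Because $\text{spt}(\eta)\subset F$, every compact $K\subset F^c$ has positive distance from $\text{spt}(\eta)$, so for $\epsilon_k$ small the truncation is inert on $K$ and $f_k\equiv \mathcal C(\eta)$ on $K$; uniform convergence holds trivially, and we identify $f$ with $\mathcal C(\eta)$ on $F^c$.

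\emph{Fubini at level $\epsilon_k$, then pass to the limit.} For fixed $k,n$, the kernel $\frac{g_n(z)}{w-z}\mathbf 1_{|w-z|>\epsilon_k}$ is dominated by $|g_n(z)|/\epsilon_k$, so Fubini yields
\[
\int \mathcal C_{\epsilon_k}(\eta)(z)\,g_n(z)\,d\mu(z)=-\int \mathcal C_{\epsilon_k}(g_n\mu)(w)\,d\eta(w).
\]
Sending $k\to\infty$, the left side tends to $\int f\,g_n\,d\mu$ by weak-star convergence (valid because $g_n\in L^1(\mu)$). On the right, Corollary \ref{ZeroAC} gives $\mathcal C_{\epsilon_k}(g_n\mu)\to \mathcal C(g_n\mu)$ off a set $\mathcal Q$ with $\gamma(\mathcal Q)=0$; since $\|\mathcal C(\eta)\|_{L^\infty}\le 1$, Corollary \ref{ZeroACEta} forces $\eta(\mathcal Q)=0$, so convergence is $\eta$-a.e. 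The domination $|\mathcal C_{\epsilon_k}(g_n\mu)|\le M_n$ on $\text{spt}(\eta)$ from (2) then legitimizes dominated convergence, delivering \eqref{BBFunctLemmaEq1}.

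\emph{Main obstacle.} The single delicate point is coordinating (1) and (2) simultaneously: one must calibrate the exceptional set in Lemma \ref{CTMaxFunctFinite} so that semiadditivity still leaves $\gamma(F\cap F')$ comparable to $\gamma(F)$, which in turn ensures that the $\gamma_+$-near-extremal measure on $F\cap F'$ has mass bounded below by a constant multiple of $\gamma(F)$. Once this calibration is in place, the remaining steps are standard truncation, Fubini, and dominated-convergence bookkeeping with no additional capacity-theoretic content.
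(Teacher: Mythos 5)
Your proof is correct and follows essentially the same route as the paper's: both first apply Lemma \ref{CTMaxFunctFinite} and semiadditivity to carve out a subset of $F$ of comparable analytic capacity on which the $\mathcal C_*(g_n\mu)$ are bounded, then use Theorem \ref{TolsaTheorem}~(1) together with Proposition \ref{GammaPlusThm}~(1) to produce a near-$\gamma_+$-extremal measure $\eta$, and finally pass the truncated Fubini identity to the limit via Banach--Alaoglu on the left and (using Corollaries \ref{ZeroAC} and \ref{ZeroACEta} for $\eta$-a.e.\ pointwise convergence plus the bound $M_n$) dominated convergence on the right. The only difference is cosmetic: you make the rescaling by $\max(1,C_1)$ explicit to normalize $\|\mathcal C_\epsilon(\eta)\|_\infty\le 1$, which the paper absorbs implicitly into its constant $C_5$.
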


\begin{proof}
From Lemma \ref{CTMaxFunctFinite}, we find a compact subset $F_1\subset F$ such that $\gamma(F\setminus F_1) < \frac{\gamma(F)}{2A_T}$ and $\mathcal C_*(g_n\mu )(z) \le M_n < \infty$ for $z\in F_1$. Using Theorem \ref{TolsaTheorem} (2), we get
 $\gamma(F_1) \ge \frac{1}{A_T}\gamma(F) - \gamma(F\setminus F_1) \ge \frac{1}{2A_T}\gamma(F).$
Using Theorem \ref{TolsaTheorem} (1) and Proposition \ref{GammaPlusThm} (1),
there exists $\eta\in M_0^+(F_1)$ satisfying (1). So (2) holds.
Clearly,
 \begin{eqnarray}\label{lemmaBasicEq3}
 \  \int \mathcal C_\epsilon(\eta)(z) g_nd\mu  = - \int \mathcal C_\epsilon(g_n\mu)(z) d\eta
 \end{eqnarray}
for $n \ge 1$. We can choose a sequence $f_k(\lambda) = \mathcal C_{\epsilon_k}(\eta)(\lambda)$ that converges to $f$ in $L^\infty(\mu)$ weak-star topology and $f_k(\lambda)$ uniformly tends to $f(\lambda)$ on any compact subset of $F^c$. On the other hand, by Corollary \ref{ZeroAC} and Corollary \ref{ZeroACEta}, $|\mathcal C_{\epsilon_k}(g_n\mu)(z) | \le M _n,~ \eta-a.a.$ and  $\lim_{k\rightarrow \infty} \mathcal C_{\epsilon_k}(g_n\mu)(z)  = \mathcal C(g_n\mu)(z) ,~ \eta -a.a.$. Applying the Lebesgue dominated convergence theorem to \eqref{lemmaBasicEq3}, we get \eqref{BBFunctLemmaEq1}.
\end{proof}

For $\nu\in M_0(\C),$ define $\Theta_\nu (\lambda ) := \lim_{\delta\rightarrow 0} \frac{|\nu |(\D(\lambda , \delta ))}{\delta}$
 if the limit exists.
The following lemma follows from \cite[Lemma 8.12]{Tol14}.

\begin{lemma}\label{RNDecom2}
Let $B\subset \C$ be a bounded measurable subset and $g\in L^1(\area_B).$ Then
\[
\ \Theta_{g\area_B} (z) = 0,~\gamma-a.a..
\]
\end{lemma}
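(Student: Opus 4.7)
The plan is to reduce the conclusion to a covering argument that couples the absolute continuity of $g\,d\area$ with the countable semiadditivity of analytic capacity from Theorem \ref{TolsaTheorem}(2). Set $\nu = g\area_B$; this is a finite measure absolutely continuous with respect to $\area$. For each $a>0$ define
\[
E_a = \left\{z \in \C : \limsup_{\delta \to 0} \frac{|\nu|(\D(z,\delta))}{\delta} \geq a\right\}.
\]
It will be enough to prove $\gamma(E_a)=0$ for every $a>0$: the set where $\Theta_\nu(z) = 0$ fails is contained in $\bigcup_n E_{1/n}$, which by Theorem \ref{TolsaTheorem}(2) applied with $m=\infty$ has zero analytic capacity once each $E_{1/n}$ does, and the vanishing of the $\limsup$ of course forces the limit itself to exist and be zero.

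The first step is a standard $L^\infty$ truncation. Given $\epsilon > 0$, write $g = g_1 + g_2$ with $|g_1|\le N$ and $\|g_2\|_{L^1(\area_B)} < \epsilon$. Because $|g_1\area_B|(\D(z,\delta)) \leq \pi N \delta^2$, the $g_1$-part contributes zero to the upper density at every $z$, and the inequality $|g| \leq |g_1| + |g_2|$ then places $E_a$ inside
\[
F_a := \left\{z \in \C : \limsup_{\delta\to 0}\frac{|g_2\area_B|(\D(z,\delta))}{\delta} \geq a\right\}.
\]

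The second step is to run the $5r$-covering lemma on $F_a$. For every $z \in F_a$ there is an arbitrarily small $\delta > 0$ with $|g_2\area_B|(\D(z,\delta)) \geq a\delta$, and this family of disks (of uniformly bounded radius) admits a countable disjoint subfamily $\{\D(z_i,\delta_i)\}$ whose $5$-fold dilates still cover $F_a$. Disjointness and the mass lower bound yield
\[
a\sum_i \delta_i \leq \sum_i |g_2\area_B|(\D(z_i,\delta_i)) \leq \|g_2\|_{L^1(\area_B)} < \epsilon.
\]
Since $\gamma(\D(z_i, 5\delta_i)) = 5\delta_i$, the countable semiadditivity in Theorem \ref{TolsaTheorem}(2) gives $\gamma(E_a) \leq \gamma(F_a) \leq 5 A_T \epsilon / a$, and letting $\epsilon \to 0$ closes the argument.

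I do not expect a serious obstacle; the only things that need care are bookkeeping items, namely that the $5r$-covering lemma produces a \emph{countable} disjoint subfamily (automatic for disks in $\R^2$, since any disjoint family of open disks is countable) and that Theorem \ref{TolsaTheorem}(2) is invoked with $m=\infty$, which the statement explicitly allows. Conceptually, the proof is an interplay between the linear scaling of $\gamma$ on disks and the quadratic scaling of $\area$ on disks, with the $L^\infty$ split providing the crucial smallness of the $g_2$ piece.
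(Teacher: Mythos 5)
Your proof is correct. The paper itself does not give an argument at all for this lemma; it simply refers the reader to Lemma 8.12 of Tolsa's book, so your contribution is a self-contained verification rather than a rederivation of the paper's method. The scheme you use -- an $L^\infty$ truncation $g=g_1+g_2$ with $\|g_2\|_{L^1}<\epsilon$, the observation that the bounded part $g_1$ contributes nothing to the linear upper density, a $5r$-covering of the exceptional set $F_a$ by disks on which $g_2\area_B$ has mass at least $a\delta$, and then $\gamma(\D(z,r))=r$ together with Theorem~\ref{TolsaTheorem}(2) to sum the capacities -- is the standard one and it closes cleanly. The only cosmetic blemish is that $\limsup\geq a$ only guarantees arbitrarily small $\delta$ with $|g_2\area_B|(\D(z,\delta))\geq (a-\eta)\delta$ for any $\eta>0$, not $\geq a\delta$ exactly; replacing $a$ by $a/2$ in the covering step (or defining $E_a$ with a strict inequality) fixes this without affecting the final conclusion.

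One remark on economy of means: your argument invokes Tolsa's countable semiadditivity of $\gamma$, which is the deep 2003 theorem. For this particular lemma that is more firepower than necessary. The $5r$-covering already shows that $F_a$ has $1$-dimensional Hausdorff content at most $5\epsilon/a$, hence $\mathcal H^1(E_a)=0$, and the classical Painlev\'e theorem (sets of zero length have zero analytic capacity) then gives $\gamma(E_a)=0$; the union $\bigcup_n E_{1/n}$ still has $\mathcal H^1$-measure zero by the automatic countable subadditivity of Hausdorff measure, so Painlev\'e again gives $\gamma\bigl(\bigcup_n E_{1/n}\bigr)=0$ without appealing to semiadditivity of $\gamma$ at all. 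Since the paper freely uses Tolsa's theorem elsewhere, your route is perfectly legitimate here; it is just worth knowing that the conclusion is really an elementary density-plus-Painlev\'e fact.
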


\begin{definition}\label{GLDef}
Let  $\mathcal Q$ be a set with $\gamma(\mathcal Q) = 0$.
Let $f(z)$ be a function defined 
on $\D(\lambda, \delta_0)\setminus \mathcal Q$ for some $\delta_0 > 0.$ The function $f$ has a $\gamma$-limit $a$ at $\lambda$ if
\[  
 \  \lim_{\delta \rightarrow 0} \dfrac{\gamma(\D(\lambda, \delta) \cap \{|f(z) - a| > \epsilon\})} {\delta}= 0
\]
for all $\epsilon > 0$. If in addition, $f(\lambda)$ is well defined and $a = f(\lambda)$, then $f$ is $\gamma$-continuous at $\lambda$. 
\end{definition}

The following lemma is straightforward (see \cite[Corollary 2.5]{cy22}). 

\begin{lemma}\label{GCProp}
If $f(z)$ and $g(z)$ are $\gamma$-continuous at $\lambda,$ then $f(z)+g(z)$ and $f(z)g(z)$ are $\gamma$-continuous at $\lambda.$ If in addition $g(\lambda) \ne 0,$ then $\frac{f(z)}{g(z)}$ is $\gamma$-continuous at $\lambda.$ 
 \end{lemma}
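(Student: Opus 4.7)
The plan is to reduce each assertion to a set-theoretic inclusion of the form $\{|h(z)-h(\lambda)|>\epsilon\} \subset A_1 \cup A_2 \cup A_3$, where each $A_j$ is a sub-level set associated to $f-f(\lambda)$ or $g-g(\lambda)$, and then invoke semiadditivity of analytic capacity (Theorem \ref{TolsaTheorem} (2)) together with the defining limit in Definition \ref{GLDef}.

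For the sum, $|(f+g)(z) - (f+g)(\lambda)|>\epsilon$ forces $|f(z)-f(\lambda)|>\epsilon/2$ or $|g(z)-g(\lambda)|>\epsilon/2$, and Theorem \ref{TolsaTheorem} (2) then bounds $\gamma\bigl(\D(\lambda,\delta)\cap\{|(f+g) - (f+g)(\lambda)|>\epsilon\}\bigr)$ by $A_T$ times the sum of the corresponding quantities for $f$ and $g$; dividing by $\delta$ and letting $\delta\to 0$ finishes this case by the $\gamma$-continuity hypothesis.

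For the product I would use the identity $fg - f(\lambda)g(\lambda) = (f-f(\lambda))g + f(\lambda)(g-g(\lambda))$. Setting $M := |g(\lambda)|+1$, if $|g(z)|\le M$, $|f(z)-f(\lambda)|\le \epsilon/(2M)$, and $|g(z)-g(\lambda)|\le \epsilon/(2(|f(\lambda)|+1))$, then $|(fg)(z)-f(\lambda)g(\lambda)|\le \epsilon$. Hence the bad set is contained in
\[
\{|g|>M\} \cup \{|f-f(\lambda)|>\epsilon/(2M)\} \cup \{|g-g(\lambda)|>\epsilon/(2(|f(\lambda)|+1))\},
\]
and $\{|g|>M\}\subset\{|g-g(\lambda)|>1\}$ has $\gamma$-density zero at $\lambda$ by hypothesis; semiadditivity once more completes the argument.

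For the quotient with $g(\lambda)\neq 0$, I would first show $1/g$ is $\gamma$-continuous at $\lambda$. On $\{|g-g(\lambda)|\le |g(\lambda)|/2\}$ one has $|g(z)|\ge |g(\lambda)|/2$, whence $|1/g(z)-1/g(\lambda)|\le 2|g(z)-g(\lambda)|/|g(\lambda)|^2$; therefore $\{|1/g-1/g(\lambda)|>\epsilon\}$ is contained in $\{|g-g(\lambda)|>|g(\lambda)|/2\} \cup \{|g-g(\lambda)|>\epsilon|g(\lambda)|^2/2\}$. Semiadditivity yields $\gamma$-continuity of $1/g$, and then $f/g = f\cdot(1/g)$ is $\gamma$-continuous by the product case already proved. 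There is no genuine obstacle here; the only mildly delicate point is linearizing the product without any a priori pointwise bound on $g$ near $\lambda$, which is handled by discarding the low-$\gamma$-density set $\{|g|>M\}$.
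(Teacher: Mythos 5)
Your proof is correct, and the decomposition of each bad set into finitely many sub-level sets of $f-f(\lambda)$ and $g-g(\lambda)$, followed by Tolsa's semiadditivity (Theorem \ref{TolsaTheorem} (2)) and the limit in Definition \ref{GLDef}, is exactly the standard argument. The paper itself supplies no proof of this lemma—it declares it ``straightforward'' and cites \cite[Corollary 2.5]{cy22}—so there is nothing to contrast your argument against beyond noting that it is the expected one.
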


 The following lemma is from \cite[Lemma 3.2]{acy19}. 

\begin{lemma}\label{CauchyTLemma} 
Let $\nu\in M_0(\mathbb{C})$ and assume that for some $\lambda$ in $\mathbb C$ we have:
\begin{itemize}
\item[(a)] $\underset{\delta\rightarrow 0}{\overline \lim} \dfrac{| \nu |(\D (\lambda, \delta))}{\delta }= 0$ and 
\item[(b)] $\mathcal{C} (\nu)(\lambda) = \lim_{\epsilon \rightarrow 0}\mathcal{C} _{\epsilon}(\nu)(\lambda)$ exists.
\end{itemize}

Then the  Cauchy transform $\mathcal{C}(\nu)(z)$ is $\gamma$-continuous at $\lambda$.
\end{lemma}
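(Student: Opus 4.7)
Let me outline my plan for proving Lemma \ref{CauchyTLemma}.

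\textbf{Setup and overall strategy.} Fix $\epsilon>0$; the goal is to show that
\[
\ \gamma\bigl(\D(\lambda,\delta)\cap\{z:|\mathcal{C}(\nu)(z)-\mathcal{C}(\nu)(\lambda)|>\epsilon\}\bigr) = o(\delta) \quad \text{as } \delta\to 0.
\]
The natural move is to split the measure into a ``near'' part and a ``far'' part at scale $2\delta$: let $\nu_0 = \nu|_{\D(\lambda,2\delta)}$ and $\nu' = \nu - \nu_0$. Then for any $z$ at which both principal values exist (off a $\gamma$-null set by Corollary \ref{ZeroAC}),
\[
\ \mathcal C(\nu)(z)-\mathcal C(\nu)(\lambda) = \bigl[\mathcal C(\nu')(z)-\mathcal C(\nu')(\lambda)\bigr] + \bigl[\mathcal C(\nu_0)(z)-\mathcal C(\nu_0)(\lambda)\bigr].
\]
I will show the first bracket is uniformly small on all of $\D(\lambda,\delta)$ (deterministic pointwise bound), while the second bracket is large only on a set of $\gamma$-measure $o(\delta)$.

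\textbf{The far part $\nu'$.} Since $\mathrm{spt}(\nu')\subset\C\setminus\D(\lambda,2\delta)$, the transform $\mathcal C(\nu')$ is analytic on $\D(\lambda,2\delta)$ and
\[
\ \mathcal C(\nu')(z)-\mathcal C(\nu')(\lambda) = \int\frac{z-\lambda}{(w-z)(w-\lambda)}\,d\nu'(w).
\]
For $z\in\D(\lambda,\delta)$ and $|w-\lambda|\ge 2\delta$ we have $|(w-z)(w-\lambda)|\ge \tfrac12|w-\lambda|^2$, so the modulus is bounded by $2\delta \int_{|w-\lambda|\ge 2\delta}\frac{d|\nu|(w)}{|w-\lambda|^2}$. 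Via Fubini this equals $2\delta\int_{2\delta}^\infty \frac{2}{t^3}|\nu|(\D(\lambda,t))\,dt$. Hypothesis (a) gives, for any $\eta>0$, a threshold $t_0$ with $|\nu|(\D(\lambda,t))\le\tfrac{\eta}{2}t$ for $t<t_0$; splitting the integral at $t_0$ and using $|\nu|(\D(\lambda,t))\le\|\nu\|$ on the tail yields a bound of the form $\tfrac{\eta}{2}+O(\delta)$. Hence this term is $o(1)$ as $\delta\to 0$, and in particular $<\epsilon/2$ for $\delta$ small.

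\textbf{The near part $\nu_0$.} Two ingredients are needed. First, by a direct computation $\mathcal C(\nu_0)(\lambda) = \mathcal C(\nu)(\lambda) - \mathcal C_{2\delta}(\nu)(\lambda)$, and hypothesis (b) forces $\mathcal C_{2\delta}(\nu)(\lambda)\to\mathcal C(\nu)(\lambda)$ as $\delta\to 0$, so $|\mathcal C(\nu_0)(\lambda)|<\epsilon/8$ for $\delta$ small. Second, Tolsa's weak-type estimate (Theorem \ref{TolsaTheorem}(3)) gives
\[
\ \gamma\bigl(\{\mathcal C_*(\nu_0)>3\epsilon/8\}\bigr) \le \frac{8C_T}{3\epsilon}\|\nu_0\| = \frac{8C_T}{3\epsilon}|\nu|(\D(\lambda,2\delta)).
\]
Condition (a) gives $|\nu|(\D(\lambda,2\delta))\le 2\eta\delta$ for any prescribed $\eta>0$ once $\delta$ is small enough. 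Since $|\mathcal C(\nu_0)(z)|\le\mathcal C_*(\nu_0)(z)$ wherever the principal value exists, combining the two ingredients bounds $\gamma$ of the bad set for $\nu_0$ by $\tfrac{16C_T\eta}{3\epsilon}\delta$.

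\textbf{Combining.} On $\D(\lambda,\delta)$ the first bracket is $<\epsilon/2$ pointwise (for $\delta$ small), so the bad set $\{|\mathcal C(\nu)(z)-\mathcal C(\nu)(\lambda)|>\epsilon\}\cap\D(\lambda,\delta)$ is contained (up to a $\gamma$-null exceptional set from Corollary \ref{ZeroAC}) in the bad set for $\nu_0$ at threshold $\epsilon/2$, which by the previous step has $\gamma$-measure $\le \frac{16C_T\eta}{3\epsilon}\delta$. Letting $\delta\to 0$ allows $\eta\to 0$, giving the desired $o(\delta)$ bound. The main subtle point — and really the only one — is step (b) for the near part: recognizing that the principal-value hypothesis (b) is exactly what forces $\mathcal C(\nu_0)(\lambda)\to 0$, so that the weak-type inequality applied to $\nu_0$ (whose mass is controlled by (a)) suffices on the $z$-side.
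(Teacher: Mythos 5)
The paper does not prove this lemma; it imports it verbatim from \cite[Lemma 3.2]{acy19}, so there is no in-paper proof to compare against. Your near/far decomposition at scale $2\delta$, combined with the deterministic bound on the far part via Fubini and hypothesis (a), the observation that (b) forces $\mathcal C(\nu_0)(\lambda)=\mathcal C(\nu)(\lambda)-\mathcal C_{2\delta}(\nu)(\lambda)\to 0$, and Tolsa's weak $(1,1)$ inequality for $\mathcal C_*(\nu_0)$ on the near part, is the standard argument and is correct. Two minor presentational points: the Fubini computation actually yields $|\nu|$ of the annulus $\{2\delta\le|w-\lambda|\le t\}$ rather than of $\D(\lambda,t)$, which is an upper bound and so still works; and in the final combination you should invoke semiadditivity (Theorem \ref{TolsaTheorem}(2)) to absorb the $\gamma$-null exceptional set from Corollary \ref{ZeroAC} together with the $\mathcal C_*$-level set, and you should use two independent smallness parameters (one fixed at, say, $\epsilon/4$ to control the far part, the other sent to $0$ with $\delta$ for the near part) rather than overloading a single $\eta$. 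Neither issue affects the validity of the argument.
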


\section{\textbf{Modified Vitushkin scheme of Paramonov and some lemmas}}

If  a compact subset $F$ is contained in $\D(a, \delta)$ and $f$ is bounded and analytic on $\C_\i \setminus F$ with $f(\infty) = 0,$ we consider the Laurent expansion of $f$ for $z\in \mathbb C\setminus \D(a, \delta),$

\[
 \ f(z) = \sum_{n=1}^\infty \dfrac{c_n(f,a)}{(z-a)^n}.
 \]
 As $c_1(f, a)$ does not depend on the choice of $a$,  we define 
 $c_1 (f) = c_1(f, a).$ The coefficient $c_2 (f,a)$ does  depend on $a$. However, if $c_1 (f) = 0$, then $c_2 (f,a)$ does not depend on $a$, and in this case, we define $c_2 (f) = c_2 (f,a).$
 
 Let $\varphi$ be a smooth function with compact support. Vitushkin's localization operator
$T_\varphi$ is defined by
 \[
 \ (T_\varphi f)(\lambda) = \dfrac{1}{\pi}\int \dfrac{f(z) - f(\lambda)}{z - \lambda} \bar\partial \varphi (z) d\area(z),
 \]
where $f\in L^1_{loc} (\C)$.	 Clearly, $(T_\varphi f)(z) = - \frac{1}{\pi}\mathcal C(\varphi \bar\partial f\area ) (z).$
Consequently, in the sense of distributions,
\[
\ \bar \partial (T_\varphi f)(z) = \varphi (z) \bar \partial f(z). 
\]
Therefore, $T_\varphi f$ is analytic outside of $\text{supp} (\bar \partial f) \cap\text{supp} (\varphi).$ If $\text{supp} (\varphi) \subset \D(a,\delta),$ then  
\[
\begin{aligned}
  \ & (T_\varphi f)(\i ) = 0,~ c_1(T_\varphi f) = - \dfrac{1}{\pi}\int f(z)\bar\partial \varphi (z) d\area(z), \\
  \ & c_2(T_\varphi f,a) =  \dfrac{1}{\pi}\int (z-a)f(z)\bar\partial \varphi (z) d\area(z).
\end{aligned}
\]
The following is the estimate of the norm of $T_\varphi f.$
\begin{eqnarray} \label{TOperProp}
 \ \| T_\varphi f\|_\i   \le  4\|f\|_\i  \delta\|\bar\partial \varphi\|.
 \end{eqnarray}	
 See \cite[VIII.7.1]{gamelin} for the details of $T_\varphi.$

Let $\delta > 0$. We say that 
$\{\varphi_{ij},S_{ij}, \delta\}$ is a smooth partition of unity subordinated to $\{2S_{ij}\},$ 
 if the following assumptions hold:

(i)  $S_{ij}$ is a square with vertices   $(i\delta,j\delta),~((i+1)\delta,j\delta),~(i\delta,(j+1)\delta),$ and $((i+1)\delta,(j+1)\delta);$

(ii) $c_{ij}$ is the center of $S_{ij}$ and $\varphi_{ij}$ is a $C^\infty$ smooth function supported in $\mathbb D(c_{ij}, \delta) \subset 2S_{ij}$ and with values in $[0,1]$;

(iii) 
 \[
 \ \|\bar\partial \varphi_{ij} \| \le \frac{C_{13}}{\delta},~ \sum \varphi_{ij} = 1.
 \]
(See \cite[VIII.7]{gamelin} for details).

Let $f\in C(\C_\i)$ (or $f\in L^\i (\C)$) with compact support.  Define $f_{ij} = T_{\varphi_{ij}}f,$ then $f_{ij} \ne 0$ for only finite many $(i,j).$ Clearly,
\[
\ f(z) = \sum_{f_{ij}\ne 0}f_{ij}(z).
\] 
The standard Vitushkin approximation scheme requires us to construct functions $a_{ij}$ such that $f_{ij} - a_{ij}$ has triple zeros at $\infty$, which requires us to estimate both $c_1 (a_{ij})$ and $c_2(a_{ij}, c_{ij})$ (see \cite[section 7 on page 209]{gamelin}). 

The main idea of P. V. Paramonov \cite{p95} is that one does not actually need to estimate
each coefficient $c_2(a_{ij}, c_{ij})$. It suffices to estimate 
the sum of coefficients $\sum_{j\in J_{il}} c_2(a_{ij}, c_{ij})$ for a special non-intersecting partition $\{J_{il}\}$ of $J_i = \{j_{min} \le j \le j_{max}\},$ where $j_{min} = \min \{j:~ f_{ij} \ne 0\}$ and $j_{max} = \max \{j:~ f_{ij} \ne 0\}.$

We now state the modified Vitushkin approximation scheme by P. V. Paramonov.
We fix a bounded Borel subset $F\subset \C.$ Let 
\begin{eqnarray}\label{AlphaIJ}
\ \alpha_{ij} = \gamma (\D(c_{ij}, k_1\delta) \cap F),
\end{eqnarray} 
where $k_1\ge 3$ is a fixed integer.
   Let $m_1 =   \min\{i,j,~  \D(c_{ij}, k_1\delta) \cap F \ne \emptyset\}$ and $m_2 = \max\{i,j,~  \D(c_{ij}, k_1\delta) \cap F \ne \emptyset\}.$
   Note both $m_1$ and $m_2$ are finite.
Set $min_i = \min\{j:~  \alpha_{ij}\ne 0 \}$ and $max_i = \max\{j:~ \alpha_{ij}\ne 0 \}$. Let $J_i = \{j:~ min_i \le j \le max_i\}$. We will also fix a constant $C_0 > 1.$

\begin{definition}
For $j_1 + 1\in J_i,$ let 
 \begin{eqnarray}\label{IndexDefinition1}
 \ J = \{j:~ j_1 + 1\le j \le j_1 + s_1 + s_2+s_3 \}.
 \end{eqnarray}
 Choose positive integers $s_1,$ $s_2$, and $s_3$ so that
 \begin{eqnarray}\label{IndexDefinition2}
 \ \delta \le \sum_{j=j_1+1}^{j_1+s_1}\alpha_{ij} < \delta + k_1\delta,
 \end{eqnarray}
\[
\ 6k_1 + 4C_0 < s_2 \le 6k_1 + 4C_0 + 1,
\]
and 
 \begin{eqnarray}\label{IndexDefinition3}
  \ \delta \le \sum_{j=j_1+s_1+s_2+1}^{j_1+s_1+s_2+s_3}\alpha_{ij} < \delta + k_1\delta
 \end{eqnarray}
 hold. With this choice, we say that $J$ is a complete group.
 \end{definition}

 We now present a detailed description of the procedure of partitioning $J_i$ into
groups. We split each $J_i$ into (finitely many) non-intersecting groups $J_{il}$, $l = 1,...,l_i$, as follows.
Starting from the lowest index $min_i$ in $J_i$ we include in $J_{i1}$ (going upwards and without jumps in $J_i$) all indices until we have collected a minimal (with respect to the number of elements) complete group $J_{i1}$. Then we repeat this
procedure for $J_i\setminus J_{i1}$, and so on. After we have constructed all the complete
groups $J_{i1},...,J_{il_i-1}$ in this way  (there may be none), then  what remains is the last
portion $J_{il_i} = J_i\setminus(J_{i1}\cup...\cup J_{il_i-1})$ of indices in $J_i$, which includes no complete
groups. We call this portion $J_{il_i}$ an incomplete group of indices (clearly, there is at most one incomplete group for each $i$).

\begin{definition}\label{MVSPDef}
Let $\delta > 0,$ $\alpha_{ij},$ and $F$ be defined as in \eqref{AlphaIJ}. 
Define
\[
\ P_1(\delta, \alpha_{ij}, F, k_1, C_0) = \{J_{il}:~1\le l \le l_i -1,~ m_1 \le i \le m_2 \}
\] 
to be the collection of all complete groups and 
\[
\ P_2(\delta, \alpha_{ij}, F, k_1, C_0) = \{J_{il_i}:~m_1 \le i \le m_2 \}
\]
 to be the set of all incomplete groups
 as
 $i$ ranges from $m_1$ to $m_2.$
   The modified Vitushkin scheme of Paramonov is defined to be 
\[
\ P(\delta, \alpha_{ij}, F, k_1, C_0) = P_1(\delta, \alpha_{ij}, F, k_1, C_0) \cup P_2(\delta, \alpha_{ij}, F, k_1, C_0).
\]
\end{definition}

For a group $J$ (complete or incomplete) with row index $i,$ 
let 
\[
\ J'(z) = \{j\in J:~\alpha_{ij} > 0 \text{ and } |z-c_{ij}| >3k_1\delta  \},
\]
and
 \[
 \ L_J' (z) = \sum_{j\in J'(z)} \left ( \dfrac{\delta\alpha_{ij}}{|z - c_{ij}|^2} + \dfrac{\delta^3}{|z - c_{ij}|^3} \right ).
 \]
Define $L_J(z) = L_J' (z)$ if $\{j\in J:~\alpha_{ij} > 0\} = J'(z)$, otherwise, $L_J(z) = 1+L_J' (z)$. For $g_{ij}$ that is bounded and analytic on $\mathbb C\setminus F_{ij},$ where $F_{ij}$ is a compact subset of $\D(c_{ij}, k_1\delta),$ define
 \begin{eqnarray}\label{GIDefinition}
 \ g_J = \sum_{j\in J} g_{ij}, ~ c_1(g_J) = \sum_{j\in J} c_1(g_{ij}),~c_2 (g_J) = \sum_{j\in J} c_2 (g_{ij},c_{ij}).
 \end{eqnarray}

\begin{definition}\label{GApplicable}
A sequence of functions 	$\{g_{ij}\}$ is applicable to $P(\delta, \alpha_{ij}, F, k_1, C_0)$ and $\mu \in M_0^+(\C)$ if the following assumptions hold, 
for some absolute constant $C_{7}$:

(1) $g_{ij}$ is bounded and analytic on $\mathbb C_\i \setminus \overline{\D(c_{ij}, k_1\delta)};$ 

(2) $g_{ij}(\infty) = c_1(g_{ij}) = 0;$ 

(3) $\|g_{ij}\|_{L^\infty(\mu)} \le C_{7}$;

(4) $|c_n(g_{ij}, c_{ij})| \le C_{7} (k_1\delta)^{n-1} \alpha_{ij}$ for $n \ge 1.$
\end{definition}

The following lemma is straightforward. 

\begin{lemma} \label{LEProp}
If $f(z)$ is bounded and analytic on $\mathbb C_\i \setminus \overline{\D(a, \delta)}$ with $f(\i) = 0,$ $\|f\| \le 1,$ $F \subset \overline{\D(a, \delta)},$ and for $n \ge 1,$
\[
 \ |c_n(f,a)| \le C_{8} n \delta ^{n-1}\gamma(F).
 \]
 Then for  $|z - a| > 2\delta,$ 
 \[
 \ \left |f(z) - \dfrac{c_1(f)}{z-a}  - \dfrac{c_2(f,a)}{(z-a)^2} \right | \le \dfrac{C_{9}\gamma(F)\delta^2}{|z-a|^3}.
 \]
 \end{lemma}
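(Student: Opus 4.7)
The plan is to use the Laurent expansion of $f$ around $\infty$ directly, and then show that the assumed growth condition on the coefficients $c_n(f,a)$ forces the tail of the series (starting from $n=3$) to decay like $\delta^2/|z-a|^3$.

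First I would note that since $f$ is bounded and analytic on $\mathbb C_\infty \setminus \overline{\mathbb D(a,\delta)}$ with $f(\infty) = 0$, the Laurent series
\[
f(z) = \sum_{n=1}^\infty \frac{c_n(f,a)}{(z-a)^n}
\]
converges for all $|z-a| > \delta$. Subtracting off the first two terms gives
\[
f(z) - \frac{c_1(f)}{z-a} - \frac{c_2(f,a)}{(z-a)^2} = \sum_{n=3}^\infty \frac{c_n(f,a)}{(z-a)^n}.
\]
Applying the hypothesis $|c_n(f,a)| \le C_8 n\delta^{n-1}\gamma(F)$ term-by-term yields
\[
\left|f(z) - \frac{c_1(f)}{z-a} - \frac{c_2(f,a)}{(z-a)^2}\right| \le C_8 \gamma(F) \sum_{n=3}^\infty \frac{n\delta^{n-1}}{|z-a|^n}.
\]

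Next I would estimate the series. Setting $r = \delta/|z-a|$, the hypothesis $|z-a| > 2\delta$ gives $r < 1/2$, so
\[
\sum_{n=3}^\infty \frac{n\delta^{n-1}}{|z-a|^n} = \frac{1}{\delta}\sum_{n=3}^\infty n r^n = \frac{r^3}{\delta}\left(\frac{r}{(1-r)^2} + \frac{3}{1-r}\right),
\]
after shifting the index and using the standard formulas $\sum_{m\ge 0} r^m = 1/(1-r)$ and $\sum_{m\ge 0} m r^m = r/(1-r)^2$. Since $r < 1/2$, the quantity in parentheses is bounded by an absolute constant, so the whole sum is $\le C\,r^3/\delta = C\,\delta^2/|z-a|^3$.

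Combining the two displays and absorbing constants into $C_9 = C\cdot C_8$ gives the claimed estimate
\[
\left|f(z) - \frac{c_1(f)}{z-a} - \frac{c_2(f,a)}{(z-a)^2}\right| \le \frac{C_9 \gamma(F) \delta^2}{|z-a|^3}.
\]
There is no serious obstacle; the only mild subtlety is verifying convergence of the Laurent series on the annulus $|z-a| > \delta$ (so that the rearrangement leading to the tail sum is justified), which is immediate from $f$ being bounded and analytic on $\mathbb C_\infty \setminus \overline{\mathbb D(a,\delta)}$, and checking that the condition $|z-a| > 2\delta$ is what yields the geometric decay $r < 1/2$ needed to bound the series uniformly.
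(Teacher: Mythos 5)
Your proof is correct and is exactly the "straightforward" argument the paper has in mind (the paper gives no proof, simply asserting the lemma is straightforward): expand $f$ in its Laurent series, peel off the first two terms, bound the tail termwise using the coefficient hypothesis, and sum the resulting series $\sum_{n\ge 3} n r^n$ with $r=\delta/|z-a|<1/2$. All computations check out, and $C_9$ can be taken to be $8C_8$.
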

 
Therefore, if $\{g_{ij}\}$ is applicable to $P(\delta, \alpha_{ij}, F, k_1, C_0),$  then for $|z-c_{ij}| >3k_1\delta,$
 \begin{eqnarray}\label{gijEst}
 \ |g_{ij}(z)| \le  \dfrac{C_{10}\delta \alpha_{ij} }{|z-c_{ij}|^2} + \dfrac{C_{10} \delta^3 }{|z-c_{ij}|^3}.
 \end{eqnarray}

The following lemma follows easily from  Definitions \ref{MVSPDef} and \eqref{gijEst}.

\begin{lemma}\label{BasicEstimate}
Let $\{g_{ij}\}$ be applicable to $P(\delta, \alpha_{ij}, F, k_1, C_0)$ and $\mu \in M_0^+(\C)$. If $J$ is a group (complete or incomplete) with row index $i,$
 then 
 \[
 \begin{aligned}
 \ & |g_J(z)| \le C_{11} L_J(z), ~ \|g_J\|(\|g_J\|_{L^\infty(\mu)}) \le C_{11},     \\ 
 \ & c_1(g_J) = 0,~|c_2(g_J)| \le C_{11} \delta^2.
 \end{aligned}
 \]
 \end{lemma}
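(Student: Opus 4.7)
The four conclusions are aggregate bounds on the partial sum $g_J = \sum_{j \in J} g_{ij}$, and all of them reduce to three ingredients: the pointwise decay \eqref{gijEst} for each individual $g_{ij}$, the uniform bound $\alpha_{ij} \le k_1\delta$, and a ``group-mass'' bound $\sum_{j \in J} \alpha_{ij} \le C(k_1, C_0)\delta$ valid for every complete or incomplete group. The last bound is read off from \eqref{IndexDefinition2} and \eqref{IndexDefinition3}: the first $s_1$ indices and the last $s_3$ indices each contribute less than $\delta(1+k_1)$, and the middle $s_2 \le 6k_1 + 4C_0 + 1$ indices each contribute at most $k_1\delta$. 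For an incomplete group, which by construction is a truncation of this three-phase pattern, the same bound applies because each sub-phase is already controlled in absolute terms.

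\textbf{Pointwise bound.} To prove $|g_J(z)| \le C_{11} L_J(z)$ I would split
\[
g_J(z) = \sum_{j \in J'(z)} g_{ij}(z) + \sum_{\substack{j \in J,\, \alpha_{ij}>0 \\ |z - c_{ij}| \le 3k_1\delta}} g_{ij}(z).
\]
The first sum is controlled termwise by \eqref{gijEst}, yielding exactly $C_{10}\, L_J'(z)$. For the second sum I observe that the centers $c_{ij}$ for a fixed row $i$ are equally spaced with step $\delta$ along a vertical line, so at most $6k_1 + 1$ of them lie within distance $3k_1\delta$ of $z$; each corresponding $g_{ij}$ is bounded by an absolute constant (by condition (3), or equivalently from the Laurent expansion using condition (4) as in Lemma \ref{LEProp}). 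Since the very presence of such a near index forces $L_J(z) \ge 1$ by its definition, this second contribution is absorbed into $C_{11} L_J(z)$.

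\textbf{Uniform bounds and coefficients.} The sup-norm bound $\|g_J\|_\infty \le C_{11}$ follows from the pointwise bound once $L_J(z)$ is shown to be uniformly bounded in $z$. Using $|z - c_{ij}| \ge |z_y - c_{ij,y}|$ on the vertical line of centers, together with $\alpha_{ij} \le k_1\delta$ and the restriction $|z - c_{ij}| > 3k_1\delta$ on $J'(z)$, both series $\sum \delta \alpha_{ij}/|z-c_{ij}|^2$ and $\sum \delta^3/|z-c_{ij}|^3$ collapse to convergent one-dimensional sums $\sum 1/n^2$ and $\sum 1/n^3$. Hence $L_J(z) \le C(k_1, C_0)$ uniformly, giving both $\|g_J\|_\infty \le C_{11}$ and $\|g_J\|_{L^\infty(\mu)} \le C_{11}$. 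The identity $c_1(g_J) = 0$ is immediate from condition (2) and the definition \eqref{GIDefinition}. For $c_2(g_J)$, condition (4) at $n = 2$ gives $|c_2(g_{ij}, c_{ij})| \le C_7 k_1 \delta\, \alpha_{ij}$, and summing over $j \in J$ while applying the group-mass bound yields $|c_2(g_J)| \le C_7 k_1 \delta \cdot C\delta = C_{11}\delta^2$.

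\textbf{Main obstacle.} The genuinely delicate step is the group-mass bound for an \emph{incomplete} group: this group is defined purely by the failure to finish a complete group, so one must carefully trace through which of the three sub-phases (accumulating $s_1$, passing through the fixed $s_2$, or accumulating $s_3$) has failed and confirm that any proper truncation is still bounded by $O(\delta)$. Once this combinatorial book-keeping is in hand, the remainder of the argument is straightforward aggregation and elementary one-dimensional summation.
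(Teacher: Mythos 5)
Your proposal is correct and fills in the paper's one-sentence sketch (``follows easily from Definitions \ref{MVSPDef} and \eqref{gijEst}'') in the way the author clearly intends: the decay estimate \eqref{gijEst} for far terms, a count of near terms, and the group-mass bound read off from \eqref{IndexDefinition2}--\eqref{IndexDefinition3} for the $c_2$ estimate. Two small caveats. First, the parenthetical claim that boundedness of $g_{ij}$ at near points follows ``equivalently from the Laurent expansion using condition (4)'' is not right as stated: summing $|c_n(g_{ij},c_{ij})|/r^n$ with $|c_n(g_{ij},c_{ij})|\le C_7(k_1\delta)^{n-1}\alpha_{ij}$ and $\alpha_{ij}\le k_1\delta$ only gives a bound for $r$ bounded away from $k_1\delta$, and says nothing inside $\overline{\D(c_{ij},k_1\delta)}$, where $g_{ij}$ need not even be analytic. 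What one actually invokes is the global norm bound in condition (3); the dual notation $\|g_J\|(\|g_J\|_{L^\infty(\mu)})$ in the conclusion merely records that whichever norm bound the $g_{ij}$ carry ($L^\infty(\C)$ in the application of Lemma \ref{GIJHIJDefLemma}, or only $L^\infty(\mu)$ in the abstraction of Definition \ref{GApplicable}) is inherited by $g_J$. Second, the step you flag as ``genuinely delicate'' --- the mass bound for incomplete groups --- is in fact routine and needs no case analysis: by construction an incomplete group is a consecutive run that is a truncation of the three-phase accumulation, and each phase carries an a priori cap ($<(1+k_1)\delta$ for each accumulation phase, since the last increment added is $\le k_1\delta$, and $\le(6k_1+4C_0+1)k_1\delta$ for the middle block), so any truncation has mass $\le(2(1+k_1)+(6k_1+4C_0+1)k_1)\delta$ regardless of where it stops. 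With these adjustments your argument is complete.
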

 
 The following key lemma  is due to \cite[Lemma 2.7]{p95}.
	
	\begin{lemma}\label{BasicEstimate3}
Let $\{g_{ij}\}$ be applicable to $P(\delta, \alpha_{ij}, F, k_1, C_0)$ and $\mu \in M_0^+(\C).$  Suppose

(H1) there exists $h_{ij} \in L^\infty (\mu)$ that is bounded and
 analytic on $\mathbb C_\i\setminus \overline{\D(c_{ij},k_1\delta)}$ 
 and satisfies
 \begin{eqnarray}\label{HConditions}
 \begin{aligned}
 \ & h_{ij}(\infty) = 0,~\|h_{ij}\| \le C_0, ~ c_1(h_{ij}) = \alpha_{ij}, \\
 \ & |c_n(h_{ij},c_{ij})| \le C_0 (k_1\delta) ^{n-1}\alpha_{ij} \text{ for } n \ge 1.
 \end{aligned}
 \end{eqnarray}
 
 Then for each complete group $J_{il}\in P_1(\delta, \alpha_{ij}, F, k_1, C_0)$, there exists a function
$h_{J_{il}}$ that has the following form
 \begin{eqnarray}\label{HFunction}
 \ \begin{aligned}
 \ & H^i_{jk} : = \dfrac{\delta}{|c_{ik} - c_{ij}|}(\lambda_{ik}^jh_{ik} - \lambda_{ij}^kh_{ij}), \\
 \ & h_{J_{il}}^0 = \sum_{j\in J_{il}^d}\sum_{k\in J_{il}^u} H^i_{jk}, \\
 \ &h_{J_{il}} = \dfrac{c_2 (g_{J_{il}})}{c_2 (h_{J_{il}}^0)}h_{J_{il}}^0, 
\ \end{aligned}
 \end{eqnarray}
where $J_{il}^d = (j_1+1,...,j_1+s_1)$ and $J_{il}^u = (j_1+s_1+s_2+1,...,j_1+s_1+s_2+s_3)$,
and there exist $\lambda_{ij}^k, \lambda_{ik}^j \ge 0$ for $j\in J_{il}^d$ and $k\in J_{il}^u$ such that (H2)-(H5) below are satisfied:

(H2) 
 \[
 \ \sum_{j\in J_{il}^d}\lambda_{ik}^j \le 1, ~ \sum_{k\in J_{il}^u} \lambda_{ij}^k \le 1;
 \]  

(H3)
 \[
 \ \sum_{j\in J_{il}^d}\sum_{k\in J_{il}^u}\lambda_{ij}^k\alpha_{ij} = \delta, ~ \sum_{j\in J_{il}^d}\sum_{k\in J_{il}^u}\lambda_{ik}^j\alpha_{ik} = \delta, ~ \lambda_{ik}^j\alpha_{ik} = \lambda_{ij}^k \alpha_{ij}; 
 \]

(H4) $c_1(H^i_{jk}) = 0,$ $s_2 > 6k_1 + 4C_0,$ and (see (2.28) in \cite{p95} for details)
	\[
	\ |c_2(H^i_{jk}) - \lambda_{ij}^k\alpha_{ij} \sqrt{-1}\delta | \le \dfrac{\lambda_{ij}^k\alpha_{ij}\delta}{2}; 
	\]

(H5) if $|z - c_{ij}| > 3k_1\delta$ and $|z - c_{ik}| > 3k_1\delta$, then
 \[
 \ |H^i_{jk} (z)| \le C_{13}\left (\dfrac{\lambda_{ij}^k\alpha_{ij}\delta}{|z - c_{ij}|^2} + \dfrac{\lambda_{ik}^j\alpha_{ik}\delta}{|z - c_{ik}|^2}\right)
 \]
and for all $z \in \mathbb C,$
 \[
 \begin{aligned}
 \ &|h_{J_{il}}(z)| \le C_{14}  L_{J_{il}}(z),  ~ \|h_{J_{il}}\| \le C_{14}, \\
 \  &c_1 (h_{J_{il}}) = 0, ~ c_2 (h_{J_{il}}) = c_2 (g_{J_{il}}).
 \end{aligned}
 \]
\end{lemma}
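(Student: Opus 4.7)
The plan is to construct the coefficients $\lambda_{ij}^k,\lambda_{ik}^j$ by a finite transportation argument and then verify (H2)--(H5) by direct computation with the bounds in (H1). First I would introduce flow variables $\beta_{jk}\ge 0$ for $(j,k)\in J_{il}^d\times J_{il}^u$ and require
\[
\sum_{k}\beta_{jk}\le \alpha_{ij},\quad \sum_{j}\beta_{jk}\le \alpha_{ik},\quad \sum_{j,k}\beta_{jk}=\delta.
\]
Feasibility follows from \eqref{IndexDefinition2}--\eqref{IndexDefinition3}, which force both the row marginals $\sum_{j\in J_{il}^d}\alpha_{ij}$ and the column marginals $\sum_{k\in J_{il}^u}\alpha_{ik}$ to be at least $\delta$; a northwest-corner construction then produces a valid $\beta_{jk}$. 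Setting $\lambda_{ij}^k=\beta_{jk}/\alpha_{ij}$ and $\lambda_{ik}^j=\beta_{jk}/\alpha_{ik}$ makes (H2) and (H3) immediate.

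The vanishing $c_1(H^i_{jk})=0$ is a one-line calculation using $c_1(h_{ij})=\alpha_{ij}$, $c_1(h_{ik})=\alpha_{ik}$, and the third equation of (H3). The core of the proof is the $c_2$ estimate. Expanding $h_{ik}$ in a Laurent series about $c_{ij}$ yields
\[
c_2(h_{ik},c_{ij})=\alpha_{ik}(c_{ik}-c_{ij})+c_2(h_{ik},c_{ik}),
\]
(higher-order terms $c_n(h_{ik},c_{ik})$ for $n\ge 3$ do not contribute to the $(z-c_{ij})^{-2}$ coefficient). Since $c_{ik}-c_{ij}=(k-j)\sqrt{-1}\,\delta$ is purely imaginary with $|c_{ik}-c_{ij}|\ge (s_2+1)\delta$, the main contribution to $c_2(H^i_{jk})$ is
\[
\lambda_{ik}^j\alpha_{ik}\sqrt{-1}\,\delta=\lambda_{ij}^k\alpha_{ij}\sqrt{-1}\,\delta
\]
by (H3). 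The residual errors involve $c_2(h_{ij},c_{ij})$ and $c_2(h_{ik},c_{ik})$, each of size $C_0 k_1\delta\alpha$ by (H1), multiplied by the factor $\delta/|c_{ik}-c_{ij}|\lesssim 1/s_2$; the buffer choice $s_2>6k_1+4C_0$ is calibrated so that the total error stays below $\lambda_{ij}^k\alpha_{ij}\delta/2$, establishing (H4).

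Once (H4) is secured, (H5) is routine. Lemma \ref{LEProp} applied to $h_{ij}$ and $h_{ik}$ yields the claimed pointwise decay of $|H^i_{jk}(z)|$ for $|z-c_{ij}|,|z-c_{ik}|>3k_1\delta$; summing over $(j,k)\in J_{il}^d\times J_{il}^u$ and reindexing by source column collapses the double sum into the $L_{J_{il}}$-bound for $h_{J_{il}}^0$, while the $L^\infty$ bound follows directly from $\|h_{ij}\|\le C_0$ and the cardinality constraints on the groups. Because the main terms of $c_2(H^i_{jk})$ all point in the direction $\sqrt{-1}$ with error bounded by half the main part, the sum $c_2(h_{J_{il}}^0)$ adds coherently to give $|c_2(h_{J_{il}}^0)|\ge \delta^2/2$ (using $\sum_{j,k}\lambda_{ij}^k\alpha_{ij}=\delta$). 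Combined with $|c_2(g_{J_{il}})|\le C_{11}\delta^2$ from Lemma \ref{BasicEstimate}, the renormalization factor $c_2(g_{J_{il}})/c_2(h_{J_{il}}^0)$ has bounded modulus, so the estimates for $h_{J_{il}}^0$ transfer to $h_{J_{il}}$.

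The main obstacle is the $c_2$ computation in (H4): balancing the intrinsic $c_2$-tail of $h_{ij}$, the truncated Laurent tail of $h_{ik}$ about $c_{ij}$, and their interaction with the buffer $s_2$ requires careful bookkeeping, and the explicit constraint $s_2>6k_1+4C_0$ is chosen exactly so the cumulative error fits inside the $1/2$ slack. Everything else follows from standard decay estimates once the main-term identification is correct.
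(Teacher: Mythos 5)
The paper does not prove this lemma itself: it is stated as Paramonov's Lemma 2.7 from \cite{p95}, and the paper defers entirely to that source. Your blind attempt is essentially a reconstruction of Paramonov's argument, and the central ideas are in the right place: the transportation feasibility coming from \eqref{IndexDefinition2}--\eqref{IndexDefinition3}; setting $\lambda_{ij}^k = \beta_{jk}/\alpha_{ij}$ and $\lambda_{ik}^j = \beta_{jk}/\alpha_{ik}$ so that (H2), (H3), and $c_1(H^i_{jk})=0$ are immediate; the observation that $c_{ik}-c_{ij} = (k-j)\sqrt{-1}\,\delta$ is purely imaginary, isolating the main term $\lambda_{ij}^k\alpha_{ij}\sqrt{-1}\,\delta$ in $c_2(H^i_{jk})$; and absorbing the $c_2$-tail errors into the $1/2$ slack via the buffer $s_2$. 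That is the correct strategy.

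Two places are glossed. First, the $L^\infty$ bound $\|h_{J_{il}}\|\le C_{14}$ does not follow \quotes{directly from the cardinality constraints on the groups}: the lengths $s_1,s_3$ are not bounded a priori (they grow when the individual $\alpha_{ij}$ are small). The actual mechanism is that \eqref{IndexDefinition2} and \eqref{IndexDefinition3} force $\sum_{j\in J_{il}}\alpha_{ij} < 2(k_1+1)\delta$, so $L_{J_{il}}$ is uniformly bounded; then the pointwise decay $|H^i_{jk}(z)|\le C\,\delta\,\lambda_{ij}^k\alpha_{ij}/\bigl(|z-c_{ij}|\,|z-c_{ik}|\bigr)$ together with the marginal bounds in (H2)--(H3) controls the double sum, including the at most $O(1)$ pairs $(j,k)$ for which $z$ lies within $3k_1\delta$ of a cell center (for those one still has decay in the other factor because $k-j\ge s_2+1 > 6k_1$). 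Second, the crude bookkeeping $|c_2(h_{ij},c_{ij})|\le C_0 k_1\delta\alpha_{ij}$ divided by $|c_{ik}-c_{ij}|\ge (s_2+1)\delta$ gives a requirement of the form $s_2\ge 4C_0 k_1$, which does not obviously match (and for large $C_0$ is stronger than) the stated $s_2 > 6k_1 + 4C_0$; reconciling this needs a tighter splitting of the slack between geometric separation of the disk supports ($6k_1$) and the $c_2$-tail absorption ($4C_0$), which is exactly what Paramonov's (2.28) tracks. Neither issue undermines the strategy, but both need to be closed to reproduce the cited result.
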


For $P_0 \subset P(\delta, \alpha_{ij}, F, k_1, C_0),$ define
\[
\ S(P_0) = \bigcup_{J_{il}\in P_0.~j\in J_{il},~ \alpha_{ij} > 0} 2k_1 S_{ij}.
\]

With above lemmas and slight modifications of the proof of (2.35) in \cite{p95}, we have the following lemma (also see \cite[Lemma 6.7]{cy23}). 

\begin{lemma}\label{BasicEstimate2}
Let $\{g_{ij}\}$ be applicable to $P(\delta, \alpha_{ij}, F, k_1, C_0)$ and $\mu \in M_0^+(\C)$.  Let $\{h_{ij}\}$ satisfy the assumption in Lemma \ref{BasicEstimate3}. Set $P_1 = P_1(\delta, \alpha_{ij}, F, k_1, C_0)$ and $P_2 = P_2(\delta, \alpha_{ij}, F, k_1, C_0).$ Let $P_1'\subset P_1$ and $P_2'\subset P_2.$
Suppose that for each complete group $J_{il},$ $h_{J_{il}}$  is constructed as in Lemma \ref{BasicEstimate3}.
 Set $\Psi_{J_{il}}(z) = g_{J_{il}}(z) - h_{J_{il}}(z).$
 Then 
 \begin{eqnarray}\label{FBounded3}
\ \sum_{J_{il}\in P_1'} |\Psi_{J_{il}}(z)| + \sum_{J_{il_i}\in P_2'} |g_{J_{il_i}}(z)| \le C_{15}\min \left (1, \dfrac{\delta}{dist(z, S(P_1'\cup P_2'))} \right )^\frac15. 
\end{eqnarray}
 \end{lemma}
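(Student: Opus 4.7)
The plan is to adapt Paramonov's argument for \cite[equation (2.35)]{p95} to the present framework, exploiting the cancellations guaranteed by Lemma \ref{BasicEstimate3} together with the pointwise bounds of Lemma \ref{BasicEstimate}. Fix $z\in \C$ and set $d=\mathrm{dist}(z, S(P_1'\cup P_2'))$. First I would dispose of the easy range $d\le \delta$: here the right-hand side is bounded below by a positive constant, so it suffices to show that the left-hand side is uniformly bounded. For each row index $i$, only $O(1)$ groups lie within one $\delta$-unit of $z$, and the pointwise estimates $|\Psi_{J_{il}}(z)|\le C L_{J_{il}}(z)$ (obtained by combining $|g_{J_{il}}|\le C_{11} L_{J_{il}}$ from Lemma \ref{BasicEstimate} with $|h_{J_{il}}|\le C_{14} L_{J_{il}}$ from (H5)) and $|g_{J_{il_i}}(z)|\le C L_{J_{il_i}}(z)$ sum to a finite total once the rapid decay of $L_J$ away from its group support is taken into account.

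In the main range $d>\delta$, the key input is that $\Psi_{J_{il}} = g_{J_{il}} - h_{J_{il}}$ has $c_1(\Psi_{J_{il}})=c_2(\Psi_{J_{il}})=0$ and $\|\Psi_{J_{il}}\|\le C$, while for the incomplete groups only $c_1(g_{J_{il_i}})=0$ is available. Lemma \ref{LEProp}, applied at the scale of the vertical extent $\rho_{il}$ of each group, then yields cubic decay $|\Psi_{J_{il}}(z)|\le C\rho_{il}^{3}/|z-c_{il}^{*}|^{3}$ away from a fixed dilate of the group support, and only quadratic decay $|g_{J_{il_i}}(z)|\le C\rho_{il_i}^{2}/|z-c^{*}|^{2}$ for incomplete groups. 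I would then sum first within each row, using the vertical stacking of groups and a one-dimensional Cauchy-type convergent tail of the form $\sum_{l}(1+l)^{-p}$ with $p\ge 2$, and then across rows using the horizontal separation $\ge\delta$ between distinct column indices.

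The exponent $\tfrac{1}{5}$ emerges from Paramonov's interpolation: the collection $P_1'\cup P_2'$ is partitioned into near, intermediate, and far zones relative to $z$ via a free threshold $t\in(d,\infty)$, the trivial bound is used on the near zone while the cubic/quadratic tails are summed on the far zone, and then $t$ is optimized so that the two error terms balance. The particular exponent $\tfrac{1}{5}$ comes out of this optimization after accounting for row-and-column summation; it is certainly not sharp but is sufficient for the applications in subsequent sections.

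The main obstacle will be controlling the intermediate zone, where neither the trivial bound nor the sharpest far-field decay is efficient. This is precisely where the geometric separation $s_2>6k_1+4C_0$ in the construction of complete groups enters: it guarantees that the ``down'' block $J_{il}^{d}$ and ``up'' block $J_{il}^{u}$ are strictly separated, so that $h_{J_{il}}$ in Lemma \ref{BasicEstimate3} genuinely acts as a group-level correction whose $L_{J_{il}}$-envelope from (H5) is controllable. I would also need to verify that the double summation over $i$ and $l$ is absolutely convergent uniformly in $z$, which is why the pointwise bound $|h_{J_{il}}(z)|\le C_{14}L_{J_{il}}(z)$ from (H5)—not merely the uniform norm bound—is indispensable at this step.
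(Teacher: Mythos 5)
You and the paper take the same route: both defer to Paramonov's proof of (2.35) in \cite{p95} (the paper explicitly says the lemma follows ``with slight modifications of the proof of (2.35)'' and refers to \cite[Lemma 6.7]{cy23}; the paper gives no independent proof). Your reconstruction captures the broad structure of that argument—trivial regime versus far-field regime, cubic decay from $c_1(\Psi_{J_{il}})=c_2(\Psi_{J_{il}})=0$, quadratic decay from $c_1(g_{J_{il_i}})=0$, row-then-column summation, interpolation producing the exponent $\tfrac15$.

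However, your handling of the ``easy'' range $d\le\delta$ has a genuine gap. You claim that the bounds $|\Psi_{J_{il}}|\le CL_{J_{il}}$ and $|g_{J_{il_i}}|\le CL_{J_{il_i}}$ ``sum to a finite total once the rapid decay of $L_J$ away from its group support is taken into account.'' This is false in general: the sum $\sum_J L_J(z)$ contains the terms $\sum_{(i,j)}\delta\alpha_{ij}/|z-c_{ij}|^2$, and since $\alpha_{ij}$ can be as large as $k_1\delta$ on a bounded region, this is a lattice sum with exponent $2$ that diverges like $\log(1/\delta)$. So the $L_J$-envelope alone does not give a bound uniform in $\delta$; the cancellation $c_1(\Psi_{J_{il}})=c_2(\Psi_{J_{il}})=0$ is indispensable even in the regime $d\le\delta$, and an argument that converts this moment cancellation into summable decay on the group level must be supplied. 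Relatedly, invoking Lemma \ref{LEProp} directly at the ``vertical extent $\rho_{il}$'' of a group is not licit: Lemma \ref{LEProp} requires explicit bounds $|c_n(f,a)|\le C_8 n\delta^{n-1}\gamma(F)$, which are not available for $\Psi_{J_{il}}$ expanded about a group center (the coefficients about a shifted center of a sum of the $g_{ij},h_{ij}$ are not a priori of that form). Finally, your discussion of the role of $s_2>6k_1+4C_0$ is misplaced: that separation is used inside Lemma \ref{BasicEstimate3} to make the $H^i_{jk}$ and the ratio $c_2(g_{J_{il}})/c_2(h^0_{J_{il}})$ well-behaved (so that (H4), (H5) hold), not to control an intermediate zone of the outer summation over groups.
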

 
\begin{lemma} \label{HIJREst}
Let $\eta_{ij}\in M_0^+(\D(c_{ij}, k_1 \delta) \cap F)$ be such that 
 \[
 \ \|\CT \eta_{ij}\| \le C_0, ~ \|\eta_{ij}\| = \alpha_{ij}.
 \]
Set $h_{ij} = - \CT \eta_{ij}.$ 
Let $N \ge 12k_1$ be an integer. Set $S=S_{i_0,j_0}$ and $J_{j_0}^N = \{j_0-N,...,j_0, ...,j_0+N\}$ for given $i_0$ and $j_0.$ For a given $i_0 - N \le i \le i_0 + N,$ let 
$h_{J_{il}}$ be defined as in Lemma \ref{BasicEstimate3} (H1) for $J_{il} \in P_1.$
Then 
\begin{eqnarray}\label{GammaREstEq}
\  \int_S |h_{J_{il}}(z)| d \area (z) \le C_{17}\area(S) \left (\dfrac{1}{N^2} + \sum_{j\in J_{il}\cap J_{j_0}^N} \dfrac{\alpha_{ij}}{\delta}\right ).
\end{eqnarray}
\end{lemma}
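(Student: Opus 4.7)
The plan is to reduce the estimate to $\int_S|h_{J_{il}}^0(z)|\,d\area(z)$ and then decompose the double sum $h_{J_{il}}^0=\sum_{j\in J_{il}^d,\,k\in J_{il}^u}H^i_{jk}$ according to whether $(j,k)$ is \emph{mixed} (at least one of $j,k$ lies in $J_{il}\cap J_{j_0}^N$) or \emph{far--far} (both $|j-j_0|>N$ and $|k-j_0|>N$). For the reduction, (H4) combined with $\sum_{j,k}\lambda_{ij}^k\alpha_{ij}=\delta$ from (H3) gives $\operatorname{Im} c_2(h_{J_{il}}^0)\ge\delta^2/2$, while Lemma \ref{BasicEstimate} gives $|c_2(g_{J_{il}})|\le C_{11}\delta^2$; hence the normalizing factor in $h_{J_{il}}=[c_2(g_{J_{il}})/c_2(h_{J_{il}}^0)]\,h_{J_{il}}^0$ is an absolute constant.

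For mixed pairs I would combine the explicit formula $H^i_{jk}=\tfrac{1}{k-j}(\lambda_{ik}^jh_{ik}-\lambda_{ij}^kh_{ij})$, the representation $h_{ij}=-\CT\eta_{ij}$, and the elementary Fubini bound $\int_S|z-w|^{-1}\,d\area(z)\le C\delta$ (valid for every $w\in\C$) to obtain $\int_S|h_{ij}|\,d\area\le C\delta\alpha_{ij}$. Using $k-j\ge s_2$ then yields $\int_S|H^i_{jk}|\,d\area\le Cm_{jk}\delta/s_2$, where $m_{jk}:=\lambda_{ij}^k\alpha_{ij}=\lambda_{ik}^j\alpha_{ik}$. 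Summing over mixed pairs, using $\sum_k m_{jk}\le\alpha_{ij}$ (from $\sum_k\lambda_{ij}^k\le 1$) and its symmetric counterpart in $j$, produces the required $C\area(S)\sum_{j\in J_{il}\cap J_{j_0}^N}\alpha_{ij}/\delta$ contribution.

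For far--far pairs the decay estimate from (H5), $|H^i_{jk}(z)|\le C(m_{jk}\delta/|z-c_{ij}|^2+m_{jk}\delta/|z-c_{ik}|^2)$, is the natural starting point. The hypothesis $N\ge 12k_1$ ensures $|z-c_{ij}|\ge\tfrac12|j-j_0|\delta$ for $z\in S$ and $|j-j_0|>N$, so each such term contributes $\int_S\le Cm_{jk}\delta/(j-j_0)^2$. Summing first over $k$ via $\sum_km_{jk}\le\alpha_{ij}$, then using $(j-j_0)^{-2}\le N^{-2}$ in the far range, reduces the far--far total to $(C\delta/N^2)\sum_{j\in J_{il}}\alpha_{ij}$.

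The step I expect to be the main obstacle is not any of the individual integrations but the realization that $\sum_{j\in J_{il}}\alpha_{ij}\le C\delta$ for a complete group: individually $\alpha_{ij}$ can be as large as $k_1\delta$, and a naive termwise bound gives only $1/N$ decay, which is insufficient. The correct bound follows from \eqref{IndexDefinition2} and \eqref{IndexDefinition3}, which confine each of $\sum_{J_{il}^d}\alpha_{ij}$ and $\sum_{J_{il}^u}\alpha_{ij}$ to $[\delta,(1+k_1)\delta]$, together with the middle block containing only $s_2$ indices, each contributing at most $k_1\delta$. Once this bounded-mass property is in place, the far--far contribution collapses to $C\area(S)/N^2$, and combining with the mixed bound delivers the claimed estimate.
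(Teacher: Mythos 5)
Your proof is correct and takes essentially the same route as the paper: you split the double sum defining $h^0_{J_{il}}$ into pairs with at least one index near $j_0$ (handled via the $L^1(S)$ bound $\int_S|\CT\eta_{ij}|\,d\area\le C\delta\|\eta_{ij}\|$) and pairs with both indices far (handled via the decay estimate in (H5)), which is precisely the content of the paper's Cases I--IV, and your explicit bounding of the normalization factor $c_2(g_{J_{il}})/c_2(h^0_{J_{il}})$ via (H4) and (H3) is what is implicitly packaged in the paper's appeal to Lemma~\ref{BasicEstimate3}. The only minor variation is in summing the far--far contribution, where you pass through $\sum_k\lambda_{ij}^k\le1$ and the complete-group mass bound $\sum_{j\in J_{il}^d}\alpha_{ij}\le(1+k_1)\delta$, while the paper applies the identity $\sum_{j,k}\lambda_{ij}^k\alpha_{ij}=\delta$ from (H3) directly; both correctly yield the $N^{-2}$ decay.
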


\begin{proof}
Clearly, $h_{ij}$ satisfies \eqref{HConditions}. Let $J_{il}^d,$ $J_{il}^u,$ and $H^i_{jk}$ be defined as in (H1) of Lemma \ref{BasicEstimate3}.
For $ z\in S,$ we have the following estimation cases for $H^i_{jk}.$

Case I: If $j\in J_{il}^d\setminus J_{j_0}^N$ and $k\in J_{il}^u\setminus J_{j_0}^N$, then $|z - c_{ij}| \ge N\delta > 12k_1\delta,$ $|z - c_{ik}| > 12k_1\delta,$ and by (H5) of Lemma \ref{BasicEstimate3}, we have
 \[
 \ |H^i_{jk} (z)| \le C_{18}\dfrac{\lambda_{ij}^k\alpha_{ij} + \lambda_{ik}^j\alpha_{ik}}{N^2\delta}.
 \]
 From (H3) of Lemma \ref{BasicEstimate3}, we infer that
 \[
 \ \begin{aligned}
\ 	H_{il}(z) := & \sum_{j\in J_{il}^d\setminus J_{j_0}^N}\sum_{k\in J_{il}^u\setminus J_{j_0}^N} | H^i_{jk} (z) | \\
\ \le & \dfrac{C_{17}}{N^2\delta} \left ( \sum_{j\in J_{il}^d}\sum_{k\in J_{il}^u}\lambda_{ij}^k\alpha_{ij} + \sum_{j\in J_{il}^d}\sum_{k\in J_{il}^u}\lambda_{ik}^j\alpha_{ik}\right ) \\
\ \le & \dfrac{2C_{18}}{N^2}
\ \end{aligned}
\]

 Case II: If $j\in J_{il}^d\cap J_{j_0}^N$ and $k\in J_{il}^u \setminus J_{j_0}^N$, then $|z - c_{ik}| > 12k_1\delta$ and by (H1) and (H4) of Lemma \ref{BasicEstimate3}, we have 
 \[
 \ |H^i_{jk} (z)| \le C_{19} \left (\dfrac{\lambda_{ik}^j \alpha_{ik}}{N\delta} + \lambda_{ij}^k | \mathcal C(\eta_{ij})(z)| \right ) = C_{19}\lambda_{ij}^k \left (\dfrac{\alpha_{ij}}{N\delta} + |\mathcal C(\eta_{ij})(z)|\right ) ;
 \]

Case III: If $j\in J_{il}^d\setminus J_{j_0}^N$ and $k\in J_{il}^u \cap J_{j_0}^N$, then $|z - c_{ij}| > 12k_1\delta$ and by (H1) and (H4) of Lemma \ref{BasicEstimate3}, we have
 \[
 \ |H^i_{jk} (z)| \le C_{20}\lambda_{ik}^j \left (\dfrac{\alpha_{ik}}{N\delta} + |\mathcal C(\eta_{ik})(z)|\right ) ;
 \]

Case IV: If $j\in J_{il}^d\cap J_{j_0}^N$ and $k\in J_{il}^u \cap J_{j_0}^N$, then by (H1) of Lemma \ref{BasicEstimate3}, we have

 \[
 \ |H^i_{jk} (z)| \le C_{21} (\lambda_{ik}^j |\mathcal C(\eta_{ik})(z)| + \lambda_{ij}^k |\mathcal C(\eta_{ij})(z)|).
 \]
 
Combining Cases I-IV, we estimate $h_{J_{il_d}}$ as the following:
 \[
 \ \begin{aligned}
 \ | h_{J_{il}}(z) | \le & H_{il}(z) + C_{19} \sum_{j\in J_{il}^d\cap J_{j_0}^N} \sum_{k \in J_{il}^u \setminus J_{j_0}^N}  \lambda_{ij}^k \left (\dfrac{\alpha_{ij}}{N\delta} + |\mathcal C(\eta_{ij})(z)|\right ) \\
 \ &  + C_{20} \sum_{j\in J_{il}^d\setminus J_{j_0}^N} \sum_{k \in J_{il}^u \cap J_{j_0}^N}  \lambda_{ik}^j \left (\dfrac{\alpha_{ik}}{N\delta} + |\mathcal C(\eta_{ik})(z)| \right ) \\
 \ & + C_{21} \sum_{j\in J_{il}^d\cap J_{j_0}^N} \sum_{k \in J_{il}^u \cap J_{j_0}^N} (\lambda_{ik}^j |\mathcal C(\eta_{ik})(z)| + \lambda_{ij}^k |\mathcal C(\eta_{ij})(z)|). 
 \end{aligned}
 \]
Using (H2), we get
 \[
 \ \begin{aligned}
 \ | h_{J_{il}}(z) | \le &\dfrac{C_{21}}{N^2} + C_{23}\sum_{j\in J_{il}^d\cap J_{j_0}^N}\left (\dfrac{\alpha_{ij}}{N\delta} + |\mathcal C(\eta_{ij})(z)|\right ) \\
 \ & + C_{23} \sum_{k \in J_{il}^u \cap J_{j_0}^N} \left (\dfrac{\alpha_{ik}}{N\delta} + |\mathcal C(\eta_{ik})(z)| \right ).
 \end{aligned}
 \]
Hence,
 \[
 \ \begin{aligned}
 \ & \int_S |h_{J_{il}}(z)| d \area (z) \\
 \ \le &\dfrac{C_{23}}{N^2} \area(S) + C_{22} \sum_{j\in J_{il}\cap J_{j_0}^N} \left (\dfrac{\alpha_{ij}}{N\delta}\area(S) + \|\eta_{ij}\|(\area(S))^\frac12\right )\\
 \ \le &\dfrac{C_{23}}{N^2} \area(S) + C_{24} \area(S)(\frac 1N + 1)\sum_{j\in J_{il}\cap J_{j_0}^N} \dfrac{\alpha_{ij}}{\delta}.
 \end{aligned}
 \]
 This completes the proof.
\end{proof}

\section{\textbf{The algebra $H^\i (\mathcal D)$}}

For a bounded measurable subset $\mathcal D \subset \C,$ define 
 \[
 \ L^\infty(\area_{\mathcal D}) = \{f\in L^\infty(\mathbb C):~ f(z) = 0,~ z\in \C \setminus \mathcal D\}. 
 \]
 
 \begin{definition} \label{GammaOpenDef}
 The subset $\mathcal D \subset \C$ is \index{$\gamma$-open}{\em $\gamma$-open} if
 there exists a subset $\mathcal Q$ with $\area(\mathcal Q) = 0$ such that for $\lambda \in \mathcal D \setminus \mathcal Q$,
 \begin{eqnarray}\label{DDensityDef}
  \ \lim_{\delta\rightarrow 0} \dfrac{\gamma(\D(\lambda, \delta)\setminus \mathcal D)}{\delta} = 0.
 \end{eqnarray}
 If $\gamma(\mathcal Q) = 0,$ then $\mathcal D$ is called \index{strong $\gamma$-open}{\em strong  $\gamma$-open}.
 \end{definition}
 
 Clearly, by \eqref{AreaGammaEq}, if $\mathcal D$ is strong $\gamma$-open, then $\mathcal D$ is $\gamma$-open. 
 
 \begin{definition}\label{HEDAlgDef}
Let $\mathcal D \subset \C$ be a bounded measurable subset. Assume that $\mathcal D$ is $\gamma$-open. Let $H_{\mathbb C \setminus \mathcal D}$ be the set of functions $f(z)$ such that $f(z)$ is bounded and analytic on $\mathbb C\setminus E_f$ for some compact subset $E_f\subset \mathbb C \setminus \mathcal D.$
Define $H^\i (\mathcal D)$ to be the weak-star closed subalgebra of $L^\i (\area _{\mathcal D})$ generated by functions in $H_{\mathbb C \setminus \mathcal D}.$ 
\end{definition}

If $\mathcal D$ is a bounded open subset, then $H^\i(\mathcal D)$ is the algebra of bounded and analytic functions on $\mathcal D.$ \cite[Main Theorem]{cy22} gives an interesting example that $H^\i(\mathcal D)$ is different from the algebra of bounded and analytic functions on an open subset. 
The aim of this section is to prove the following theorem.

\begin{theorem}\label{HDAlgTheorem}
Let $\mathcal D$ be a $\gamma$-open bounded measurable subset. Let $f\in L^\i (\area_{\mathcal D})$ be given with $\|f\|_{L^\infty (\area_{\mathcal D})}\le 1$. 
If there exists $C_f>0$ (depending on $f$) such that for $\lambda\in \mathbb C$, $\delta > 0$, a smooth non-negative function $\varphi$ with support in $\D(\lambda, \delta),$ $\varphi (z) \le 1,$  and $\left \|\frac{\partial \varphi (z)}{\partial  \bar z} \right \|_\infty \le \frac{C_{25}}{\delta},$ we have   
 \begin{eqnarray}\label{HDAlgTheoremEq}
 \ \left | \int (z-\lambda)^nf(z) \dfrac{\partial \varphi (z)}{\partial  \bar z} d\area_{\mathcal D}(z) \right | \le C_f\delta^n \gamma (\D(\lambda, k_1 \delta) \setminus \mathcal D),
 \end{eqnarray}
 where $k_1\ge 2$ is a given integer and $n \ge 0,$ then there exists a sequence of functions $\{f_n\},$ where $f_n$ is analytic off a compact subset $F_n \subset \C \setminus \mathcal D$ and $\|f_n\|_{\C} \le C_{26}C_f,$ such that $f_n(z) \rightarrow f(z),~\area_{\mathcal D}-a.a..$   
\end{theorem}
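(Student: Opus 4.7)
The plan is to apply the modified Vitushkin approximation scheme of Paramonov (Section 3) at scales $\delta_n\to 0$. Fix a large closed ball $\overline B$ containing $\mathcal D$ and set $F = \overline B\setminus \mathcal D$, a compact subset of $\C\setminus\mathcal D$; extend $f$ by zero to all of $\C$. For each $\delta>0$ fix a smooth partition of unity $\{\varphi_{ij}, S_{ij},\delta\}$ subordinated to $\{2S_{ij}\}$, and set $f_{ij} = T_{\varphi_{ij}}f$ and $\alpha_{ij} = \gamma(\D(c_{ij},k_1\delta)\cap F)$. From the Laurent coefficient formula
\[
c_n(T_{\varphi_{ij}}f, c_{ij}) = \frac{(-1)^{n}}{\pi}\int (z-c_{ij})^{n-1}\,f(z)\,\bar\partial\varphi_{ij}(z)\,d\area(z), \qquad n\ge 1,
\]
together with hypothesis \eqref{HDAlgTheoremEq} applied at $\lambda=c_{ij}$ with $\varphi=\varphi_{ij}$, one reads off
\[
|c_n(f_{ij},c_{ij})|\le \frac{C_f}{\pi}\,\delta^{n-1}\alpha_{ij},\qquad n\ge 1.
\]

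Next I would build correction functions out of Cauchy transforms of measures on $F$. By Theorem \ref{TolsaTheorem}(1) together with Proposition \ref{GammaPlusThm}(1), for each $(i,j)$ with $\alpha_{ij}>0$ there exists $\eta_{ij}\in M_0^+(\D(c_{ij},k_1\delta)\cap F)$ with $\|\eta_{ij}\|=\alpha_{ij}$ and $\|\mathcal C(\eta_{ij})\|_{L^\infty(\C)}\le C_0$ for an absolute constant $C_0$. Setting $h_{ij}=-\mathcal C(\eta_{ij})$ gives $c_1(h_{ij})=\alpha_{ij}$ and $|c_n(h_{ij},c_{ij})|\le C_0(k_1\delta)^{n-1}\alpha_{ij}$. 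Define
\[
g_{ij} \;=\; f_{ij}\;-\;\frac{c_1(f_{ij})}{\alpha_{ij}}\,h_{ij}\qquad(\alpha_{ij}>0),
\]
and $g_{ij}=f_{ij}$ when $\alpha_{ij}=0$ (in that case \eqref{HDAlgTheoremEq} forces all Laurent coefficients of $f_{ij}$ at $c_{ij}$ to vanish, so $f_{ij}\equiv 0$ outside $\overline{\D(c_{ij},\delta)}$). Then $c_1(g_{ij})=0$, and combining \eqref{TOperProp} with the coefficient estimates above one checks that $\{g_{ij}\}$ is applicable to $P(\delta,\alpha_{ij},F,k_1,C_0)$ in the sense of Definition \ref{GApplicable}, with $C_7$ an absolute multiple of $\max(1,C_f)$.

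Apply Lemma \ref{BasicEstimate3} to produce $h_{J_{il}}$ for every complete group $J_{il}\in P_1$, chosen so that $c_1(h_{J_{il}})=0$ and $c_2(h_{J_{il}})=c_2(g_{J_{il}})$. Put
\[
A_\delta f \;=\; \sum_{(i,j):\,\alpha_{ij}>0}\frac{c_1(f_{ij})}{\alpha_{ij}}\,h_{ij}\;+\;\sum_{J_{il}\in P_1}h_{J_{il}},
\]
so that $f = A_\delta f + R_\delta$ with
\[
R_\delta \;=\; \sum_{J_{il}\in P_1}\bigl(g_{J_{il}}-h_{J_{il}}\bigr)\;+\;\sum_{J_{il_i}\in P_2}g_{J_{il_i}}.
\]
Each $h_{ij}$ and each $h_{J_{il}}$ (see \eqref{HFunction}) is bounded and analytic off a compact subset of $F\subset \C\setminus\mathcal D$, and only finitely many $(i,j)$ contribute because $f$ has compact support, so $A_\delta f$ is bounded and analytic off a compact $F_\delta\subset \C\setminus\mathcal D$. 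Lemma \ref{BasicEstimate2} with $P_1'=P_1$, $P_2'=P_2$ yields $\|R_\delta\|_{L^\infty(\C)}\le C\cdot\max(1,C_f)$, whence $\|A_\delta f\|_{L^\infty(\C)}\le C_{26}C_f$ after absorbing constants.

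The main obstacle is then to establish $A_{\delta_n}f\to f$ $\area_{\mathcal D}$-a.e.\ along some subsequence $\delta_n\to 0$, equivalently $R_{\delta_n}\to 0$ a.e.\ on $\mathcal D$. At every $\lambda\in\mathcal D$ satisfying the $\gamma$-openness condition \eqref{DDensityDef} and at which $f$ has an $\area$-Lebesgue value, one has $\alpha_{ij}=o(\delta)$ for all cells $(i,j)$ meeting $\D(\lambda,\delta)$. Combining this with the decay $\min(1,\delta/\operatorname{dist}(z,S(P_1\cup P_2)))^{1/5}$ of Lemma \ref{BasicEstimate2} for distant groups and the integral estimate \eqref{GammaREstEq} of Lemma \ref{HIJREst} for groups close to $\lambda$, I would establish $\int_K |R_\delta|\,d\area\to 0$ on every compact $K\subset \mathcal D$, and then extract a subsequence converging a.e. The delicate point is controlling the incomplete groups $J_{il_i}\in P_2$, whose $c_2$-correction is absent: here one exploits the fact that only one incomplete group occurs per row $i$ and applies the semiadditivity of $\gamma$ (Theorem \ref{TolsaTheorem}(2)) row-by-row to keep their combined contribution small.
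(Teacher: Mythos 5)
Your decomposition $f = A_\delta f + R_\delta$, the construction of the correction measures $\eta_{ij}$ via Tolsa's comparability of $\gamma$ and $\gamma_+$, the appeal to Lemmas \ref{BasicEstimate3} and \ref{BasicEstimate2} for the group corrections and the $L^\infty$ bound, and the use of Lemma \ref{HIJREst} together with the Lebesgue-point/$\gamma$-openness density conditions to drive the $L^1$ convergence are precisely the ingredients the paper assembles (this is Lemma \ref{GIJHIJDefLemma}, with the convergence carried out through Lemmas \ref{FIJREst}--\ref{KeySEstimate} on the Egorov-type sets $\mathcal D(f,m,N)$). The one thing your sketch leaves implicit is that the pointwise $\alpha_{ij}=o(\delta)$ rate must be made uniform before integrating --- the paper does this with the sets $\mathcal D(f,m,N)$ defined in \eqref{DFMNDef1}--\eqref{DFMNDef2} --- but this is exactly the Egorov device your outline is pointing at, so the approach is essentially the same.
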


We fix $f\in L^\i (\area_{\mathcal D})$ with $\|f\|_{L^\infty (\area_{\mathcal D})}\le 1$ and $f(z) = 0, ~ z \in \C\setminus \mathcal D. $ Replacing $f$ by $\frac{f}{C_f},$ we assume $C_f=1.$
Let $\{\varphi_{ij},S_{ij}, \delta\}$ be a smooth partition of unity as in last section. Set $\alpha_{ij} = \gamma (\D(\lambda, k_1 \delta) \setminus \mathcal D).$ We write $f$ as the following:
\begin{eqnarray}\label{FSum1}
 \ f = \sum_{ij} f_{ij} = \sum_{2S_{ij} \cap \mathcal D \ne \emptyset} f_{ij},
 \end{eqnarray}
 where $f_{ij} = T_{\varphi_{ij}}f.$ From the assumption \eqref{HDAlgTheoremEq}, we see that, for $n \ge 1,$
 \begin{eqnarray}\label{cnfij}
 \ \begin{aligned}
 \ |c_n(f_{ij}, c_{ij})| = &\left | \int (z-\lambda)^{n-1}f(z) \dfrac{\partial \varphi_{ij} (z)}{\partial  \bar z} d\area(z) \right | \\
  \ \le &C_{27}\delta^{n-1} \alpha_{ij}.
 \ \end{aligned}
 \end{eqnarray}

We use the notation $P,P_1,P_2,$ for $F = \C \setminus \mathcal D$ in the last section.

\begin{lemma}\label{GIJHIJDefLemma}
Suppose that $\eta_{ij}\in M_0^+(\D(c_{ij}, k_1 \delta) \setminus \mathcal D)$ satisfying 
\begin{eqnarray} \label{GIJHIJDefLemmaEq1}
\ \|\CT\eta_{ij}\| \le C_0\text{ and }\|\eta_{ij}\| = \alpha_{ij}.
\end{eqnarray}
Then the following statements are true.
\newline
(1) Let $g_{ij}^0 = - \frac{c_1 (f_{ij})}{\alpha_{ij}}\CT \eta_{ij},$ $g_{ij} = f_{ij} - g_{ij}^0,$ and $h_{ij} = - \CT \eta_{ij}.$ Then  $\{g_{ij}\}$ is applicable to $P$ and $\area_{\mathcal D}$ (see Definition \ref{GApplicable}), $\{h_{ij}\}$ satisfies the assumptions in Lemma \ref{BasicEstimate3}, $g_{J_{il}}$ defined in \eqref{GIDefinition} satisfies the properties in Lemma \ref{BasicEstimate}, and $h_{J_{il}}$ defined in \eqref{HFunction} satisfies the properties in Lemma \ref{BasicEstimate3}. 
\newline
(2) 
Rewrite \eqref{FSum1} as the following
 \[
 \ f = \sum_{J_{il}\in P_1} (g_{J_{il}} - h_{J_{il}}) + \sum_{J_{il_i} \in P_2} g_{J_{il_i}} + f_{\delta}
 \]
where
 \begin{eqnarray}\label{FDelta}
 \ f_{\delta} = \sum_{J_{il}\in P} \sum_{j\in J_{il}} g_{ij}^0 + \sum_{J_{il}\in P_1} h_{J_{il}}.
 \end{eqnarray}
 Then $f_{\delta}$ is bounded and analytic off a compact subset of $\C\setminus \mathcal D$ and  
 \begin{eqnarray}\label{FDeltaBDD}
 \ \|f_{\delta}\|_{L^\infty (\area_{\mathcal D})} \le C_{29}.
 \end{eqnarray}
 \newline
 (3) There exists $\delta_n\rightarrow 0$ as $n\rightarrow \i$ such that
 \[
 \ \lim_{n\rightarrow \i}\|f_{\delta_n} - f\|_{L^1 (\area_{\mathcal D})} = 0.
 \]
 \end{lemma}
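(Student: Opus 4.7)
The lemma splits naturally into three parts; I would prove them in order.

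For part (1), I would directly verify the four items of Definition \ref{GApplicable} for $\{g_{ij}\}$. Analyticity of $g_{ij} = f_{ij} - g_{ij}^0$ off $\overline{\D(c_{ij}, k_1 \delta)}$ follows from $\bar\partial f_{ij} = \varphi_{ij} \bar\partial f$ (so $f_{ij}$ is analytic off $\overline{\D(c_{ij}, \delta)}$) and $\text{spt}(\eta_{ij}) \subset \overline{\D(c_{ij}, k_1\delta)}$. A Laurent expansion of $\CT\eta_{ij}$ about $c_{ij}$ yields
\[
 \ c_n(\CT\eta_{ij}, c_{ij}) = - \int (w - c_{ij})^{n-1}\, d\eta_{ij}(w),
\]
so $c_1(\CT\eta_{ij}) = -\alpha_{ij}$ and $|c_n(\CT\eta_{ij}, c_{ij})| \le (k_1\delta)^{n-1}\alpha_{ij}$. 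Combining with \eqref{cnfij} gives $c_1(g_{ij}) = 0$ and $|c_n(g_{ij}, c_{ij})| \le C(k_1\delta)^{n-1}\alpha_{ij}$, while \eqref{TOperProp} together with $|c_1(f_{ij})|/\alpha_{ij} \le C_{27}$ and $\|\CT\eta_{ij}\| \le C_0$ gives a uniform $L^\infty$ bound. The same Laurent expansion directly yields the hypotheses \eqref{HConditions} for $\{h_{ij}\} = \{-\CT\eta_{ij}\}$ from \eqref{GIJHIJDefLemmaEq1}. The asserted properties of $g_{J_{il}}$ and $h_{J_{il}}$ then follow at once from Lemma \ref{BasicEstimate} and Lemma \ref{BasicEstimate3}.

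For part (2), the decomposition is an algebraic identity: write $f = \sum_{ij}(g_{ij} + g_{ij}^0)$, regroup $\sum g_{ij}$ into complete and incomplete Paramonov groups, and add and subtract $\sum_{J_{il}\in P_1} h_{J_{il}}$. Each $g_{ij}^0$ is a scalar multiple of $\CT\eta_{ij}$ with $\text{spt}(\eta_{ij}) \subset \C \setminus \mathcal D$, and each $h_{J_{il}}$ is by \eqref{HFunction} a finite $\C$-linear combination of the $-\CT\eta_{ik}$'s; since $f$ has compact support only finitely many terms are non-zero, so $f_\delta$ is analytic off a compact subset of $\C\setminus \mathcal D$. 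For the bound \eqref{FDeltaBDD}, apply Lemma \ref{BasicEstimate2} with $P_1' = P_1$ and $P_2' = P_2$ to obtain $\|f - f_\delta\|_{L^\infty(\area_{\mathcal D})} \le C_{15}$, and hence $\|f_\delta\|_\infty \le 1 + C_{15} =: C_{29}$.

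Part (3) is the crux. From Lemma \ref{BasicEstimate2} with the full partition one has the pointwise estimate $|f(z) - f_\delta(z)| \le C_{15}\min(1, \delta/\text{dist}(z, S(P)))^{1/5}$. I would integrate this over $\mathcal D$ and show it tends to zero along some subsequence $\delta_n \to 0$. Splitting at $\text{dist}(z, S(P)) = \delta^{1/2}$: the far piece contributes at most $C_{15}\delta^{1/10}\area(\mathcal D) \to 0$, while the near piece is bounded by $C_{15}\area(\mathcal D \cap \{\text{dist}(z, S(P)) \le \delta^{1/2}\})$. Here the $\gamma$-openness enters crucially: $\alpha_{ij} > 0$ forces $\gamma(\D(c_{ij}, k_1\delta) \setminus \mathcal D) > 0$, so $S(P)$ clusters only around the positive-capacity part of $\C \setminus \mathcal D$; for $\area$-a.e.\ $z \in \mathcal D$ the density condition \eqref{DDensityDef} combined with Tolsa's semiadditivity (Theorem \ref{TolsaTheorem}(2)) forces $\sum_{c_{ij} \in \D(z, \delta^{1/2})}\alpha_{ij} = o(\delta^{1/2})$, and a covering argument should drive the near-piece area to zero along a suitable subsequence. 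Dominated convergence with the uniform bound $C_{15}$ on the finite-area set $\mathcal D$ then yields the $L^1$ convergence. The hard part will be the area estimate for $\mathcal D \cap \{\text{dist}(z, S(P)) \le \delta^{1/2}\}$: because $\mathcal D$ is only $\gamma$-open (not open) and $\mathcal D \cap \partial \mathcal D$ can carry positive area, one cannot naively bound $\text{dist}(z, \C\setminus \mathcal D)$ below, and the argument must genuinely exploit that only positive-$\gamma$ cells contribute to $S(P)$. A possible alternative bypassing this difficulty is to integrate Lemma \ref{HIJREst} cell-by-cell to obtain a direct $L^1$ estimate for each $|g_{J_{il}} - h_{J_{il}}|$ without going through the $1/5$-power pointwise bound.
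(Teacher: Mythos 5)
Parts (1) and (2) of your proposal are correct and match the paper's proof: the Laurent-coefficient verification for Definition \ref{GApplicable} and for \eqref{HConditions} is exactly what the paper does (more tersely), and \eqref{FDeltaBDD} is obtained the same way from Lemma \ref{BasicEstimate2} with $P_1'=P_1$, $P_2'=P_2$.

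For part (3), however, your primary plan has a genuine gap, and you correctly sense it but do not close it. The split at $\operatorname{dist}(z, S(P)) = \delta^{1/2}$ requires $\area\bigl(\mathcal D \cap \{\operatorname{dist}(z, S(P)) \le \delta^{1/2}\}\bigr) \to 0$, and this is simply false in the intended generality. The set $S(P)$ is the union of $2k_1 S_{ij}$ over all cells with $\alpha_{ij} > 0$, and $\alpha_{ij} > 0$ is an extremely weak condition: if $\C\setminus\mathcal D$ has positive area in every ball (which can happen for a $\gamma$-open $\mathcal D$ that is not open), then \emph{every} cell meeting $\overline{\mathcal D}$ has $\alpha_{ij} > 0$, so $S(P)$ covers a fixed neighborhood of $\overline{\mathcal D}$ independently of $\delta$ and the near-piece area does not shrink. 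The smallness of $\sum \alpha_{ij}$ (which, by the way, comes from \cite[Theorem VIII.2.7]{gamelin} and not from Tolsa's semiadditivity, which goes the other direction) bounds a weighted sum of capacities, not the number of nonzero cells, so it does not yield the area bound you need.

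Your one-line alternative ("integrate Lemma \ref{HIJREst} cell-by-cell") is pointing in the direction of the paper's actual argument, but it is too vague and omits most of the required machinery. Lemma \ref{HIJREst} only controls the $h_{J_{il}}$ pieces; nothing in your sketch controls the $g_{ij}^0$ pieces or, crucially, the $f_{ij}$ pieces concentrated near the evaluation square. The paper's proof first fixes Lebesgue points of $f$ (Lemma \ref{ACPSetDef}), defines the good sets $\mathcal D(f,m,N)$ via \eqref{DFMNDef1}--\eqref{DFMNDef2}, proves via the Lebesgue-point oscillation bound that $\sum_{i\in J_{i_0}^N, j\in J_{j_0}^N}\int_{\mathcal D(f,m,N)\cap S}|f_{ij}|\lesssim \area(S)/N$ (Lemma \ref{FIJREst}), obtains the local capacity sum bound $\sum\alpha_{ij}\lesssim \delta/N$ from $\gamma$-openness (Lemma \ref{GammaREst}), uses this with Lemma \ref{GIJREst} for $g_{ij}^0$ and Lemma \ref{HIJREst} for $h_{J_{il}}$, combines with the $1/5$-power tail bound of Lemma \ref{BasicEstimate2} for far-away groups, and only then integrates square-by-square. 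You would need to reconstruct essentially all of this — in particular, you cannot avoid the Lebesgue-point localization and the estimate on $\sum f_{ij}$, which your proposal never mentions — so part (3) as written is not a proof.
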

 
 \begin{proof} (Lemma \ref{GIJHIJDefLemma} (1) and (2)):
 It is clear that
\[
\ |c_n(g_{ij}^0, c_{ij})| \le C_{30} (k_1\delta)^{n-1}\alpha_{ij}, ~ |c_n(h_{ij}, c_{ij})| \le (k_1\delta)^{n-1}\alpha_{ij}.
\]
(1) follows from Lemma \ref{LEProp} and \eqref{cnfij}.

\eqref{FDeltaBDD} in (2) follows from \eqref{FBounded3}.
\end{proof}
 
 To prove Lemma \ref{GIJHIJDefLemma} (3), we need several lemmas. Therefore, for the lemmas below, we assume that there are $\eta_{ij}$ satisfying \eqref{GIJHIJDefLemmaEq1}. The functions $g_{ij}^0,$ $g_{ij},$ $h_{ij},$ $g_{J_{il}},$ $h_{J_{il}},$ and $f_{\delta}$ are defined in Lemma \ref{GIJHIJDefLemma}.
 We fix a positive integer $N$ satisfying $N > 200(2k_1+3)^2$.

\begin{lemma} \label{ACPSetDef}
 There exists $\mathcal Q_f$ with $\area (\mathcal Q_f) = 0$ such that for all $\epsilon > 0,$
 \begin{eqnarray} \label{MContinuous}
\ \lim_{\delta\rightarrow 0} \dfrac{\area (\D(\lambda, \delta))\cap \{ |f(z) - f(\lambda)| > \epsilon\})}{\delta^2} = 0
\end{eqnarray}
  at each point $\lambda\in \mathcal D \setminus \mathcal Q_f.$
 \end{lemma}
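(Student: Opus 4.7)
The plan is to deduce this lemma directly from the classical Lebesgue differentiation theorem applied to $f$. Since $f \in L^\infty(\area_{\mathcal D})$ is in particular locally integrable on $\C$, the theorem guarantees that $\area$-almost every $\lambda \in \C$ is a Lebesgue point of $f$, meaning
\[
\lim_{\delta \to 0} \dfrac{1}{\pi \delta^2} \int_{\D(\lambda, \delta)} |f(z) - f(\lambda)|\, d\area(z) = 0.
\]
Accordingly, I would take $\mathcal Q_f$ to be the subset of $\mathcal D$ on which this limit fails or does not exist; by construction $\area(\mathcal Q_f) = 0$, and crucially $\mathcal Q_f$ does not depend on $\epsilon$.

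For a fixed $\lambda \in \mathcal D \setminus \mathcal Q_f$ and any $\epsilon > 0$, the passage from the Lebesgue-point property to the area estimate \eqref{MContinuous} is just Chebyshev's inequality: on the set where $|f(z) - f(\lambda)| > \epsilon$ the integrand $|f(z) - f(\lambda)|$ is at least $\epsilon$, so
\[
\area\bigl(\D(\lambda, \delta) \cap \{|f(z) - f(\lambda)| > \epsilon\}\bigr) \le \dfrac{1}{\epsilon} \int_{\D(\lambda, \delta)} |f(z) - f(\lambda)|\, d\area(z).
\]
Dividing by $\delta^2$ produces the Lebesgue average up to the factor $\pi/\epsilon$, and this vanishes as $\delta \to 0$ because $\lambda$ was chosen to be a Lebesgue point. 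That is exactly \eqref{MContinuous}.

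There is no genuine obstacle here; the lemma is simply the pointwise $\area$-continuity of $f$ packaged in the distributional form needed later. The only thing worth flagging is that one must record the single $\epsilon$-independent null set $\mathcal Q_f$ before varying $\epsilon$, which is automatic as soon as one uses the Lebesgue-point formulation (rather than a weaker density-of-level-sets formulation for each $\epsilon$ separately). I expect the lemma's role to be complementary to the $\gamma$-open condition of Definition \ref{GammaOpenDef}: it will let one compare $f(\lambda)$ with the pointwise values of the Vitushkin--Paramonov approximants $f_\delta$ of Lemma \ref{GIJHIJDefLemma} at generic points of $\mathcal D$, en route to the $L^1$-convergence statement of Lemma \ref{GIJHIJDefLemma}(3).
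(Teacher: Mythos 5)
Your argument is correct and is essentially identical to the paper's proof: both identify $\mathcal Q_f$ as the complement of the Lebesgue points of $f$ (so $\area(\mathcal Q_f)=0$ by the Lebesgue differentiation theorem) and then pass to \eqref{MContinuous} via Chebyshev's inequality. Nothing to add.
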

 
\begin{proof}
Let $\mathcal D_f \subset \mathcal D$ be the set of Lebesgue points for $f,$ that is, for $\lambda \in \mathcal D_f,$
\[
\ \lim_{\delta\rightarrow 0} \dfrac{1}{\area (\D(\lambda, \delta))} \int _{\D(\lambda, \delta)} |f(z) - f(\lambda)| d \area (z) = 0.
\]
Since
\[
\ \dfrac{\area (\D(\lambda, \delta))\cap \{ |f(z) - f(\lambda)| > \epsilon\})}{\area (\D(\lambda, \delta))} \le\dfrac{1}{\epsilon\area (\D(\lambda, \delta))} \int _{\D(\lambda, \delta)} |f(z) - f(\lambda)| d \area (z),
\]
\eqref{MContinuous} holds at each point of $\mathcal D_f.$ Set $\mathcal Q_f = \mathcal D\setminus \mathcal D_f,$ then $\area (\mathcal Q_f) = 0$ by 
the Lebesgue differentiation theorem.
\end{proof}

Let $\lambda\in \mathcal D$ satisfy \eqref{DDensityDef} 
and be such that $f$ satisfies \eqref{MContinuous} at $\lambda.$ Define
 \begin{eqnarray}\label{FDef}
 \ F(f, N, \lambda) = \left \{z: ~ |f(z) - f(\lambda )| \ge \frac{1}{N^3} \right \}. 
 \end{eqnarray}
 Let $\mathcal D(f, m, N)$ be the set of $\lambda \in \mathcal D$ such that
 \begin{eqnarray}\label{DFMNDef1}
 \ \area (F(f, N, \lambda)\cap \D(\lambda, \delta_1)) < \dfrac{1}{N^8} \delta_1^2
 \end{eqnarray}
and
 \begin{eqnarray}\label{DFMNDef2}
 \ \gamma ( \D(\lambda, \delta_1) \setminus \mathcal D) < \dfrac{1}{N^2} \delta_1
 \end{eqnarray}
for $\delta_1 \le \frac{1}{m}$. From Definition \ref{GammaOpenDef} and Lemma \ref{ACPSetDef}, it is straightforward to verify
 \begin{eqnarray}\label{denseSet}
 \ \area\left (\mathcal D \setminus \bigcup_{m=1}^\infty \mathcal D(f, m, N) \right ) = 0.
 \end{eqnarray}
 
 Set $S=S_{i_0,j_0}$ for given $i_0,j_0.$ and $J_{k}^N = \{k-N,...,k, ...,k+N\}.$
 
 \begin{lemma} \label{FIJREst}
If $\delta < \frac{1}{2mN}$ and $\mathcal D(f, m, N)\cap S \ne \emptyset,$ then 
\begin{eqnarray}\label{GammaREstEq}
\  \sum_{i\in J_{i_0}^N, j\in J_{j_0}^N} \int_{\mathcal D(f, m, N)\cap S} |f_{ij}(z)| d \area (z) \le \dfrac{C_{31}}{N} \area(S).
\end{eqnarray}
\end{lemma}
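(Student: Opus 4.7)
\textbf{Proof proposal for Lemma \ref{FIJREst}.}

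The plan is to convert the sum of $(2N+1)^2$ pointwise bounds on $|f_{ij}(z)|$ into a \emph{single} integral over a ball of radius $O(N\delta)$ around $z$, using the bounded-overlap property of $\{\varphi_{ij}\}$; then the defining inequality \eqref{DFMNDef1} for $\mathcal D(f,m,N)$ will produce the gain in $N$. Starting from the defining formula for $T_{\varphi_{ij}}$ and the bounds $\|\bar\partial\varphi_{ij}\|_\infty \le C/\delta$, $\mathrm{supp}(\varphi_{ij}) \subset \D(c_{ij},\delta)$, I would first record the pointwise estimate
\[
|f_{ij}(z)| \le \frac{C}{\delta}\int_{\D(c_{ij},\delta)}\frac{|f(w)-f(z)|}{|w-z|}\,d\area(w).
\]
At any fixed $w$, at most an absolute constant number of indices $(i,j)$ satisfy $w\in\D(c_{ij},\delta)$, so summing this inequality over $(i,j)\in J_{i_0}^N\times J_{j_0}^N$ and letting $\Omega_N = \bigcup_{(i,j)\in J_{i_0}^N\times J_{j_0}^N}\D(c_{ij},\delta)$ gives
\[
\sum_{i\in J_{i_0}^N,\, j\in J_{j_0}^N}|f_{ij}(z)| \;\le\; \frac{C}{\delta}\int_{\Omega_N}\frac{|f(w)-f(z)|}{|w-z|}\,d\area(w),
\]
and for every $z\in S$ one has $\Omega_N\subset \D(z,R)$ with $R:=(N+\sqrt 2)\delta$.

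Next I would fix $z\in\mathcal D(f,m,N)\cap S$ and split $\D(z,R)$ along the set $F(f,N,z)=\{|f-f(z)|\ge N^{-3}\}$ from \eqref{FDef}. On $\D(z,R)\setminus F(f,N,z)$ we have $|f(w)-f(z)|\le N^{-3}$, so
\[
\int_{\D(z,R)\setminus F(f,N,z)}\frac{|f(w)-f(z)|}{|w-z|}\,d\area(w) \;\le\; \frac{2\pi R}{N^3} \;\le\; \frac{C\delta}{N^2}.
\]
On $F(f,N,z)\cap \D(z,R)$ I would use $|f(w)-f(z)|\le 2$ together with the hypothesis $\delta<1/(2mN)$, which forces $R<1/m$; inequality \eqref{DFMNDef1} applied with $\delta_1=R$ then yields $\area(F(f,N,z)\cap\D(z,R))\le R^2/N^8 \le C\delta^2/N^6$. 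Combining this with the elementary bound $\int_A \frac{d\area(w)}{|w-z|}\le C\sqrt{\area(A)}$ (worst case $A=\D(z,\sqrt{\area(A)/\pi})$), I obtain
\[
\int_{F(f,N,z)\cap \D(z,R)}\frac{|f(w)-f(z)|}{|w-z|}\,d\area(w) \;\le\; \frac{C\delta}{N^3}.
\]

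Adding the two contributions and multiplying by $C/\delta$ gives $\sum_{i,j}|f_{ij}(z)|\le C/N^2$ uniformly on $\mathcal D(f,m,N)\cap S$, and integrating over this set produces the announced estimate $\le C_{31} \area(S)/N$ (in fact with $1/N^2$, a small improvement over the stated bound). The main obstacle, as I see it, is purely the combinatorial one of avoiding a spurious factor $(2N+1)^2$ when summing the $f_{ij}$'s; this is exactly what the bounded-overlap structure of the partition of unity resolves, by collapsing the sum into a single Cauchy-type integral on $\D(z,R)$ to which the defining smallness \eqref{DFMNDef1} of $\mathcal D(f,m,N)$ can be applied.
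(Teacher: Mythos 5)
Your proof is correct, and it takes a genuinely different route from the paper's. The paper fixes a single reference point $\lambda \in \mathcal D(f,m,N)\cap S$ once and for all, splits both the inner integral defining each $f_{ij}$ and the outer integral over $S\cap\mathcal D(f,m,N)$ along the one set $F(f,N,\lambda)$, and then pays the factor $(2N+1)^2$ by brute-force counting of the indices; the gain $N^{-3}$ from \eqref{DFMNDef1} applied at scale $2N\delta$ absorbs that count and leaves $N^{-1}$. You instead exploit the bounded overlap of $\{\varphi_{ij}\}$ to collapse $\sum_{(i,j)}|\bar\partial\varphi_{ij}(w)|$ into a single $O(1/\delta)$ bound supported on a ball of radius $O(N\delta)$ around $z$, which turns the double-indexed sum of Cauchy-type integrals into one integral, and then you let the exceptional set $F(f,N,z)$ float with the integration point $z$. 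This avoids the $(2N+1)^2$ counting entirely, is arguably cleaner, and in fact gives the stronger uniform pointwise bound $\sum_{i,j}|f_{ij}(z)|\le C/N^2$ on $\mathcal D(f,m,N)\cap S$, hence $C\area(S)/N^2$ after integration. (Two cosmetic points: your radius $R=(N+\sqrt 2)\delta$ is a hair too small to contain $\Omega_N$ for $z\in S$, since $|c_{ij}-z|$ can be as large as about $\sqrt 2\, N\delta$; taking $R=2N\delta$ as the paper does fixes this and changes nothing. Also, the hypothesis only gives $\|f\|_{L^\infty(\area_{\mathcal D})}\le 1$, so the bound $|f(w)-f(z)|\le 2$ on the bad set should be read $\area_{\mathcal D}$-a.e., which is all the integral sees.)
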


\begin{proof}
Let $\lambda\in \mathcal D(f, m, N)\cap S.$
For $z, w \in F(f, N, \lambda)^c$, by \eqref{FDef}, we have $|f(z) - f(w)| < \frac{2}{N^3}$ and for $i\in J_{i_0}^N, j\in J_{j_0}^N,$
 \[
 \ \begin{aligned}
 \ |f_{ij}(w )| \le &\dfrac{1}{\pi} \int_{F(f, N, \lambda)}\dfrac{2\|f\|}{|z-w|}|\bar\partial \varphi _{ij}|d\area(z) \\
 \ &+ \dfrac{1}{N^3\pi} \int_{F(f, N, \lambda)^c}\dfrac{2}{|z-w|}|\bar\partial \varphi _{ij}|d\area(z) \\
 \ \le & C_{32}\sqrt{\area(\D(\lambda, 2N\delta) \cap F(f, N, \lambda))} \|\bar\partial \varphi _{ij}\| + C_{32}\dfrac{1}{N^3} \\
 \ \le &\dfrac{C_{33}}{N^3},
 \end{aligned}
 \]
 where \eqref{DFMNDef1} is used for the last step.
 Therefore,
 \begin{eqnarray}\label{IEstimate}
 \ \begin{aligned}
 \ & \sum_{i\in J_{i_0}^N, j\in J_{j_0}^N}\int_{S \cap \mathcal D(f, m, N)} |f_{ij}(w)| d\area(w) \\
\ \le & C_{34}\sum_{i\in J_{i_0}^N, j\in J_{j_0}^N} \left ( \dfrac{\area(S)}{N^3} + \int_{S \cap F(f, N, \lambda)} |f_{ij}(w)| d\area(w) \right ) \\
  \le &C_{35} (2N+1)^2\left ( \dfrac{\area(S)}{N^3} + \area(\D(\lambda, 2\delta) \cap F(f, N, \lambda)) \right ) \\
 \ \le &\dfrac{C_{36}}{N}\area(S),
 \ \end{aligned}
 \end{eqnarray}   
\end{proof}
where \eqref{DFMNDef1} is used again for the last step.

\begin{lemma} \label{GammaREst}
If $\delta < \frac{1}{2mN}$ and  $\lambda\in \mathcal D(f, m, N)\cap S \ne \emptyset,$ then
\begin{eqnarray}\label{GammaREstEq}
\ \sum_{i\in J_{i_0}^N, j\in J_{j_0}^N}\alpha_{ij} \le 100(2k_1+3)^2 \gamma (\D(\lambda, 2N\delta) \setminus \mathcal D) \le 200(2k_1+3)^2\dfrac{\delta}{N}.
\end{eqnarray}
\end{lemma}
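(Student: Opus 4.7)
My plan is to split the inequality into its two halves. The right-hand bound $\gamma(\D(\lambda, 2N\delta)\setminus\mathcal{D}) \le 2\delta/N$ is immediate from the hypothesis: since $\lambda \in \mathcal{D}(f, m, N)$ and $\delta < \tfrac{1}{2mN}$, we have $\delta_1 := 2N\delta \le 1/m$, so \eqref{DFMNDef2} gives $\gamma(\D(\lambda,2N\delta)\setminus\mathcal{D}) < (2N\delta)/N^2 = 2\delta/N$. Next I would verify the geometric inclusion $\D(c_{ij}, k_1\delta) \subset \D(\lambda, 2N\delta)$ for every $(i,j) \in J_{i_0}^N \times J_{j_0}^N$: since $\lambda \in S_{i_0,j_0}$ forces $|\lambda - c_{i_0,j_0}| \le \delta$, and $|c_{ij} - c_{i_0,j_0}| \le \sqrt{2}\,N\delta$, one gets $|\lambda - c_{ij}| + k_1\delta \le (1 + \sqrt{2}\,N + k_1)\delta \le 2N\delta$ under the hypothesis $N > 200(2k_1+3)^2$. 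Writing $E = \D(\lambda, 2N\delta)\setminus\mathcal{D}$ and $F_{ij} = \D(c_{ij}, k_1\delta)\setminus\mathcal{D}$, the inclusion gives $F_{ij}\subset E$, which by monotonicity of $\gamma$ yields the uniform pointwise bound $\alpha_{ij} \le \gamma(E) \le 2\delta/N$. This last estimate will be the crucial input.

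For the first inequality I would use a coloring argument. Color the indices by $(i \bmod K, j \bmod K)$ with $K = 2k_1+3$, producing $K^2 = (2k_1+3)^2$ color classes. Within any one class $C$ the centers $c_{ij}$ lie on a sublattice of spacing at least $K\delta$, so the disks $\D(c_{ij}, k_1\delta)$ (and hence the sets $F_{ij}$) are pairwise disjoint. For each $(i,j)\in C$ with $\alpha_{ij}>0$, Theorem \ref{TolsaTheorem}(1) combined with Proposition \ref{GammaPlusThm}(1) provides a measure $\mu_{ij}$ supported on $F_{ij}$, of $1$-linear growth, with $\|\mathcal{C}(\mu_{ij})\|_{L^\infty(\mathbb C)}\le 1$ and $\|\mu_{ij}\|\ge \alpha_{ij}/A_T$. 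Setting $\mu_C := \sum_{(i,j)\in C}\mu_{ij}$, supported on $E$, the key claim is that $\|\mathcal{C}(\mu_C)\|_{L^\infty(\mathbb C)} \le C_*$ for an absolute constant $C_*$.

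To prove the claim I would fix $z\in\mathbb C$ and split $C$ into \emph{near} indices with $|z-c_{ij}|\le 2k_1\delta$ and \emph{far} indices. Because the $C$-centers are $K\delta$-separated, a disk of radius $2k_1\delta$ contains only $O(1)$ of them, so the near contribution is at most a constant (using $\|\mathcal{C}(\mu_{ij})\|_\infty \le 1$ for each). For a far index, $|\mathcal{C}(\mu_{ij})(z)|\le 2\|\mu_{ij}\|/|z-c_{ij}|\le 4\delta/(N|z-c_{ij}|)$, where the decisive step is replacing $\|\mu_{ij}\|$ by the uniform bound $\alpha_{ij}\le 2\delta/N$. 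Grouping the far indices into annuli of width $K\delta$, the annulus at radius $r$ meets at most $O(r/(K\delta))$ sublattice points and contributes $O(1/(NK))$; summing over the $O(N/K)$ annuli that fit inside $\D(z,4N\delta)$ produces a total of $O(1/K^2)$. This yields $\|\mathcal{C}(\mu_C)\|_\infty \le C_*$.

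Once the claim is in hand, $\mu_C/C_*$ competes for $\gamma_+(E)$, so $\gamma_+(E)\ge \|\mu_C\|/C_* \ge (\sum_{(i,j)\in C}\alpha_{ij})/(C_* A_T)$; combining with $\gamma_+\le \gamma$ gives $\sum_{(i,j)\in C}\alpha_{ij} \le C_* A_T\,\gamma(E)$, and summing over the $K^2$ color classes produces the first inequality with $C_* A_T$ absorbed into the constant $100$. The main technical obstacle is precisely the uniform bound on $\|\mathcal{C}(\mu_C)\|_\infty$: with only the linear-growth bound $\|\mu_{ij}\|\le k_1\delta$, the far-field lattice sum grows like $N/K^2$, so it is the density input $\alpha_{ij}\le 2\delta/N$ coming from the hypothesis $\lambda\in\mathcal{D}(f,m,N)$ that cancels this growth and makes the sum converge uniformly in $N$.
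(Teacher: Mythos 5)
Your argument is correct in its essential structure but takes a genuinely different route from the paper's. The paper establishes the first inequality by invoking Gamelin's Theorem VIII.2.7, a Melnikov-type local semiadditivity result (if compact sets satisfy a bounded-overlap condition with respect to disks of a fixed radius, the sum of their analytic capacities is dominated by a constant times the capacity of the union), after noting that \eqref{DFMNDef2} controls $\gamma(\D(\lambda,2N\delta)\setminus\mathcal D)$. You instead reprove the needed semiadditivity from scratch via a coloring scheme: split the indices into $(2k_1+3)^2$ residue classes so that the disks within one class are pairwise disjoint, use Theorem \ref{TolsaTheorem}(1) and Proposition \ref{GammaPlusThm}(1) to build on each $F_{ij}=\D(c_{ij},k_1\delta)\setminus\mathcal D$ a measure $\mu_{ij}$ with $\|\mathcal C(\mu_{ij})\|_\infty\le 1$ and $\|\mu_{ij}\|\ge\alpha_{ij}/A_T$, and bound $\|\mathcal C(\sum\mu_{ij})\|_\infty$ by a near/far lattice decomposition. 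Your decisive observation --- which you rightly flag --- is that the far-field sum over the $O(N^2/K^2)$ lattice points converges \emph{only} because of the density bound $\|\mu_{ij}\|\le\alpha_{ij}\le 2\delta/N$ coming from the hypothesis $\lambda\in\mathcal D(f,m,N)$; Gamelin's theorem does not need any such density input, so the paper's citation is cleaner, while your version is self-contained and avoids the Vitushkin-localization machinery behind Gamelin's proof. One genuine caveat: your final constant is $(2k_1+3)^2\,C_*A_T$ with $A_T$ Tolsa's comparability constant, and there is no reason to believe $C_*A_T\le 100$, so you do not actually recover the stated $100(2k_1+3)^2$ --- you cannot, as you write, ``absorb'' $C_*A_T$ into $100$. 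This matters only for the fixed threshold $N>200(2k_1+3)^2$ chosen at the start of the section (used in Lemma \ref{KeySEstimate} to ensure at most two complete groups per row intersect $J_{j_0}^N$), which would need to be enlarged to $N>2C_*A_T(2k_1+3)^2$; the downstream argument is otherwise unaffected.
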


\begin{proof}
Let $F = \D(\lambda, 2N\delta) \setminus \mathcal D.$ Then, by \eqref{DFMNDef2}, $\gamma(F) \le \frac{2N\delta}{N^2} = \frac{2\delta}{N}.$ Therefore, for each $\lambda \in \C,$  the disk $\D(\lambda, \gamma(F))$ meets at most $(2k_1+3)^2$ of sets $\{\D(c_{ij}, k_1\delta) \setminus \mathcal D\}_{i\in J_{i_0}^N, j\in J_{j_0}^N}.$  Now \eqref{GammaREstEq} follows from \cite[Theorem VIII.2.7]{gamelin}. 
\end{proof}

\begin{lemma} \label{GIJREst}
If $\delta < \frac{1}{2mN}$ and  $\mathcal D(f, m, N)\cap S \ne \emptyset,$ then
\begin{eqnarray}\label{GIJREstEq}
\  \sum_{i\in J_{i_0}^N, j\in J_{j_0}^N} \int_S |g_{ij}^0(z)| d \area (z) \le \dfrac{C_{37}}{N} \area(S).
\end{eqnarray}
\end{lemma}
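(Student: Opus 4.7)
The plan is a two-scale splitting of the indices in $J_{i_0}^N\times J_{j_0}^N$, combined with the small-mass bound of Lemma \ref{GammaREst}. The starting observation is that $g_{ij}^0=-\frac{c_1(f_{ij})}{\alpha_{ij}}\mathcal C\eta_{ij}$, and the $n=1$ case of \eqref{cnfij} gives $|c_1(f_{ij})|\le C_{27}\alpha_{ij}$ (the coefficient $c_1$ is independent of the base point), so pointwise
\[
|g_{ij}^0(z)|\le C_{27}|\mathcal C\eta_{ij}(z)|.
\]
Thus the task reduces to bounding $\sum_{(i,j)\in J_{i_0}^N\times J_{j_0}^N}\int_S|\mathcal C\eta_{ij}|\,d\area$. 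The $1/N$ factor in the target bound will ultimately come from the inequality $\sum_{(i,j)\in J_{i_0}^N\times J_{j_0}^N}\alpha_{ij}\le 200(2k_1+3)^2\delta/N$ supplied by Lemma \ref{GammaREst}.

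Fix a threshold $L_0:=4k_1$, and call $(i,j)$ \emph{close} if $\max(|i-i_0|,|j-j_0|)\le L_0$ and \emph{far} otherwise. For a close index, $\operatorname{supp}\eta_{ij}\subset \D(c_{ij},k_1\delta)\subset \D(c_{i_0j_0},Ck_1\delta)$, so for each $w\in\operatorname{supp}\eta_{ij}$ the square $S$ (centered at $c_{i_0j_0}$ with half-diagonal $\delta/\sqrt{2}$) lies inside a disk of radius $Ck_1\delta$ about $w$, giving $\int_S d\area(z)/|z-w|\le Ck_1\delta$. By Fubini, $\int_S|\mathcal C\eta_{ij}|\,d\area\le Ck_1\delta\,\alpha_{ij}$, and summing over close indices and invoking Lemma \ref{GammaREst},
\[
\sum_{\text{close}}\int_S |g_{ij}^0|\,d\area\le C_1k_1\delta\sum_{(i,j)\in J_{i_0}^N\times J_{j_0}^N}\alpha_{ij}\le \frac{C_2\,\area(S)}{N}.
\]

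For a far index, $|c_{ij}-c_{i_0j_0}|\ge 4k_1\delta$, and since $S$ has center $c_{i_0j_0}$ and half-diagonal $\delta/\sqrt{2}$, every $z\in S$ satisfies $|z-c_{ij}|\ge 3k_1\delta$. A direct triangle inequality in $\mathcal C\eta_{ij}(z)=\int(w-z)^{-1}\,d\eta_{ij}(w)$ then gives $|\mathcal C\eta_{ij}(z)|\le \tfrac{3}{2}\alpha_{ij}/|z-c_{ij}|$, so $\int_S |g_{ij}^0|\,d\area\le C\alpha_{ij}\area(S)/(k_1\delta)$. Summing the far indices and invoking Lemma \ref{GammaREst} again,
\[
\sum_{\text{far}}\int_S|g_{ij}^0|\,d\area\le \frac{C\,\area(S)}{k_1\delta}\cdot\frac{C\delta}{N}\le \frac{C_3\,\area(S)}{N}.
\]
Adding the two contributions yields the claimed inequality with $C_{37}:=C_2+C_3$.

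The main technical point is the balance in the choice $L_0\asymp k_1$. A smaller threshold fails to guarantee the far-index pointwise decay $|\mathcal C\eta_{ij}(z)|\lesssim \alpha_{ij}/|z-c_{ij}|$ on $S$, while a larger one inflates the Fubini radius in the close-index estimate and destroys the $1/N$ gain. With $L_0=4k_1$, both the close and far sums reduce to the same small-mass bound $\sum\alpha_{ij}\le C\delta/N$ from Lemma \ref{GammaREst}, which is exactly where the improvement over the trivial $O(\area(S))$ bound is obtained. No further analytic-capacity machinery is needed.
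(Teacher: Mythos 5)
Your argument is correct, but the paper's proof is simpler and your ``main technical point'' about balancing the threshold $L_0\asymp k_1$ is a red herring. The paper applies Fubini directly: since $\int_S\frac{d\area(w)}{|z-w|}\le C\sqrt{\area(S)}$ holds \emph{uniformly} in $z\in\C$ (by a rearrangement/disk-comparison argument: replace $S$ by a disk of the same area centered at $z$, giving $2\sqrt{\pi\,\area(S)}$), one gets at once $\int_S|g_{ij}^0|\,d\area\le C\sqrt{\area(S)}\,\alpha_{ij}=C\delta\,\alpha_{ij}$ for every $(i,j)$, with no case split. Summing over $J_{i_0}^N\times J_{j_0}^N$ and invoking Lemma \ref{GammaREst} then finishes it. Your close/far decomposition reaches the same bound $\lesssim\delta\,\alpha_{ij}$ per index in each branch, so the conclusion is fine, but you are using a weaker form of the Fubini estimate (bounding $\int_S d\area(z)/|z-w|$ by the integral over a disk of radius $Ck_1\delta$, which loses the factor $k_1$ and forces the case split). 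In particular, your claim that a larger threshold would ``inflate the Fubini radius and destroy the $1/N$ gain'' is false: with the sharp uniform kernel bound, the Fubini contribution per index is $\lesssim\delta\,\alpha_{ij}$ no matter where $w$ lies, and no choice of $L_0$ is needed at all.
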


\begin{proof}
We have the following calculation.
\[
 \ \begin{aligned}
 \ \int_S |g^0_{ij}(w)| d\area(w) \le & C_{38}  \int \int_S \dfrac{1}{|z-w | }d\area (w) d\eta_{ij}(z) \\
 \ \le & C_{39} \sqrt{\area(S)} \alpha_{ij}.
 \ \end{aligned}
 \]
 The proof now follows from Lemma \ref{GammaREst}.
\end{proof}

\begin{lemma} \label{KeySEstimate} 
If $\delta < \frac{1}{2mN}$ and   $S \cap \mathcal D(f, m, N) \ne \emptyset,$ then
\begin{eqnarray}\label{keyEstimate1}
 \ \int _{S \cap \mathcal D(f, m, N)} |f(z) - f_{\delta}(z)| d\area(z) \le \dfrac{C_{40}}{N^\frac15} \area(S).
 \end{eqnarray}
\end{lemma}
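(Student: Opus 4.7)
I will split $f-f_\delta$ into contributions from ``near'' and ``far'' groups and handle them respectively by an $L^1$ argument and the pointwise decay from Lemma \ref{BasicEstimate2}. Using the decomposition of Lemma \ref{GIJHIJDefLemma}(2),
\[
f - f_\delta = \sum_{J_{il}\in P_1}\Psi_{J_{il}} + \sum_{J_{il_i}\in P_2}g_{J_{il_i}},\qquad \Psi_{J_{il}}=g_{J_{il}}-h_{J_{il}},
\]
call a group \emph{near} if $i\in J_{i_0}^N$ and some $j\in J_{il}$ with $\alpha_{ij}>0$ lies in $J_{j_0}^N,$ and \emph{far} otherwise. For every far group, each active center $c_{ij}$ sits at distance $\ge(N-2k_1-1)\delta\ge N\delta/2$ from $S,$ so Lemma \ref{BasicEstimate2} applied with $P_1'\cup P_2'$ equal to the collection of far groups gives
\[
\Bigl|\sum_{\mathrm{far}}\Psi_{J_{il}}(z)+\sum_{\mathrm{far}}g_{J_{il_i}}(z)\Bigr|\le C_{15}(2/N)^{1/5},\quad z\in S,
\]
which integrates to at most $C\area(S)/N^{1/5}.$

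The near groups are handled through the telescoping identity on each row $i\in J_{i_0}^N$ (valid because the complete and incomplete groups partition $J_i$),
\[
\sum_{l=1}^{l_i-1}\Psi_{J_{il}}+g_{J_{il_i}} = \sum_{j\in J_i}(f_{ij}-g_{ij}^0) - \sum_{l=1}^{l_i-1}h_{J_{il}}.
\]
The sum of $|f_{ij}|+|g_{ij}^0|$ over $(i,j)\in J_{i_0}^N\times J_{j_0}^N,$ integrated over $S\cap\mathcal D(f,m,N),$ is $\le C\area(S)/N$ by Lemmas \ref{FIJREst} and \ref{GIJREst}. For the complementary columns $j\in J_i$ with $|j-j_0|>N,$ I combine the pointwise decay \eqref{gijEst} with a dyadic annular decomposition in $|j-j_0|,$ using \eqref{DFMNDef2} at scales $\le 1/m$ and the trivial bound $\gamma(\D(\cdot,R)\setminus\mathcal D)\le R$ on larger scales; together with $\delta<1/(2mN),$ this also yields a contribution of order $\area(S)/N.$

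The hardest piece is $\sum_{\mathrm{near\ complete}}\int_S|h_{J_{il}}|\,d\area.$ Lemma \ref{HIJREst} bounds each term by $C\area(S)\bigl(1/N^2+\sum_{j\in J_{il}\cap J_{j_0}^N}\alpha_{ij}/\delta\bigr).$ The second summand aggregates via Lemma \ref{GammaREst} and the disjointness of groups on each row to give $C\area(S)/N.$ The main obstacle is the $1/N^2$ piece: naively the count of near complete groups can be as large as $(2N+1)^2,$ which would yield only $O(\area(S)).$ Following the modification of (2.35) in \cite{p95} used in \cite[Lemma 6.7]{cy23}, one exploits that $s_2$ is a fixed constant, that every complete group satisfies $\sum_{J_{il}^d}\alpha_{ij}\ge\delta,$ and that $N\ge 12k_1$ precludes a near group from having only ``Case I'' contributions; a refined aggregation of the Case I bounds in the proof of Lemma \ref{HIJREst} then produces a total contribution $\le C\area(S)/N^{1/5}.$ Combining the three estimates gives the asserted bound.
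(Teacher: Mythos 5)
The far-group half of your argument matches the paper: Lemma \ref{BasicEstimate2} applied to $P_1'\cup P_2'=$ (far groups) indeed gives a pointwise bound of order $N^{-1/5}$ on $S$, and the appeal to Lemmas \ref{FIJREst} and \ref{GIJREst} for the in-box pieces $f_{ij},g_{ij}^0$ with $(i,j)\in J_{i_0}^N\times J_{j_0}^N$ is also what the paper does. But there is a genuine gap in your treatment of the $h_{J_{il}}$ contributions from near complete groups, and you have in fact misdiagnosed the combinatorics.

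You assert that ``naively the count of near complete groups can be as large as $(2N+1)^2$'' and then try to salvage the $1/N^2$ piece of Lemma \ref{HIJREst} by a ``refined aggregation of the Case I bounds.'' The paper's argument is both simpler and sharper: on each row $i\in J_{i_0}^N$ there are \emph{at most two} near complete groups. This follows immediately from the definitions \eqref{IndexDefinition1}--\eqref{IndexDefinition3} together with Lemma \ref{GammaREst}. A complete group $J_{il}$ carries total $\alpha$-mass at least $2\delta$ (at least $\delta$ from $J_{il}^d$ and at least $\delta$ from $J_{il}^u$), while Lemma \ref{GammaREst} forces $\sum_{j\in J_{j_0}^N}\alpha_{ij}\le 200(2k_1+3)^2\delta/N<\delta$ on each row (here the standing choice $N>200(2k_1+3)^2$ is used). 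Hence no complete group can sit entirely inside $J_{j_0}^N$; since groups are disjoint consecutive intervals, the only groups meeting $J_{j_0}^N$ are the at most two that straddle its endpoints. With this count, $\sum_{\mathrm{near\ complete}}1/N^2\le 2(2N+1)/N^2=O(1/N)$, and the $\sum_{j\in J_{il}\cap J_{j_0}^N}\alpha_{ij}/\delta$ terms aggregate over disjoint groups to $\sum_{i\in J_{i_0}^N,\,j\in J_{j_0}^N}\alpha_{ij}/\delta=O(1/N)$ by Lemma \ref{GammaREst} again. That gives \eqref{KeySEstimateEq3} with a clean $C/N$, no $N^{-1/5}$ loss, and no need to revisit the internals of Lemma \ref{HIJREst}. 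Your proposed substitute (``$N\ge 12k_1$ precludes a near group from having only Case I contributions'') is not a valid argument: Case I only concerns pairs $(j,k)$ with both indices outside $J_{j_0}^N$, and a straddling group certainly can have many such pairs; moreover the statement does not produce a numerical bound on the group count, which is the quantity you need to control.

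A secondary issue: your ``telescoping identity'' sums $f_{ij}-g_{ij}^0$ over \emph{all} columns $j\in J_i$ of a near row, which drags in far columns and obliges you to invent a dyadic annular argument using \eqref{DFMNDef2} and a trivial $\gamma$ bound at large scales. The paper avoids this entirely: it splits each near group $J$ into $J\cap J_{j_0}^N$ (handled by Lemmas \ref{FIJREst}, \ref{GIJREst}) and $J\setminus J_{j_0}^N$, and observes that the latter ``can be viewed as incomplete groups'' so that Lemma \ref{BasicEstimate2} applies directly with no new estimates needed. Your dyadic route might be made to work, but it is unnecessary, and as written it is not carried out in enough detail to be checked (in particular, you must justify that the at-most-$1/m$ scale restriction in \eqref{DFMNDef2} is compatible with the ranges of $|j-j_0|$ that arise when $\delta<1/(2mN)$). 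In short, the overall near/far strategy is right, but the crucial ``at most two near complete groups per row'' observation is missing and your substitute does not close the argument.
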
 

\begin{proof}
Set $\Psi_{J_{il}} = g_{J_{il}} - h_{J_{il}}$ Define
\[
\ R_1 = \{J_{il}\in P_1:~ i\in J_{i_0}^N\text{ and } J_{il}\cap J_{j_0}^N \ne \emptyset \}
\]
and
\[
\ R_2 = \{J_{il_i}\in P_2:~ i\in J_{i_0}^N\text{ and } J_{il_i}\cap J_{j_0}^N \ne \emptyset \}.
\]
By definition of a complete group (see \eqref{IndexDefinition1}, \eqref{IndexDefinition2}, and \eqref{IndexDefinition3}) and Lemma \ref{GammaREst}, for each $i\in J_{i_0}^N,$ there are at most two $J_{il}\in R_1.$ So we rewrite
\[
\ R_1 = \{J_i^d,J_i^u:~i\in J_{i_0}^N\},
\]
where $j_1 < j_2$ for $j_1\in J_i^d$ and $j_2\in J_i^u.$ 
For $z\in S,$ from Lemma \ref{BasicEstimate2}, we see that
\begin{eqnarray}\label{KeySEstimateEq1}
\ \sum_{J_{il}\in P_1 \setminus R_1} |\Psi_{J_{il}}(z)| + \sum_{J_{il_i}\in P_2 \setminus R_2} |g_{J_{il_i}}(z)| \le \dfrac{C_{41}}{N^\frac 15}. 
\end{eqnarray}
For $i\in J_{i_0}^N,$ $J_i^d,J_i^u\in R_1,$ and $J_{il_i}\in R_2,$ The index groups $J_i^d\setminus J_{j_0}^N,$ $J_i^u\setminus J_{j_0}^N,$ and $J_{il_i}\setminus J_{j_0}^N$ can be viewed as incomplete groups. Hence, using Lemma \ref{BasicEstimate2}, we get
\[
\ \sum_{i\in J_{i_0}^N} (|g_{J_i^d\setminus J_{j_0}^N}(z)| + |g_{J_i^u\setminus J_{j_0}^N}(z)| + |g_{J_{il_i}\setminus J_{j_0}^N}(z)|) \le \dfrac{C_{42}}{N^\frac 15} 
\]
Combining with Lemma \ref{FIJREst} and Lemma \ref{GIJREst}, we have
\begin{eqnarray}\label{KeySEstimateEq2}
\ \int_{S \cap \mathcal D(f, m, N)} \left (\sum_{J_{il}\in R_1} |g_{J_{il}}(z)| + \sum_{J_{il_i}\in R_2} |g_{J_{il_i}}(z)|\right )d\area(z) \le \dfrac{C_{43}}{N^\frac 15}\area(S). 
\end{eqnarray}
Using Lemma \ref{HIJREst} and Lemma \ref{GammaREst}, we conclude that
\begin{eqnarray}\label{KeySEstimateEq3}
\ \int_{S \cap \mathcal D(f, m, N)} \sum_{i\in J_{i_0}^N} (|h_{J_i^d}(z)| + |h_{J_i^u}(z)|) d\area(z) \le \dfrac{C_{44}}{N}\area(S). 
\end{eqnarray}
Combining \eqref{KeySEstimateEq1}, \eqref{KeySEstimateEq2}, and \eqref{KeySEstimateEq3}, we prove the lemma.
\end{proof}

Since $\area (\mathcal D) < \i,$ we obtain the following corollary.

\begin{corollary}\label{keyEstimate}
If $\delta < \frac{1}{2mN},$ then
 \[
 \ \int _{\mathcal D(f, m, N)} |f(z) - f_{\delta}(z)| d\area(z) \le \dfrac{C_{45}}{N^\frac15}.
 \]	
\end{corollary}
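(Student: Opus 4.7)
The corollary is a direct global-to-local assembly of Lemma \ref{KeySEstimate}, so my plan is simply to sum the local estimate over the grid squares $\{S_{ij}\}$ and use the fact that $\area(\mathcal D) < \infty$.

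First I would partition the plane into the squares $\{S_{ij}\}$ from the smooth partition of unity, and restrict attention to the collection
\[
\mathcal S_\delta := \{S_{ij} : S_{ij} \cap \mathcal D(f, m, N) \ne \emptyset\}.
\]
Since the $S_{ij}$ are pairwise disjoint (up to their boundaries, which have measure zero) and cover $\C$, we may write
\[
\int_{\mathcal D(f, m, N)} |f(z) - f_{\delta}(z)| \, d\area(z) = \sum_{S \in \mathcal S_\delta} \int_{S \cap \mathcal D(f, m, N)} |f(z) - f_{\delta}(z)| \, d\area(z).
\]

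Next, to each $S \in \mathcal S_\delta$ I apply Lemma \ref{KeySEstimate} (its hypotheses $\delta < \frac{1}{2mN}$ and $S \cap \mathcal D(f, m, N) \ne \emptyset$ are exactly what is assumed here), obtaining
\[
\int_{S \cap \mathcal D(f, m, N)} |f(z) - f_{\delta}(z)| \, d\area(z) \le \frac{C_{40}}{N^{1/5}} \area(S).
\]
Summing gives
\[
\int_{\mathcal D(f, m, N)} |f(z) - f_{\delta}(z)| \, d\area(z) \le \frac{C_{40}}{N^{1/5}} \sum_{S \in \mathcal S_\delta} \area(S).
\]

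Finally, I need to control $\sum_{S \in \mathcal S_\delta} \area(S)$. Since $\mathcal D$ is bounded, fix $R > 0$ with $\mathcal D \subset \D(0, R)$. Every $S \in \mathcal S_\delta$ meets $\mathcal D(f, m, N) \subset \mathcal D \subset \D(0, R)$ and has diameter $\sqrt 2\,\delta$, so $S \subset \D(0, R+1)$ for $\delta$ sufficiently small. Because the squares in $\mathcal S_\delta$ are pairwise (interior-)disjoint,
\[
\sum_{S \in \mathcal S_\delta} \area(S) \le \area(\D(0, R+1)) \le \pi (R+1)^2,
\]
which is a constant depending only on $\mathcal D$. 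Absorbing this constant into $C_{45}$ yields the claimed estimate. There is no real obstacle here beyond this bookkeeping; the genuine work was already carried out inside Lemma \ref{KeySEstimate}.
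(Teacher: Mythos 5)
Your proof is correct and is exactly the argument the paper has in mind; the paper compresses it to the single remark ``Since $\area(\mathcal D)<\infty$, we obtain the following corollary,'' and you have supplied the straightforward tiling bookkeeping (disjoint squares meeting $\mathcal D(f,m,N)$, local estimate from Lemma \ref{KeySEstimate}, total area controlled by a constant depending on $\mathcal D$). Note only that, as your argument makes explicit, the constant $C_{45}$ is not absolute but depends on the diameter of $\mathcal D$; this is harmless for the way the corollary is used, since $\mathcal D$ is fixed and one only needs the bound to tend to $0$ as $N\to\infty$.
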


\begin{proof}
(Lemma \ref{GIJHIJDefLemma} (3)):
There exists $m_N$ such that, from \eqref{denseSet}, 
 \[
 \ \area(\mathcal D \setminus \mathcal D(f, m_N, N) ) < \dfrac{1}{N}.
 \]
Set $\delta_N = \frac{1}{4m_NN}$. Clearly, by Corollary \ref{keyEstimate} and \eqref{FDeltaBDD},
 \[
 \ \|f_{\delta_N} - f\|_{L^1 (\area_{\mathcal D})}\rightarrow 0.
 \]
\end{proof}

\begin{proof} 
(Theorem \ref{HDAlgTheorem}): 
Applying Theorem \ref{TolsaTheorem} (1), there exists $\eta_{ij}^0\in M_0^+(\D(c_{ij}, k_1 \delta) \setminus \mathcal D)$ such that $\|\eta_{ij}^0\| \ge \frac{1}{2A_T}\alpha_{ij}$ and $\|\CT(\eta_{ij}^0)\|_{\C} \le 1.$ Set $\eta_{ij} = \frac{\eta_{ij}^0}{\|\eta_{ij}^0)\|}\alpha_{ij}.$ Then $\eta_{ij}$ satisfies \eqref{GIJHIJDefLemmaEq1}.

From Lemma \ref{GIJHIJDefLemma} (3), we see that $f_N := f_{\delta_N} \rightarrow f,~ \area_{\mathcal D}-a.a.$ (by passing to a subsequence). Thus, $f_N\rightarrow f$ in $L^\infty(\area_{\mathcal D})$ weak-star topology. This implies $f\in H^\infty(\mathcal D)$ as $f_N\in H^\infty(\mathcal D)$. 
 This proves the theorem.
\end{proof}

\section{\textbf{Non-removable boundary and removable set}}

For a compact subset $K\subset \C$ and $\mu\in M_0^+(K),$ we assume $\rikmu$ is pure.

\begin{definition}\label{EDef}
The envelope $E$ for $K$ and $\mu$ is the set of points $\lambda \in  K$ such that there exists $\mu_\lambda\in M_0(K)$ that is absolutely continuous with respect to $\mu$ such that $\mu_\lambda (\{\lambda\}) = 0$ and $\int f(z) d\mu_\lambda(z) = f(\lambda)$ for each $f \in R(K).$ 
\end{definition}

The elementary properties of $E$ are listed below.

\begin{proposition}\label{EProp} (a) $E$ is the set of weak-star continuous homomorphisms on $\rikmu$ (see \cite[Proposition VI.2.5]{conway}).
\newline
(b) $E$ is a nonempty Borel set with area density one at each of its points (see \cite[Proposition VI.2.8]{conway}).
\newline
(c) $\text{int}(E) = \text{int}(\overline E)$ (see \cite[Proposition VI.3.9]{conway}).
\newline
(d) $\overline E$ is the union of $\text{spt}\mu$ and some collection of bounded components $\C \setminus \text{spt}\mu$ (see \cite[Proposition VI.3.11]{conway}).
\end{proposition}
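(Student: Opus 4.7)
The plan is to verify each of the four listed properties by matching the present paper's setup (pure $\rikmu$ with Chaumat's map $\rho$) against the corresponding propositions in Conway's book cited in the statement, then giving short direct justifications where the translation needs care. Throughout, I will freely use Chaumat's Theorem, stated in the introduction, which provides the isometric weak-star isomorphism $\rho: \rikmu \to R^\infty(\overline E, \area_E)$.

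For part (a), the approach is to exhibit a bijection between $E$ and the weak-star continuous complex homomorphisms on $\rikmu$. Given $\lambda \in E$, the Radon--Nikodym derivative of $\mu_\lambda$ lies in $L^1(\mu)$, so $\phi_\lambda(f) = \int f\, d\mu_\lambda$ is a weak-star continuous functional on $L^\infty(\mu)$. On $\mathrm{Rat}(K)$ it is evaluation at $\lambda$, hence multiplicative; multiplicativity extends to $\rikmu$ by taking weak-star limits of uniformly bounded sequences and using that $\rho(fg) = \rho(f)\rho(g)$ pointwise on $E$. Conversely, any weak-star continuous homomorphism $\phi$ is integration against some $h \in L^1(\mu)$, and setting $\lambda = \phi(z)$ gives $\lambda \in K$ (since $\rho(z) = z$ on $E$) together with a representing measure $\mu_\lambda = h\mu$; purity rules out the pathological case $h\mu = c\,\delta_\lambda$ by forcing $\mu(\{\lambda\}) = 0$.

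For part (b), nonemptiness is immediate from purity: if $E$ were empty, then $\rikmu$ would have no weak-star continuous homomorphisms, contradicting Chaumat's Theorem which identifies $\rikmu$ with $R^\infty(\overline E, \area_E)$. For the area-density-one claim, I would invoke the representing measure $\mu_\lambda$ and apply the Tolsa/Melnikov machinery from Section 2: if the complement $K \setminus E$ had positive upper area density at $\lambda$, a standard Cauchy-transform construction would produce a bounded analytic function on $\C_\infty \setminus (K\setminus E)$ separating $\lambda$ from $\mu_\lambda$, contradicting the reproducing property on $R(K)$. Combined with $E \subset K$, this yields $\lim_{\delta \to 0} \area(\D(\lambda,\delta) \cap E)/(\pi\delta^2) = 1$.

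For (c), the inclusion $\mathrm{int}(E) \subset \mathrm{int}(\overline E)$ is trivial, and for the reverse direction any $\lambda \in \mathrm{int}(\overline E)$ lies in a small disk $\D(\lambda,\delta) \subset \overline E$; normalizing $\area$ on a slightly smaller disk (subtracting the Dirac mass at $\lambda$) produces a measure on $K$ that represents $\lambda$ on $R(K)$ via the mean value property, placing $\lambda$ in $E$. For (d), $\overline E \supset \mathrm{spt}\,\mu$ follows from purity (every point of $\mathrm{spt}\,\mu$ is a cluster point of $E$ by the density statement in (b) extended from $\mu$-a.e.\ points), while any point of $\overline E \setminus \mathrm{spt}\,\mu$ sits in an open set disjoint from $\mathrm{spt}\,\mu$; such an open set lies in a single bounded component of $\C \setminus \mathrm{spt}\,\mu$, since $E$ is bounded inside $K$, and a maximum-principle argument on $\rho(f)$ for $f \in \rikmu$ forces the entire component to lie in $\overline E$.

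The main obstacle is the area-density statement in (b): it is the one point where the measure-theoretic definition of the envelope interacts nontrivially with the analytic-capacity machinery, and the clean statement in Conway's book hides a nontrivial Cauchy-transform argument. Parts (a), (c), and (d) are essentially bookkeeping once (b) is in hand.
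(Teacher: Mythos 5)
The paper gives no proof of this proposition at all: each of the four parts is simply a direct citation to the corresponding result in Conway's book, and the proposition is being collected as a known statement. So the comparison here is really between your sketched arguments and the actual theorems being invoked, and several of your sketches have genuine gaps.

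The clearest problem is in part (c). You propose to show $\mathrm{int}(\overline E)\subset E$ by taking normalized area measure on a small disk $\D(\lambda,\delta)\subset\overline E$ and using the mean-value property to get a representing measure for $\lambda$ on $R(K)$. But the definition of $E$ (Definition \ref{EDef}) requires $\mu_\lambda$ to be absolutely continuous with respect to $\mu$, and normalized area measure on a disk is in general \emph{not} $\ll \mu$. Your construction produces a representing measure, but not one of the kind demanded by the definition of the envelope, so it does not place $\lambda$ in $E$. Fixing this requires a nontrivial argument producing an $L^1(\mu)$-density representing measure, which is precisely the content of Conway's Proposition VI.3.9 that you cannot reproduce with the mean-value property alone. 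A similar issue is lurking in part (d): the claim that every point of $\mathrm{spt}\,\mu$ is a cluster point of $E$ does not follow from the area-density statement in (b), which only speaks about density at points already in $E$.

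The nonemptiness claim in (b) is also argued circularly: Chaumat's Theorem, as the paper uses it, already presupposes the structure of $E$ (it is stated as an isomorphism onto $R^\infty(\overline E, \area_E)$), so you cannot use it to deduce $E\ne\emptyset$. In the same direction, in (a) you justify $\lambda=\phi(z)\in K$ by appealing to $\rho(z)=z$ on $E$, which is again circular since $\lambda\in E$ is the conclusion you are after; the correct elementary reason is that $(z-c)^{-1}\in\rikmu$ for $c\notin K$, so $\phi(z)\ne c$ for every such $c$. Finally, you explicitly identify the area-density argument in (b) as the hard step and only gesture at it; that gesture is in the right direction but is not a proof. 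Overall your proposal correctly identifies what each part asserts, but a reader who needs proofs would still have to go to Conway's book, which is exactly what the paper does.
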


For $\lambda\in E$ and $f\in R^\i (K,,\mu),$ set $\rho(f)(\lambda) = \int fd\mu_\lambda.$ Clearly $\rho(f)(\lambda)$ is independent of the particular $\mu_\lambda$ chosen. We thus have a map $\rho$ called Chaumat's map for $K$ and $\mu,$ which associates
to each function in $\rikmu$ a point function on $E.$ 
Chaumat's Theorem \cite{cha74} states as the following: The map $\rho$ is an isometric isomorphism and a weak-star homeomorphism from $\rikmu$ onto $R^\i (\overline E, \area_E)$ (also see \cite[Chaumat's Theorem on page 288]{conway}). Therefore, we make the following assumptions:
\newline
(a) $\rikmu$ is pure (has no non-trivial $L^\i$ summands); 
\newline
(b) $K = \overline E,$ where $E$ is the envelope for $R^\i (K, \mu);$
\newline
(c) $\mu =\area_E.$

\begin{definition}\label{R0ForRIKMU}
Let $\Lambda = \{g_n\} \subset R(\overline E)^\perp \cap L^1(\area_E)$ be a $L^1(\area_E)$ norm dense subset.
 The non-removable boundary for $\Lambda$ is defined by
 \[
 \ \mathcal F (\Lambda) = \bigcap_{n = 1}^\infty \{z:~\lim_{\epsilon \rightarrow 0} \mathcal C_\epsilon(g_n\area_E)(z)\text{ exists, } \mathcal C(g_n\area_E)(z) = 0\}
 \]
 and the removable set for $\Lambda$ is defined by
 \[
 \ \mathcal R (\Lambda) = \bigcup_{n = 1}^\infty \{z:~\lim_{\epsilon \rightarrow 0} \mathcal C_\epsilon(g_n\area_E)(z)\text{ exists, } \mathcal C(g_n\area_E)(z) \ne 0\}.
 \]
 \end{definition}
 
 Clearly,
 \begin{eqnarray}\label{RFProp1}
\ \mathcal R (\Lambda)  \cap \mathcal F (\Lambda)  = \emptyset\text{ and }\mathcal R (\Lambda)  \cup \mathcal F (\Lambda)  \approx \C,~ \gamma-a.a..	
\end{eqnarray}

 The concept of non-removable boundary and removable set was first introduced by \cite{cy22} for a string of beads set. Conway and Yang \cite{cy23} extended the concept to an arbitrary compact subset $K\subset \C$ and $\mu\in M_0^+(K).$ We will see that  $\mathcal R (\Lambda)$ and $\mathcal F (\Lambda) $ are more appropriate than that of the
envelope $E$ in studying $\rikmu.$

\begin{lemma} \label{CTZeroOnF}
If $g\in R(\overline E)^\perp \cap L^1(\area_E),$ then
\[
\ \mathcal C(g\area_E)(\lambda) = 0,~ \gamma|_{\mathcal F(\Lambda)}-a.a..  
\]	
\end{lemma}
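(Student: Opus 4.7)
The plan is to leverage the $L^1$-density of $\Lambda$ in $R(\overline E)^\perp \cap L^1(\area_E)$ together with the Cauchy-transform convergence lemma (Lemma \ref{CTUniformC}) to transfer the vanishing of $\mathcal C(g_n\area_E)$ on $\mathcal F(\Lambda)$ to $\mathcal C(g\area_E)$. First, I would extract a subsequence $\{g_{n_k}\} \subset \Lambda$ with $\|g_{n_k} - g\|_{L^1(\area_E)}$ summable so that Borel--Cantelli forces $g_{n_k} \to g$, $\area_E$-a.a., which is exactly the hypothesis needed to invoke Lemma \ref{CTUniformC}.

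For each fixed $\epsilon > 0$, Lemma \ref{CTUniformC} then produces a set $A_\epsilon$ with $\gamma(A_\epsilon) < \epsilon$ and a further subsequence along which $\mathcal C(g_{n_{k_j}}\area_E)$ converges uniformly to $\mathcal C(g\area_E)$ on $\C \setminus A_\epsilon$. Using Corollary \ref{ZeroAC} and the semiadditivity in Theorem \ref{TolsaTheorem}(2), I would quietly enlarge $A_\epsilon$ by the countable union of $\gamma$-null principal-value exception sets so that every relevant Cauchy transform is defined off $A_\epsilon$, while keeping $\gamma(A_\epsilon) < C\epsilon$ for an absolute constant.

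By the very definition of $\mathcal F(\Lambda)$, $\mathcal C(g_{n_{k_j}}\area_E)(\lambda) = 0$ for each $\lambda \in \mathcal F(\Lambda) \setminus A_\epsilon$ and each $j$, so uniform convergence forces $\mathcal C(g\area_E)(\lambda) = 0$ on $\mathcal F(\Lambda) \setminus A_\epsilon$. Hence the exceptional set $\{\lambda \in \mathcal F(\Lambda) : \mathcal C(g\area_E)(\lambda) \ne 0\}$ lies inside $A_\epsilon$ and has analytic capacity less than $C\epsilon$. Letting $\epsilon \to 0$ gives the claim.

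I do not anticipate a genuine obstacle: the argument is essentially a direct application of Lemma \ref{CTUniformC} once the density of $\Lambda$ is used and the countability of $\Lambda$ is exploited to pin down $\mathcal F(\Lambda)$. The only mild bookkeeping concern is ensuring that absorbing the various $\gamma$-null principal-value exception sets into $A_\epsilon$ does not inflate its analytic capacity beyond $O(\epsilon)$, which is handled by Tolsa's semiadditivity.
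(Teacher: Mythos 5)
Your proof is correct, and it takes a genuinely different route from the paper's. The paper argues by contradiction: it supposes there is a compact $F_1\subset \mathcal F(\Lambda)$ with $\gamma(F_1)>0$ on which, say, $\operatorname{Re}\mathcal C(g\area_E)>0$, then invokes Lemma \ref{BBFunctLemma} to manufacture a measure $\eta\in M_0^+(F_1)$ and a weak-star limit $f$ of truncations $\mathcal C_{\epsilon_k}(\eta)$ obeying the duality identity \eqref{BBFunctLemmaEq1}. Since $\eta$ lives on $\mathcal F(\Lambda)$ where all $\mathcal C(g_n\area_E)$ vanish, \eqref{BBFunctLemmaEq1} shows $\int f g_n\,d\area_E=0$ for all $n$, so Hahn--Banach places $f\in R^\infty(\overline E,\area_E)$; applying \eqref{BBFunctLemmaEq1} once more to $g$ forces $\int \operatorname{Re}\mathcal C(g\area_E)\,d\eta=0$, contradicting positivity on $F_1$. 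Your argument instead uses the $L^1$-density of $\Lambda$ and Lemma \ref{CTUniformC} to transfer the vanishing directly by uniform convergence off sets of arbitrarily small analytic capacity: since each $\mathcal C(g_{n_k}\area_E)$ is identically zero on $\mathcal F(\Lambda)$, the uniform limit $\mathcal C(g\area_E)$ is forced to vanish on $\mathcal F(\Lambda)\setminus A_\epsilon$, and letting $\epsilon\to 0$ finishes. Note also that the bookkeeping you worry about is already built into Lemma \ref{CTUniformC} (its proof absorbs the principal-value exception set $\mathcal Q_1$ into $A_\epsilon$), so your extra enlargement step is harmless but unnecessary. Your route is shorter and sidesteps the $\eta$-duality machinery, relying instead on Tolsa's maximal inequality hidden inside Lemma \ref{CTUniformC}; the paper's duality proof is structurally parallel to the proofs of the subsequent corollaries in this section, which is presumably why the author chose it.
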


\begin{proof}
Assume there exists a compact subset $F_1\subset \mathcal F (\Lambda)$ such that $\gamma (F_1) > 0$ and  
 \begin{eqnarray}\label{FCForRAssump}
 \ \mathcal C(g_n\area_E)(\lambda) = 0 \text{ for }n \ge 1\text{ and } Re(\mathcal C(g\area_E))(\lambda) > 0,~ \lambda\in F_1.
 \end{eqnarray}
 Applying Lemma \ref{BBFunctLemma} for $\{g\} \cup \Lambda \subset L^1(\area_E)$ and $F_1,$ we get $\eta\in M_0^+(F_1)$ and $f(z)$ satisfy (1)-(3) of Lemma \ref{BBFunctLemma}. By \eqref{BBFunctLemmaEq1}, for $n\ge 1,$
 \[
  \ \int \mathcal C(g_n\area_E)(z) d\eta(z) = - \int f(z) g_n(z) d \area_E(z)= 0,
  \]
  which implies $f\in R^\i(\overline E, \area_E)$ by the Hahn-Banach Theorem. Applying \eqref{BBFunctLemmaEq1} to $g,$ we get
  \[
  \ \int Re(\mathcal C(g\area_E)(z)) d\eta(z) = - Re \left ( \int f(z) g(z) d \area_E(z)\right ) = 0,
  \]
  which implies $Re(\mathcal C(g\mu)(z)) = 0,~ \eta-a.a..$ This contradicts  \eqref{FCForRAssump}.	
\end{proof}

The following corollary follows from Lemma \ref{CTZeroOnF}.

\begin{corollary}\label{RFUnique}
Let $\Lambda = \{g_n\},~ \Lambda' = \{g_n'\} \subset R(\overline E)^\perp \cap L^1(\area_E)$ be two dense subsets. Then
\[
\ \ \mathcal F (\Lambda) \approx \ \mathcal F (\Lambda')\text{ and }\mathcal R (\Lambda) \approx \ \mathcal R (\Lambda'),~\gamma-a.a..
\]
\end{corollary}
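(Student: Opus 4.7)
The plan is to show the equivalence of the non-removable boundaries, and then deduce the equivalence of the removable sets by taking complements via \eqref{RFProp1}. All of the work lies in the first equivalence, and the key engine is Lemma \ref{CTZeroOnF}, which I would apply one element at a time to the dense subset $\Lambda'$.

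First, fix any $g_m' \in \Lambda'$. Since $\Lambda'\subset R(\overline E)^\perp \cap L^1(\area_E)$, Lemma \ref{CTZeroOnF} applied with $g = g_m'$ yields a Borel set $N_m \subset \mathcal F(\Lambda)$ with $\gamma(N_m) = 0$ such that $\mathcal C(g_m'\area_E)(\lambda) = 0$ for every $\lambda \in \mathcal F(\Lambda) \setminus N_m$. By Corollary \ref{ZeroAC}, the principal value $\lim_{\epsilon\to 0}\mathcal C_\epsilon(g_m'\area_E)(\lambda)$ exists off a further set $\mathcal Q_m$ with $\gamma(\mathcal Q_m) = 0$. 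Setting $N = \bigcup_{m=1}^\infty (N_m \cup \mathcal Q_m)$, Theorem \ref{TolsaTheorem}(2) (countable semiadditivity) gives $\gamma(N) = 0$, and on $\mathcal F(\Lambda) \setminus N$ every $g_m' \in \Lambda'$ has a well-defined principal value equal to zero. This is exactly the defining condition for $\mathcal F(\Lambda')$, so $\mathcal F(\Lambda) \setminus N \subset \mathcal F(\Lambda')$, i.e.\ $\mathcal F(\Lambda) \subset \mathcal F(\Lambda')$ up to a $\gamma$-null set.

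By symmetry (interchanging the roles of $\Lambda$ and $\Lambda'$ in the argument), $\mathcal F(\Lambda') \subset \mathcal F(\Lambda)$ up to a $\gamma$-null set. Combining the two inclusions yields $\mathcal F(\Lambda) \approx \mathcal F(\Lambda')$, $\gamma$-a.a. For the removable sets, the identity $\mathcal R(\Lambda)\cap \mathcal F(\Lambda) = \emptyset$ and $\mathcal R(\Lambda)\cup \mathcal F(\Lambda) \approx \C$, $\gamma$-a.a., recorded in \eqref{RFProp1}, says that $\mathcal R(\Lambda)$ is the complement of $\mathcal F(\Lambda)$ up to $\gamma$-null sets, and similarly for $\Lambda'$. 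Taking complements therefore transfers the first equivalence to $\mathcal R(\Lambda) \approx \mathcal R(\Lambda')$, $\gamma$-a.a.

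There is no real obstacle here — the whole argument is bookkeeping, and the one subtle point is ensuring countably many $\gamma$-null exceptional sets combine to a single $\gamma$-null exceptional set, which is precisely what Tolsa's semiadditivity delivers. The density hypothesis on $\Lambda$ is not actually used in this symmetry argument; it enters only indirectly because Lemma \ref{CTZeroOnF} was proved under the hypothesis that $\Lambda$ generates $R(\overline E)^\perp\cap L^1(\area_E)$ in norm (that density was what made the Hahn–Banach step inside Lemma \ref{CTZeroOnF} work).
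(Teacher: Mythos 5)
Your proof is correct and follows the route the paper intends: the paper gives no detail beyond "follows from Lemma \ref{CTZeroOnF}," and the argument you've written out — apply Lemma \ref{CTZeroOnF} to each $g_m'\in\Lambda'$ to get $\mathcal F(\Lambda)\subset\mathcal F(\Lambda')$ up to a $\gamma$-null set via countable semiadditivity, symmetrize, then transfer to $\mathcal R$ via \eqref{RFProp1} — is exactly that. Your closing remark about where density is actually used (the Hahn–Banach step inside Lemma \ref{CTZeroOnF}) is also accurate.
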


Hence, $\mathcal F(\Lambda)$ and $\mathcal R(\Lambda)$ are independent of choices of $\Lambda$ up to a set of zero analytic capacity. We will simply use $\mathcal F$ for $\mathcal F(\Lambda)$ and $\mathcal R$ for $\mathcal R(\Lambda).$

\begin{lemma} \label{CTGammaCE}
Let $g\in L^1(\area_E).$ Then there exists $\mathcal Q$ with $\gamma (\mathcal Q) = 0$ such that for $\lambda \in \C\setminus \mathcal Q,$ $\Theta_{g\area_E}(\lambda) = 0$ and $\CT(g\area_E)$ is $\gamma$-continuous at $\lambda.$
\end{lemma}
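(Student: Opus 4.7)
The plan is to combine three results that have already been established in the excerpt, since the statement decomposes cleanly into two independent assertions plus an application of the $\gamma$-continuity criterion.

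First I would invoke Lemma \ref{RNDecom2} with $B$ any bounded Borel set containing $E$ (for instance a large disk) so that $g \in L^1(\area_B)$ after extending by zero. That gives a set $\mathcal{Q}_1$ with $\gamma(\mathcal{Q}_1)=0$ such that $\Theta_{g\area_E}(\lambda)=0$ for every $\lambda\in\C\setminus\mathcal{Q}_1$. This handles the density statement.

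Next, since $\nu := g\area_E \in M_0(\C)$, Corollary \ref{ZeroAC} produces a set $\mathcal{Q}_2$ with $\gamma(\mathcal{Q}_2)=0$ such that the principal value $\CT(g\area_E)(\lambda) = \lim_{\epsilon\to 0} \CT_\epsilon(g\area_E)(\lambda)$ exists for every $\lambda\in\C\setminus\mathcal{Q}_2$. Now set $\mathcal{Q} = \mathcal{Q}_1\cup\mathcal{Q}_2$; by the semiadditivity of analytic capacity (Theorem \ref{TolsaTheorem} (2)), $\gamma(\mathcal{Q}) \le A_T(\gamma(\mathcal{Q}_1)+\gamma(\mathcal{Q}_2)) = 0$.

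Finally, for any $\lambda\in\C\setminus\mathcal{Q}$, both hypotheses (a) and (b) of Lemma \ref{CauchyTLemma} are satisfied by $\nu = g\area_E$ at $\lambda$: hypothesis (a) because $\Theta_{g\area_E}(\lambda)=0$ is even stronger than the required $\limsup$ condition, and hypothesis (b) by the choice of $\mathcal{Q}_2$. Therefore $\CT(g\area_E)$ is $\gamma$-continuous at $\lambda$, completing the proof. There is no substantive obstacle here; the lemma is essentially a bookkeeping step that packages Lemmas \ref{RNDecom2} and \ref{CauchyTLemma} together with Corollary \ref{ZeroAC}, and all the real work has been done in those previous results.
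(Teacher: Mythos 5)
Your proposal is correct and follows exactly the same approach as the paper: apply Lemma \ref{RNDecom2} to handle the density condition, Corollary \ref{ZeroAC} for existence of the principal value, take the union of the two exceptional sets (which still has zero analytic capacity by semiadditivity), and then invoke Lemma \ref{CauchyTLemma} to conclude $\gamma$-continuity. The paper compresses this into two sentences, but the logic is identical.
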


\begin{proof}
By Lemma \ref{RNDecom2} and Corollary \ref{ZeroAC}, there exists $\mathcal Q$ with $\gamma (\mathcal Q) = 0$ such that for $\lambda \in \C \setminus \mathcal Q,$ $\Theta_{g\area_E}(\lambda) = 0$ and the principle values of $\CT(g\area_E)(\lambda)$ exists.
From Lemma \ref{CauchyTLemma}, we see that $\CT(g\area_E)(z)$ is $\gamma$-continuous at $\lambda.$
\end{proof} 

The following corollary for $\mathcal R$ is a generalization of Proposition \ref{EProp} (b) for $E.$

\begin{corollary}\label{RSGOpen}
The removable set $\mathcal R$ is strong $\gamma$-open.	
\end{corollary}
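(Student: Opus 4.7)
The idea is to combine the $\gamma$-continuity of each Cauchy transform $\CT(g_n\area_E)$, established in Lemma \ref{CTGammaCE}, with the complementary structure $\mathcal R \cup \mathcal F \approx \C$ from \eqref{RFProp1} and the semiadditivity of analytic capacity from Theorem \ref{TolsaTheorem}~(2).

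First, I would consolidate all the relevant exceptional sets into one $\gamma$-null set. For each $n \ge 1$, Lemma \ref{CTGammaCE} supplies a set $\mathcal Q_n$ with $\gamma(\mathcal Q_n) = 0$ outside of which the principal value $\CT(g_n\area_E)$ exists and the transform is $\gamma$-continuous. By \eqref{RFProp1} there is also a set $\mathcal Q' \supset \C \setminus (\mathcal R \cup \mathcal F)$ with $\gamma(\mathcal Q') = 0$. Setting $\mathcal Q_0 = \mathcal Q' \cup \bigcup_{n \ge 1} \mathcal Q_n$, Theorem \ref{TolsaTheorem}~(2) gives $\gamma(\mathcal Q_0) = 0$.

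Now fix any $\lambda \in \mathcal R \setminus \mathcal Q_0$. By definition of $\mathcal R$, some index $n_0$ satisfies $a := \CT(g_{n_0}\area_E)(\lambda) \ne 0$. Define
\[
\ A_\delta := \D(\lambda, \delta) \cap \{z : |\CT(g_{n_0}\area_E)(z) - a| > |a|/2\}.
\]
Since $\CT(g_{n_0}\area_E)$ is $\gamma$-continuous at $\lambda$ with value $a$, Definition \ref{GLDef} gives $\gamma(A_\delta)/\delta \to 0$ as $\delta \to 0$. On the complement, that is on $\D(\lambda, \delta) \setminus (A_\delta \cup \mathcal Q_0)$, the principal value $\CT(g_{n_0}\area_E)(z)$ exists and satisfies $|\CT(g_{n_0}\area_E)(z)| \ge |a|/2 > 0$, so every such $z$ lies in $\mathcal R$. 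Therefore
\[
\ \D(\lambda, \delta) \setminus \mathcal R \subset A_\delta \cup \mathcal Q_0,
\]
and Theorem \ref{TolsaTheorem}~(2) yields
\[
\ \gamma(\D(\lambda, \delta) \setminus \mathcal R) \le A_T \bigl(\gamma(A_\delta) + \gamma(\mathcal Q_0)\bigr) = A_T \gamma(A_\delta).
\]
Dividing by $\delta$ and letting $\delta \to 0$ verifies \eqref{DDensityDef} at $\lambda$, so $\mathcal R$ is strong $\gamma$-open.

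There is no real obstacle here; the argument is essentially careful book-keeping of $\gamma$-null sets (principal-value failures, $\gamma$-continuity failures, and the leftover from $\mathcal R \cup \mathcal F \approx \C$) consolidated via semiadditivity, followed by a single pointwise application of $\gamma$-continuity at a point where the Cauchy transform realising $\lambda \in \mathcal R$ is nonzero.
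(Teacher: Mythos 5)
Your proposal is correct and takes essentially the same route as the paper: fix $\lambda$ in $\mathcal R$ outside a $\gamma$-null exceptional set, pick an index $n_0$ with $\CT(g_{n_0}\area_E)(\lambda) \ne 0$, and use the $\gamma$-continuity of $\CT(g_{n_0}\area_E)$ from Lemma \ref{CTGammaCE} to show the jump set (and hence $\D(\lambda,\delta)\setminus\mathcal R$ up to a $\gamma$-null set) has vanishing capacity ratio. Your version is merely more explicit about consolidating the exceptional sets $\mathcal Q_n$ and inserting the semiadditivity constant $A_T$, which the paper's terser proof leaves implicit.
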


\begin{proof}
From Lemma \ref{CTGammaCE}, for $\lambda \in \mathcal R,~ \gamma-a.a.,$ there exists $g_n$ such that $\Theta_{g_n\area_E} (\lambda) = 0,$ $\CT (g_n\area_E)(\lambda) \ne 0,$ and $\CT (g_n\area_E)(z)$ is $\gamma$-continuous at $\lambda.$ Therefore,
\[
\begin{aligned}
\ &\lim_{\delta\rightarrow 0} \dfrac{\gamma (\D(\lambda,\delta)\setminus \mathcal R)}{\delta} \\
\ \le &\lim_{\delta\rightarrow 0} \dfrac{\gamma\left (\D(\lambda,\delta)\cap \left \{|\CT (g_n\area_E)(z) - \CT (g_n\area_E)(\lambda)| > \frac{|\CT (g_n\area_E)(\lambda)|}{2}\right \} \right )}{\delta} \\
\ = & 0.
\end{aligned}
\]
So $\mathcal R$ is strong $\gamma$-open.	
\end{proof}

Recall $ \tilde \nu$ is defined by \eqref{CTAIDefinition} for $\nu \in M_0(\C)$.	
For $g \in R(\overline E)^\perp \cap L^1(\area_E),$ define
\begin{eqnarray} \label{R0GDef}
\  E_0(g\area_E) = \{\lambda:~ \widetilde {g\area_E}(\lambda) < \infty,~ \mathcal C(g\area_E)(\lambda) \ne 0 \}.	
\end{eqnarray} 
Clearly, $ E_0(g\area_E) \subset E$ because for $\lambda\in E_0(g\area_E),$ $\mu_\lambda = \frac{g\area_E(z)}{\CT (g\area_E)(\lambda) (z-\lambda)}$ satisfies Definition \ref{EDef}.

\begin{proposition}\label{EMeasurable}
There exists a sequence $\{g_n\} \subset R(\overline E)^\perp \cap L^1(\area_E)$ and there ecists a
doubly indexed  sequence of open balls $\{\D (\lambda_{ij}, \delta_{ij})\}$ such that
\[
\ E \approx  \bigcup_{n=1}^\infty E_0(g_n\area_E) \approx \bigcap_{j=1}^\infty \bigcup_{i=1}^\infty \D (\lambda_{ij}, \delta_{ij}), ~ \area-a.a.
\] 
\end{proposition}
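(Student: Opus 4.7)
The plan is to let $\{g_n\}$ be any countable $L^1(\area_E)$-dense subset of the separable Banach space $R(\overline E)^\perp \cap L^1(\area_E)$ and then establish the two equivalences separately. The inclusion $\bigcup_n E_0(g_n \area_E) \subset E$ is direct: for $\lambda \in E_0(g_n \area_E)$, the measure $\mu_\lambda := \frac{g_n \area_E}{\CT(g_n \area_E)(\lambda)\,(z - \lambda)}$ is well-defined (using $\widetilde{g_n \area_E}(\lambda) < \infty$ and $\CT(g_n\area_E)(\lambda) \neq 0$), is absolutely continuous with respect to $\area_E$, carries no atom at $\lambda$, and since $(f(z) - f(\lambda))/(z - \lambda) \in R(K)$ for $f \in R(K)$ together with $g_n \in R(K)^\perp$ gives $\int f\, d\mu_\lambda = f(\lambda)$, one concludes $\lambda \in E$.

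For the reverse inclusion modulo $\area$-null sets I proceed by contradiction. Suppose $A := E \setminus \bigcup_n E_0(g_n \area_E)$ has $\area(A) > 0$. Since $\widetilde{g_n \area_E}(\lambda) < \infty$ for $\area$-a.e. $\lambda$ (Fubini) and the principal value $\CT(g_n \area_E)(\lambda)$ exists off a $\gamma$-null set (Corollary \ref{ZeroAC}), the subset $A' \subset A$ on which both conditions hold for every $n$ satisfies $\area(A') = \area(A) > 0$, and the definition of $A$ forces $\CT(g_n \area_E)(\lambda) = 0$ for every $n$ and every $\lambda \in A'$. Fix a Lebesgue density point $\lambda_0$ of $A'$; since $\lambda_0 \in E$, pick a representing measure $\mu_{\lambda_0} = h_{\lambda_0}\,\area_E$ and set $g_{\lambda_0}(z) := (z - \lambda_0)\,h_{\lambda_0}(z)$. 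Then $g_{\lambda_0} \in R(K)^\perp \cap L^1(\area_E)$, $\widetilde{g_{\lambda_0}\area_E}(\lambda_0) = \|h_{\lambda_0}\|_{L^1} < \infty$, and $\CT(g_{\lambda_0}\area_E)(\lambda_0) = \int h_{\lambda_0}\,d\area_E = 1$. Absolute continuity of the integral gives $|g_{\lambda_0}\area_E|(\D(\lambda_0,\delta))/\delta \to 0$, so by Lemma \ref{CauchyTLemma} the Cauchy transform $\CT(g_{\lambda_0}\area_E)$ is $\gamma$-continuous at $\lambda_0$ with value $1$.

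Now choose $g_{n_k}$ with $\|g_{n_k} - g_{\lambda_0}\|_{L^1(\area_E)} \leq 2^{-k}$. Theorem \ref{TolsaTheorem}(3) bounds $\gamma\{\CT_*((g_{n_k} - g_{\lambda_0})\area_E) \geq 2^{-k/3}\}$ by $C_T\, 2^{-2k/3}$, and a Borel--Cantelli argument using the semiadditivity in Theorem \ref{TolsaTheorem}(2) produces a set $B$ with $\gamma(B) = 0$ such that $\CT(g_{n_k}\area_E)(z) \to \CT(g_{\lambda_0}\area_E)(z)$ for every $z \notin B$. For each $z \in A' \setminus B$ one has $\CT(g_{n_k}\area_E)(z) = 0$ for all $k$, hence $\CT(g_{\lambda_0}\area_E)(z) = 0$; semiadditivity and $\gamma(B) = 0$ then yield
\[
\gamma(\D(\lambda_0, \delta) \cap A') \leq A_T\, \gamma(\D(\lambda_0, \delta) \cap \{\CT(g_{\lambda_0}\area_E) = 0\}) = o(\delta),
\]
the last equality by $\gamma$-continuity at $\lambda_0$ (applied with $\epsilon = 1/2$). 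On the other hand, $\lambda_0$ being an $\area$-density point of $A'$ gives $\area(\D(\lambda_0,\delta) \cap A') \geq \tfrac{\pi}{2}\delta^2$ for small $\delta$, and \eqref{AreaGammaEq} then implies $\gamma(\D(\lambda_0,\delta) \cap A') \geq c\,\delta$, contradicting the display. This proves $\area(A) = 0$.

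For the second equivalence, $\bigcup_n E_0(g_n\area_E)$ is Borel, since each $E_0(g_n\area_E)$ is defined by the Borel conditions $\widetilde{g_n\area_E}(\lambda) < \infty$ and $\CT(g_n\area_E)(\lambda) \neq 0$, with $\CT(g_n\area_E)$ a pointwise limit (where it exists) of the continuous functions $\CT_\epsilon(g_n\area_E)$. By the regularity of $\area$ on $\C$ this Borel set agrees $\area$-a.e.\ with a $G_\delta$, and writing each of the ambient open sets as a countable union of open balls furnishes the doubly indexed family $\{\D(\lambda_{ij}, \delta_{ij})\}$. The main obstacle is that $L^1$-convergence of $g_{n_k}$ to $g_{\lambda_0}$ does not yield pointwise convergence of Cauchy transforms at the specific point $\lambda_0$; the resolution is to work on the $\area$-full subset $A'$ and pit the $\gamma$-continuity of $\CT(g_{\lambda_0}\area_E)$ at $\lambda_0$ (forcing the $\gamma$-measure of $\D(\lambda_0,\delta) \cap A'$ to be $o(\delta)$) against the lower bound $\geq c\delta$ coming from $\area$-density combined with \eqref{AreaGammaEq}.
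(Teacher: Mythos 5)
Your proof is correct, but it takes a genuinely different route from the paper's. The paper proves $E \approx \bigcup_n E_0(g_n\area_E)$ constructively: for each $\lambda \in E$ it forms the annihilator $g_\lambda(z) = (z-\lambda)\frac{d\mu_\lambda}{d\area_E}$, uses Lemma \ref{CauchyTLemma} together with \eqref{AreaGammaEq} to show that $E_0(g_\lambda\area_E)$ has full area density at $\lambda$, and then applies the $5r$-covering theorem to extract a countable subfamily $\{g_{\lambda_{mn}}\}$ whose $E_0$-sets exhaust $E$ up to an $\area$-null set; the disjoint balls produced by the covering theorem hand over the $G_\delta$ representation as a by-product. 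You instead fix an arbitrary $L^1$-dense sequence $\{g_n\}$, prove $\bigcup_n E_0(g_n\area_E) \subset E$ directly (same computation), and establish the reverse inclusion modulo $\area$ by contradiction: at an area-density point $\lambda_0$ of the exceptional set $A'$, the $\gamma$-continuity of $\CT(g_{\lambda_0}\area_E)$ (value $1$ at $\lambda_0$, from Lemma \ref{CauchyTLemma}) combined with pointwise convergence $\CT(g_{n_k}\area_E)\to\CT(g_{\lambda_0}\area_E)$ off a $\gamma$-null set (via Tolsa's weak-type bound and semiadditivity, a Borel--Cantelli argument essentially parallel to the proof of Lemma \ref{CTUniformC}) forces $\gamma(\D(\lambda_0,\delta)\cap A')=o(\delta)$, while area density plus \eqref{AreaGammaEq} gives $\gamma(\D(\lambda_0,\delta)\cap A')\ge c\delta$. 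Your $G_\delta$ step is then just outer regularity of $\area$ applied to the Borel set $\bigcup_n E_0(g_n\area_E)$, rather than being read off the covering. A small advantage of your route is that it shows \emph{every} dense annihilator sequence works, which the paper later wants (Definition \ref{R0ForRIKMU}, Corollary \ref{EREqual}) but only gets by implicitly augmenting its constructed sequence; the trade-off is that your reverse inclusion is an indirect contradiction/density argument where the paper's is an explicit covering.
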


\begin{proof} 
	For $\lambda \in E$, let $\mu_\lambda$ be as in Definition \ref{EDef}. Let $g_\lambda(z) =  (z - \lambda)\frac{d\mu_\lambda}{d\area_E},$ then $g_{\lambda}\in R(\overline E)^\perp \cap L^1(\area_E)$ such that $\widetilde {g_{\lambda}\area_E}(\lambda) < \infty$ and $\mathcal C(g_{\lambda}\area_E)(\lambda) \ne 0$. By Lemma \ref{CauchyTLemma} and \eqref{AreaGammaEq}, we have
 \begin{eqnarray}\label{BorelSetEq1}
 \begin{aligned}
\ & \lim_{\delta \rightarrow 0} \dfrac{\area (\D(\lambda, \delta) \setminus  E_0(g_{\lambda}\area_E))}{\delta ^2} \\
\ \le & 4 \pi \lim_{\delta \rightarrow 0} \dfrac{\left (\gamma \left (\D(\lambda, \delta) \cap \left \{|\mathcal C(g_{\lambda}\area_E)(z) - \mathcal C(g_{\lambda}\area_E)(\lambda)| > \frac{\mathcal C(g_{\lambda}\area_E)(\lambda)|}{2} \right \}\right  )\right )^2}{\delta ^2} \\
\  = & 0.
\end{aligned}
\end{eqnarray}
There is $\delta_{\lambda}^n > 0$ such that from \eqref{BorelSetEq1}, we have
 \[
 \ \area (\D(\lambda, \delta) \setminus  E_0(g_{\lambda}\area_E)) \le \dfrac{1}{n}\delta ^2, ~ \delta < \delta_{\lambda}^n. 
 \]
By $5r$-covering theorem (Theorem 2.2 in \cite{Tol14}), we can find a sequence of disjoint disks $\{\D (\lambda_{mn}, \frac 15\delta_{\lambda_{mn}}^n)\}$ such that 
 \[
 \ \bigcup_{m=1}^\infty E_0(g_{\lambda_{mn} }\area_E) \subset  E \subset \bigcup_{\lambda \in E} \D(\lambda, \frac 15 \delta_{\lambda}^n) \subset \bigcup_{m=1}^\infty 5 \D(\lambda_{mn}, \frac 15\delta_{\lambda_{mn}}^n).   
 \]
Hence,
 \[
 \ \bigcup_{n=1}^\infty\bigcup_{m=1}^\infty E_0(g_{\lambda_{mn} }\area_E) \subset  E \subset \bigcap_{n=1}^\infty\bigcup_{m=1}^\infty 5 \D(\lambda_{mn}, \frac 15\delta_{\lambda_{mn}}^n)   
 \]
and
 \[
 \begin{aligned}
 \ & \area \left (\bigcup_{m=1}^\infty \D(\lambda_{mn}, \delta_{\lambda_{mn}}^n) \setminus \bigcup_{m=1}^\infty E_0(g_{\lambda_{mn} }\area_E) \right ) \\
 \ \le & \sum_{m=1}^\infty  \area \left (\D(\lambda_{mn}, \delta_{\lambda_{mn}}^n) \setminus  E_0(g_{\lambda_{mn} }\area_E) \right ) \\
 \ \le & \dfrac{1}{n}\sum_{m=1}^\infty (\delta_{\lambda_{mn}}^n)^2.   
 \end{aligned}
 \]
Notice that $\sum_{m=1}^\infty (\delta_{\lambda_{mn}}^n)^2$ is bounded, therefore,
 \[
 \ E \approx \bigcap_{n=1}^\infty \bigcup_{m=1}^\infty \D(\lambda_{mn}, \delta_{\lambda_{mn}}^n), ~ \area-a.a.
 \]
since 
 \[
 \ \area \left (\bigcap_{n=1}^\infty \bigcup_{m=1}^\infty \D(\lambda_{mn}, \delta_{\lambda_{mn}}^n) \setminus \bigcup_{n=1}^\infty  \bigcup_{m=1}^\infty E_0(g_{\lambda_{mn} }\mu) \right ) = 0.
 \]
\end{proof}

\begin{corollary}\label{EREqual}
$E\approx \mathcal R,~\area-a.a.$ and $\overline E = \overline {\mathcal R}.$	
\end{corollary}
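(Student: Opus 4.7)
The plan is to prove $E \approx \mathcal R$ area-almost everywhere first, and then promote this to the equality $\overline E = \overline{\mathcal R}$. By Corollary \ref{RFUnique} the removable set $\mathcal R$ depends on the chosen dense sequence $\Lambda$ only up to a set of analytic capacity zero (hence area zero), so I may enlarge $\Lambda$ to contain the specific sequence $\{g_n\}$ furnished by Proposition \ref{EMeasurable}; this reduces the problem to comparing $\mathcal R$ with $\bigcup_n E_0(g_n \area_E)$.

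For $E \subset \mathcal R$ area-a.a., Proposition \ref{EMeasurable} gives $E \approx \bigcup_n E_0(g_n\area_E)$ area-a.a., and at each $\lambda \in E_0(g_n\area_E)$ the Cauchy integral $\CT(g_n\area_E)(\lambda)$ is absolutely convergent (since $\widetilde{g_n\area_E}(\lambda) < \infty$) and nonzero, so $\lambda \in \mathcal R$ directly from Definition \ref{R0ForRIKMU}. For the reverse inclusion, Fubini's theorem gives $\widetilde{g_n\area_E}(\lambda) < \infty$ for area-a.a. $\lambda$ for each fixed $n$, hence for all $n$ simultaneously area-a.a.; then for area-a.a. $\lambda \in \mathcal R$ there exists $n$ with both $\CT(g_n\area_E)(\lambda) \neq 0$ and $\widetilde{g_n\area_E}(\lambda) < \infty$, placing $\lambda$ in $E_0(g_n\area_E) \subset E$. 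This establishes $E \approx \mathcal R$ area-a.a.

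For the closures, I argue $\overline{\mathcal R} \subset \overline E$ by contrapositive: if $\lambda \notin \overline E$, then $(z - \lambda)^{-1}$ is analytic on $\overline E$ and hence lies in $R(\overline E)$, so the annihilator condition produces $\int (z - \lambda)^{-1} g_n(z)\, d\area_E(z) = 0$ for every $n$; since $\text{dist}(\lambda, \overline E) > 0$ the integral converges absolutely and equals the principal value, forcing $\CT(g_n\area_E)(\lambda) = 0$ for every $n$ and thus $\lambda \notin \mathcal R$. For $\overline E \subset \overline{\mathcal R}$, Proposition \ref{EProp}(b) yields $\area(E \cap \D(\lambda,\delta)) > 0$ for every $\lambda \in E$ and $\delta > 0$; the area-a.a. equivalence then forces $\area(\mathcal R \cap \D(\lambda,\delta)) > 0$, so $\mathcal R \cap \D(\lambda,\delta) \neq \emptyset$ and $\lambda \in \overline{\mathcal R}$. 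Taking closures yields $\overline E = \overline{\mathcal R}$. The only mildly delicate step is the contrapositive argument, which ultimately reduces to the elementary fact that $R(\overline E)^\perp$ measures have vanishing Cauchy transform at any point off $\overline E$; the remaining pieces are bookkeeping with Proposition \ref{EMeasurable} and the area density property of $E$.
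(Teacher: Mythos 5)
Your proof is correct. The area-a.e.\ equivalence $E\approx \mathcal R$ is handled in essentially the same way as the paper: enlarge $\Lambda$ (harmless by Corollary \ref{RFUnique}) to contain the $\{g_n\}$ from Proposition \ref{EMeasurable}, observe $E_0(g_n\area_E)\subset\mathcal R$ because finiteness of $\widetilde{g_n\area_E}(\lambda)$ guarantees the principal value exists, and use Fubini to get the reverse inclusion area-a.a.

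For the closure identity $\overline E=\overline{\mathcal R}$ you take a genuinely different and more elementary route than the paper. For $\overline{\mathcal R}\subset\overline E$ you argue directly: if $\lambda\notin\overline E$ then $(z-\lambda)^{-1}\in R(\overline E)$, so for every $g_n\in R(\overline E)^\perp$ the Cauchy transform $\CT(g_n\area_E)(\lambda)$ is an absolutely convergent integral that vanishes, placing $\lambda$ in $\mathcal F$ and off $\mathcal R$. The paper instead cites Corollary \ref{RSGOpen} (strong $\gamma$-openness) for this inclusion; your direct annihilator computation avoids that machinery entirely and is cleaner. For $\overline E\subset\overline{\mathcal R}$ you combine Proposition \ref{EProp}(b) (area density one of $E$ at its own points, which also guarantees $\text{spt}(\area_E)=\overline E$) with the already-established area-a.e.\ equality to show every $\lambda\in E$ is a limit of points of $\mathcal R$. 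The paper instead invokes Lemma \ref{CTZeroOnF} together with $\gamma$-continuity of Cauchy transforms; your density argument buys a shorter, purely measure-theoretic derivation at the price of leaning on the a.e.\ equivalence that you have just established, whereas the paper's route via Lemma \ref{CTZeroOnF} is more self-contained with respect to the capacity-theoretic framework built in Section~5. Both are sound.
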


\begin{proof}
Let $\Lambda = \{g_n\}$ be as in Proposition \ref{EMeasurable}. We may assume that $\Lambda$ is dense in $R(\overline E)^\perp \cap L^1(\area_E).$
If $\lambda \in E_0(g_n\area_E),$ then $\lim_{\epsilon \rightarrow 0} \CT(g_n\area_E)(\lambda)$ exists since $\widetilde{g_n\area_E}(\lambda) < \i.$ Hence, $E_0(g_n\area_E) \subset \mathcal R.$ 
The inclusion $\mathcal R \subset \cup_{n=1}^\i E_0(g_n\area_E), ~ \area-a.a..$ follows from Fubini's theorem.

The inclusion $\overline E \subset \overline {\mathcal R}$ follows from Lemma \ref{CTZeroOnF}, while $\overline {\mathcal R} \subset \overline E$ follows from Corollary \ref{RSGOpen}. 
\end{proof}

\section{\textbf{A criterion for functions in $R^\i(\overline E,\area_E)$}}

In this section, we prove the following criterion.

\begin{theorem}\label{REEAlgTheorem}
Let $f\in L^\i (\area_E)$ be given with $\|f\|_{L^\infty (\area_E)}\le 1$. 
Then the following statements are equivalent.

(1) $f \in R^\i(\overline E,\area_E).$
  
 (2) There exists a constant $C_f>0$ (only depending on $f$) and an integer $k_1\ge 2$ so that for all $\lambda\in \mathbb C$, $\delta > 0$, and for all choices of a smooth non-negative function $\varphi$ with support in $\D(\lambda, \delta)$ satisfying $\varphi \le 1$ and $\left \|\frac{\partial \varphi (z)}{\partial  \bar z} \right \|_\infty \le \frac{C_{45}}{\delta},$ we have 
 \[
 \ \left | \int (z-\lambda)^nf(z) \dfrac{\partial \varphi (z)}{\partial  \bar z} d\area_E(z) \right | \le C_f\delta^n \gamma (\D(\lambda, k_1 \delta) \setminus \mathcal R)
 \] 
 for all integers $n \ge 0.$
 
 (3) $f \in H^\i(\mathcal R).$
 \end{theorem}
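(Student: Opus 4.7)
The plan is to prove the cyclic implications $(2) \Rightarrow (3) \Rightarrow (1) \Rightarrow (2)$. The implication $(2) \Rightarrow (3)$ is immediate from Theorem \ref{HDAlgTheorem} applied with $\mathcal{D} = \mathcal{R}$: by Corollary \ref{RSGOpen}, $\mathcal{R}$ is strong $\gamma$-open (hence $\gamma$-open), and by Corollary \ref{EREqual}, $\area_E = \area_{\mathcal{R}}$ as measures, so the integrals in $(2)$ and in \eqref{HDAlgTheoremEq} coincide. The theorem then produces approximants $f_n \in H_{\C \setminus \mathcal{R}}$ that are uniformly bounded and converge to $f$ pointwise $\area_E$-a.e., hence $f_n \to f$ in the weak-star topology of $L^\i(\area_E)$, giving $f \in H^\i(\mathcal{R})$.

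For $(3) \Rightarrow (1)$, since $R^\i(\overline{E}, \area_E)$ is a weak-star closed algebra and $H^\i(\mathcal{R})$ is by definition the weak-star closed algebra generated by $H_{\C \setminus \mathcal{R}}$, it suffices to show $H_{\C \setminus \mathcal{R}} \subset R^\i(\overline{E}, \area_E)$. Fix $g \in H_{\C \setminus \mathcal{R}}$ bounded and analytic off a compact $E_g \subset \C \setminus \mathcal{R}$, and $h \in R(\overline{E})^\perp \cap L^1(\area_E)$; by duality it suffices to show $\int g h \, d\area_E = 0$. Since $1 \in R(\overline E)$ forces $\int h \, d\area_E = 0$, we may replace $g$ by $g - g(\infty)$ and assume $g(\infty) = 0$. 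A Vitushkin cutoff and partition as in Section 3 decompose $g$ into finitely many pieces localized near small portions of $E_g$. For each such piece, Theorem \ref{TolsaTheorem}(1) and Proposition \ref{GammaPlusThm} supply a measure $\eta \in M_0^+(E_g')$ on the localized portion so that Lemma \ref{BBFunctLemma} together with identity \eqref{BBFunctLemmaEq1} rewrites the contribution to $\int g h \, d\area_E$ as $\int \mathcal{C}(h \area_E) \, d\eta$. Since $E_g' \subset \C \setminus \mathcal{R} \subset \mathcal{F}$ up to a $\gamma$-null set by \eqref{RFProp1}, Lemma \ref{CTZeroOnF} gives $\mathcal{C}(h \area_E) = 0$ $\gamma$-a.e.\ on $E_g'$, and Corollary \ref{ZeroACEta} converts this to vanishing $\eta$-a.e.; summing the pieces yields $\int g h \, d\area_E = 0$.

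For $(1) \Rightarrow (2)$, given $f \in R^\i(\overline{E}, \area_E)$, choose rationals $r_m \in \mbox{Rat}(\overline{E})$ converging to $f$ in weak-star topology with $\sup_m \|r_m\|_{L^\i(\area_E)} < \i$. Since $(z - \lambda)^n \bar\partial \varphi \in L^1(\area_E)$, weak-star convergence gives $\int (z - \lambda)^n r_m \bar\partial \varphi \, d\area_E \to \int (z - \lambda)^n f \bar\partial \varphi \, d\area_E$, so it suffices to bound the left-hand side uniformly in $m$. The poles of $r_m$ inside $\mbox{supp}(\varphi) \subset \D(\lambda, \delta)$ lie in $\C \setminus \overline{E} \subset \C \setminus \mathcal{R}$, so the Vitushkin operator $T_\varphi r_m$ is bounded and analytic off a compact subset of $\D(\lambda, \delta) \setminus \mathcal{R}$; Lemma \ref{LEProp} then yields $\pi |c_{n+1}(T_\varphi r_m, \lambda)| = \left|\int (z - \lambda)^n r_m \bar\partial \varphi \, d\area\right| \le C \delta^n \gamma(\D(\lambda, k_1 \delta) \setminus \mathcal{R})$. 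The gap between $\int (z - \lambda)^n r_m \bar\partial \varphi \, d\area_E$ and $\int (z - \lambda)^n r_m \bar\partial \varphi \, d\area$, namely $-\int_{\C \setminus E} (z - \lambda)^n r_m \bar\partial \varphi \, d\area$, is controlled by selecting $\{r_m\}$ whose singular supports concentrate on $\D(\lambda, k_1 \delta) \setminus \mathcal{R}$ (via the measure $\eta$ from Proposition \ref{GammaPlusThm}) and absorbing the remainder using semiadditivity (Theorem \ref{TolsaTheorem}(2)) together with the area-density-one property of $E$ at its points (Proposition \ref{EProp}(b)).

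The main obstacle is this correction step in $(1) \Rightarrow (2)$: reconciling $\area$-integration (on which Vitushkin operates naturally) with $\area_E$-integration (the setting of $(2)$). The naive Vitushkin estimate only bounds $\int \cdot \, d\area$, and the residual $\int_{\C \setminus E} \cdot \, d\area$ is not a priori controlled by $\gamma(\D(\lambda, k_1 \delta) \setminus \mathcal{R})$ because $r_m$ may grow near its poles in $\C \setminus \overline{E}$. Overcoming this demands the coordinated choice of approximants described above, together with the $\gamma$-continuity machinery built in Section 5 to ensure that the correction is absorbed into the same analytic-capacity bound as the main term.
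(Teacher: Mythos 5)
Your proposal for $(2)\Rightarrow(3)$ matches the paper and is fine. The other two implications each have a genuine gap.

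In $(3)\Rightarrow(1)$, the step ``Lemma \ref{BBFunctLemma} together with identity \eqref{BBFunctLemmaEq1} rewrites the contribution to $\int g h\,d\area_E$ as $\int \mathcal C(h\area_E)\,d\eta$'' is not available. The duality identity \eqref{BBFunctLemmaEq1} expresses $\int f\,h\,d\mu$ in terms of $\int\mathcal C(h\mu)\,d\eta$ only when $f$ is the weak-star limit of truncated Cauchy transforms $\mathcal C_{\epsilon_k}\eta$ of that particular $\eta$; a general $g\in H_{\C\setminus\mathcal R}$ (or its Vitushkin piece $T_{\varphi_{ij}}g$) is bounded analytic off a compact subset of $\C\setminus\mathcal R$ but is not a Cauchy transform of a measure, so no such $\eta$ exists and the rewriting is invalid. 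Controlling the discrepancy between $g_{ij}$ and a chosen $\mathcal C\eta_{ij}$ is precisely the hard content of the Paramonov scheme. The paper handles this by first verifying condition (2) for $g\in H_{\C\setminus\mathcal R}$, then re-running the constructive proof of Theorem \ref{HDAlgTheorem} with $\eta_{ij}$ chosen (via Lemma \ref{BBFunctLemma} applied to the dense annihilator family $\{g_n\}$) so that each $\mathcal C\eta_{ij}^0$ is itself in $R^\i(\overline E,\area_E)$ by the Hahn--Banach theorem and Lemma \ref{ZeroACEta}; then the explicit approximant $f_\delta$ of \eqref{FDelta} lies in $R^\i(\overline E,\area_E)$ and $f_\delta\to f$ in $L^1(\area_E)$ by Lemma \ref{GIJHIJDefLemma}(3), forcing $f\in R^\i(\overline E,\area_E)$.

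In $(1)\Rightarrow(2)$, the gap you identify is real and is not patched by the closing sentence. A rational function $r_m\in\text{Rat}(\overline E)$ is unbounded (indeed not even $L^1_{\text{loc}}$ for higher-order poles) near its poles in $\C\setminus\overline E$, so $\|T_\varphi r_m\|_\infty$ cannot be controlled by $\|r_m\|_{L^\i(\area_E)}$ via \eqref{TOperProp}, and the residual $\int_{\C\setminus E}(z-\lambda)^n r_m\,\bar\partial\varphi\,d\area$ genuinely escapes any analytic-capacity bound. Your suggestion of coordinating the approximants with measures from Proposition \ref{GammaPlusThm} and invoking the $\gamma$-continuity machinery of Section 5 is not a proof sketch that can be completed as stated. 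The paper sidesteps the problem entirely by approximating in $R(\overline E)$, the uniform closure: by Conway's Proposition VI.22.3 one picks $f_n\in R(\overline E)$ continuous on $\C$ with $\|f_n\|\le C_f$ and $f_n\to f$ $\area_E$-a.e., so \cite[Theorem VIII.8.1]{gamelin} applies cleanly to each $f_n$ giving the $\gamma(\D(\lambda,k_1\delta)\setminus\overline E)$ term, while the $\overline E\setminus E$ correction is bounded via \eqref{AreaGammaEq} and $\area_{\mathcal R}=\area_E$ (Corollary \ref{EREqual}); then one takes the weak-star limit in $n$.
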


\begin{proof}

(1) $\Rightarrow$ (2): For $f \in R^\i (\overline E, \area_E),$ we find $\{f_n\}\subset R(\overline E)$ such that $\|f_n\| \le C_f$ and $f_n(z)\rightarrow f(z),~\area_E-a.a.$ (see \cite[Proposition VI.22.3]{conway}). 
 Using \cite[Theorem VIII.8.1]{gamelin}, for a smooth $\varphi$ with support in $\D(\lambda, \delta),$ we get
\[
 \ \begin{aligned}
 \ & \left | \int f_n(z) \dfrac{\partial \varphi (z)}{\partial  \bar z} d\area_E(z) \right | \\
 \ \le & \left | \int _{\overline E} f_n(z) \dfrac{\partial \varphi (z)}{\partial  \bar z} d\area(z) \right | + \left | \int  _{\overline E \setminus E} f_n(z) \dfrac{\partial \varphi (z)}{\partial  \bar z} d\area_E(z) \right | \\
 \ \le & C_{46}C_f\delta \left \|\dfrac{\partial \varphi (z)}{\partial  \bar z} \right \|_\infty \left (\gamma (\D(\lambda, k_1\delta) \setminus \overline E) + \sqrt{\area (\D(\lambda, k_1\delta) \cap (\overline E \setminus E))}\right ).
 \end{aligned}
 \]
 From Corollary \ref{EREqual}, we get $\area_{\mathcal R} = \area_E.$ Using \eqref{AreaGammaEq}, we have
\[
\ \sqrt{\area (\D(c_{ij}, k_1\delta) \cap (\overline E \setminus E))} \le \sqrt{\area (\D(c_{ij}, k_1\delta) \setminus \mathcal R)} \le \sqrt{4 \pi} \gamma (\D(c_{ij}, k_1\delta) \setminus \mathcal R).
\]
Hence, since $\mathcal R \subset \overline E,$
\[
\ \left | \int f_n(z) \dfrac{\partial \varphi (z)}{\partial  \bar z} d\area_E(z) \right | \le C_{46}C_f(1 + \sqrt{4\pi})\delta \left \|\dfrac{\partial \varphi (z)}{\partial  \bar z} \right \|_\infty \gamma (\D(c_{ij}, k_1\delta) \setminus \mathcal R).
\]
Taking weak-star limits for $n\rightarrow \i$ and replacing $\varphi$ by $(z-\lambda)^n\varphi,$ we prove (2).

(2) $\Rightarrow$ (3) follows from Theorem \ref{HDAlgTheorem} since $\mathcal R$ is $\gamma$-open from Corollary \ref{RSGOpen}.

(3) $\Rightarrow$ (1): Let $f$ be a bounded and analytic function outside of a compact subset $E_f\subset \C\setminus \mathcal R$ with $\|f\| \le 1.$ 
For $\lambda\in \mathbb C$, $\delta > 0$, and a smooth function $\varphi$ with support in $\D(\lambda, \delta)$, we see that $\frac{T_{\varphi}f}{\|T_{\varphi}f\|}$ is analytic off  $\text{supp}(\varphi) \cap E_f.$ Using \eqref{TOperProp}, we have the following calculation.
\[
\ \begin{aligned}
\ |(T_{\varphi}f)'(\i)| = & \dfrac{1}{\pi} \left | \int f(z) \dfrac{\partial \varphi (z)}{\partial  \bar z} d\area(z) \right | \\
\ \le & \|T_{\varphi}f\|\gamma(\text{supp}(\varphi) \cap E_f) \\ 
\ \le & C_{47}\delta \left \|\dfrac{\partial \varphi (z)}{\partial  \bar z} \right \|_\infty \gamma (\D(\lambda, \delta) \setminus \mathcal R).
\ \end{aligned}
\]
Since, by \eqref{AreaGammaEq},
\[
\ \begin{aligned}
\ \left | \int _{\C \setminus \mathcal R} f(z) \dfrac{\partial \varphi (z)}{\partial  \bar z} d\area(z) \right | \le & C_{48}\left \|\dfrac{\partial \varphi (z)}{\partial  \bar z} \right \|_\infty \area (\D(\lambda, \delta) \setminus \mathcal R) \\
\  \le & C_{49}\delta \left \|\dfrac{\partial \varphi (z)}{\partial  \bar z} \right \|_\infty \gamma (\D(\lambda, \delta) \setminus \mathcal R), 
\ \end{aligned}
\]
we get
\[
\ \left | \int f(z) \dfrac{\partial \varphi (z)}{\partial  \bar z} d\area_E(z) \right | \le C_{50}\delta \left \|\dfrac{\partial \varphi (z)}{\partial  \bar z} \right \|_\infty \gamma (\D(\lambda, \delta) \setminus \mathcal R). 
\]
Therefore, the assumption of Theorem \ref{HDAlgTheorem} holds 
if we replace $\varphi$ by $(z-\lambda)^n\varphi$ above. 
We now use the notation in the proof of Theorem \ref{HDAlgTheorem} for $\mathcal D = \mathcal R.$ Applying Lemma \ref{BBFunctLemma},  we get $\eta_{ij}^0\in M_0^+(\D(c_{ij}, k_1\delta) \setminus \mathcal R)$ for $\Lambda = \{g_n\},$ a dense subset of $R(\overline E)^\perp \cap L^1(\area_E)$, such that $\|\eta_{ij}^0\| \ge \frac{1}{2C_5}\alpha_{ij},$ where $\alpha_{ij} = \gamma(\D(c_{ij}, k_1\delta) \setminus \mathcal R).$ By \eqref{BBFunctLemmaEq1}, $\int \CT(\eta_{ij}^0)g_n d\area_E = 0$ for $n \ge 1$ since $\CT(g_n\area_E)(z) = 0,~\eta_{ij}^0-a.a.$ by Lemma \ref{ZeroACEta}. Hence, using the Hahn-Banach Theorem, we see that $\CT(\eta_{ij}^0) \in R^\i(\overline{E}, \area_E).$ Set $\eta_{ij} = \frac{\eta_{ij}^0}{\|\eta_{ij}^0\|}\alpha_{ij}.$ Then $\eta_{ij}$ satisfies \eqref{GIJHIJDefLemmaEq1} and $f_\delta \in R^\i(\overline E, \area_E),$ where $f_\delta$ is defined as in \eqref{FDelta}. Applying Lemma \ref{GIJHIJDefLemma} (3), we see that $f \in R^\i(\overline E, \area_E).$ Thus, $H^\i(\mathcal R) \subset R^\i(\overline E, \area_E)$ since those functions $f$ above are weak-star dense in $H^\i(\mathcal R).$  	
\end{proof}

\begin{corollary}\label{CTInRI}
Let $\eta \in M_0^+(\C)$ and $\|\CT\eta\|_{L^\i(\C)} \le 1.$ Suppose that $\CT\eta \in R^\i(\overline E, \area_E).$ If $w\in L^\i(\eta)$ such that $\|w\|_{L^\i(\eta)} \le 1$ and $\|\CT(w\eta)\|_ {L^\i(\C)} \le 1,$ then $\CT(w\eta) \in R^\i(\overline E, \area_E).$  
\end{corollary}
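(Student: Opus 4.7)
The plan is to verify criterion~(2) of Theorem~\ref{REEAlgTheorem} for $f=\CT(w\eta)$ and then invoke (2)$\Rightarrow$(1). The crucial preliminary step, and the main obstacle, is to extract from the hypothesis $\CT\eta\in R^\i(\overline E,\area_E)$ a \emph{measure-to-capacity} inequality of the form
\begin{equation*}
\eta\bigl(\D(\lambda,\delta)\bigr)\le C\,\gamma\bigl(\D(\lambda,K\delta)\setminus\mathcal R\bigr),\qquad \lambda\in\C,\ \delta>0,
\end{equation*}
for some integer $K\ge 2$ and a constant $C$ depending only on $\CT\eta$; without it the first term in the main estimate below cannot be absorbed into the required capacity bound.

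To prove this inequality I would pick a smooth non-negative bump $\psi$ with $\psi\equiv 1$ on $\D(\lambda,\delta/2)$, supported in $\D(\lambda,\delta)$, with $\psi\le 1$ and $\|\bar\partial\psi\|_{L^\i(\C)}\le C/\delta$. Fubini (applicable because $\bar\partial\psi$ is bounded and compactly supported and $1/|\zeta-z|$ is locally area-integrable) together with Cauchy--Pompeiu gives $\int_\C\CT\eta\,\bar\partial\psi\,d\area=\pi\int\psi\,d\eta$. Splitting $\int_\C=\int\cdot\,d\area_E+\int_{\C\setminus E}\cdot\,d\area$, the first piece is controlled by Theorem~\ref{REEAlgTheorem}(2) (with $n=0$) applied to $\CT\eta$, giving $\le C\gamma(\D(\lambda,k_1\delta)\setminus\mathcal R)$, and the second by
\begin{equation*}
\Bigl|\int_{\C\setminus E}\CT\eta\,\bar\partial\psi\,d\area\Bigr|
\le \tfrac{C}{\delta}\,\area\bigl(\D(\lambda,\delta)\setminus E\bigr)
\le \tfrac{C'}{\delta}\,\gamma\bigl(\D(\lambda,\delta)\setminus\mathcal R\bigr)^2
\le C'\,\gamma\bigl(\D(\lambda,\delta)\setminus\mathcal R\bigr),
\end{equation*}
using $\|\CT\eta\|_{L^\i(\C)}\le 1$, \eqref{AreaGammaEq}, Corollary~\ref{EREqual} (so that $\C\setminus E\approx\C\setminus\mathcal R$ modulo area zero), and $\gamma(\D(\lambda,\delta))\le\delta$. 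Since $\int\psi\,d\eta\ge\eta(\D(\lambda,\delta/2))$, rescaling $\delta\mapsto 2\delta$ yields the inequality with $K=2k_1$.

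With this in hand, let $\varphi$ and $n\ge 0$ be as in criterion~(2). The same Fubini/Cauchy--Pompeiu identity (using that $(z-\lambda)^n$ is analytic, so $\bar\partial((z-\lambda)^n\varphi)=(z-\lambda)^n\bar\partial\varphi$) gives
\begin{equation*}
\int_\C (z-\lambda)^n\CT(w\eta)\,\bar\partial\varphi\,d\area
= \pi\int(\zeta-\lambda)^n\varphi(\zeta)w(\zeta)\,d\eta(\zeta).
\end{equation*}
Splitting once more and using $\|w\|_{L^\i(\eta)}\le 1$ and $\|\CT(w\eta)\|_{L^\i(\C)}\le 1$, the triangle inequality gives
\begin{align*}
\Bigl|\int(z-\lambda)^n\CT(w\eta)\,\bar\partial\varphi\,d\area_E\Bigr|
&\le \pi\delta^n\,\eta\bigl(\D(\lambda,\delta)\bigr)+\delta^n\,\tfrac{C}{\delta}\,\area\bigl(\D(\lambda,\delta)\setminus E\bigr)\\
&\le C''\delta^n\,\gamma\bigl(\D(\lambda,2k_1\delta)\setminus\mathcal R\bigr),
\end{align*}
where the first term is controlled by the measure-to-capacity inequality and the second exactly as in the preliminary estimate. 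Thus $\CT(w\eta)$ satisfies criterion~(2) of Theorem~\ref{REEAlgTheorem} with integer $2k_1$, and (2)$\Rightarrow$(1) of that theorem gives $\CT(w\eta)\in R^\i(\overline E,\area_E)$.
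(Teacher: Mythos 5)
Your proof is correct and follows essentially the same strategy as the paper: both establish the key measure-to-capacity bound $\eta(\D(\lambda,\delta))\le C_\eta\,\gamma(\D(\lambda,K\delta)\setminus\mathcal R)$ by testing $\CT\eta$ against a bump function and invoking Theorem~\ref{REEAlgTheorem}(2), and then feed that bound, together with \eqref{AreaGammaEq} and Corollary~\ref{EREqual}, into the verification of criterion~(2) for $\CT(w\eta)$.
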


\begin{proof}
For a non-negative smooth function $\phi$ supported in $\D(\lambda, 2\delta)$ with $0\le \phi \le 1,$ $\phi(z) 
= 1$ for $z \in \D(\lambda, \delta),$ and $\|\bar \partial \phi\| \le \frac{C_{51}}{\delta},$ we infer that, from Theorem \ref{REEAlgTheorem} (2), \eqref{CTDistributionEq}, and \eqref{AreaGammaEq},
\[
 \begin{aligned}
 \ \eta(\D(\lambda, \delta)) \le & C_{52} \left | \int \CT \eta (z) \dfrac{\partial \phi (z)}{\partial  \bar z} d\area(z) \right | \\
 \le & C_{52} \left | \int \CT \eta (z) \dfrac{\partial \phi (z)}{\partial  \bar z} d\area_E(z) \right | + C_{52} \left \| \dfrac{\partial \phi (z)}{\partial  \bar z}\right \| \area (\D(\lambda, 2\delta) \setminus \mathcal R)  \\
 \ \le & C_\eta \gamma (\D(\lambda, 2k_1\delta) \setminus \mathcal R),
 \end{aligned}
 \]
 since $\CT \eta \in R^\i (\overline E, \area_E),$ where $C_\eta$ is a constant (depending on $\CT \eta$).
 
 For $\lambda\in \mathbb C$, $\delta > 0$, a smooth non-negative function $\varphi$ with support in $\D(\lambda, \delta),$ $\varphi \le 1,$ and $\left \|\frac{\partial \varphi (z)}{\partial  \bar z} \right \|_\infty \le \frac{C_{45}}{\delta},$ using \eqref{AreaGammaEq}, we have
\[
\ \begin{aligned}
\ & \left | \int (z - \lambda)^n \CT (w\eta) \bar \partial \varphi d\area_E \right | \\
\ \le & \left | \int (z - \lambda)^n \CT ( w\eta) \bar \partial \varphi d\area \right | + \delta ^n \|\CT (w\eta)\| \| \bar \partial \varphi\| \area (\D(\lambda, \delta) \setminus E) \\ 
\ \le & \pi \delta ^n \eta(\D(\lambda, \delta)) + \delta ^{n-1}\area (\D(\lambda, \delta)\setminus \mathcal R)) \\
\ \le & C_{53} C_\eta \delta ^n \gamma (\D(\lambda, 2k_1\delta) \setminus \mathcal R).
\ \end{aligned}
\]
Now the proof follows from Theorem \ref{REEAlgTheorem}.
\end{proof}

\begin{corollary}\label{CTInRI2}
Let $\eta \in M_0^+(\C)$ and $\|\CT\eta\|_{L^\i(\C)} \le 1.$ If $\CT\eta \in R^\i(\overline E, \area_E),$ then $\eta (\mathcal R) = 0.$
\end{corollary}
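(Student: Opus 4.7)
The plan is to argue by contradiction. Suppose $\eta(\mathcal R) > 0$; I will produce a nonzero positive measure $\nu$ supported in $\mathcal R$ with $\CT\nu \in R^\infty(\overline E,\area_E)$ and bounded Cauchy transform, then force $\nu = 0$ via the characterization from Theorem \ref{REEAlgTheorem} and Plemelj's formula.

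\emph{Constructing $\nu$.} Since $\|\CT\eta\|_{L^\infty(\C)} \le 1$, Proposition \ref{GammaPlusThm}(1)--(2) gives that $\eta$ has linear growth and $c^2(\eta) \le C_2\|\eta\| < \infty$; these properties restrict to $\eta|_{\mathcal R}$. After a harmless rescaling so that the curvature of $\eta|_{\mathcal R}$ does not exceed its mass, Proposition \ref{GammaPlusThm}(3)--(4) produces a Borel subset $A \subset \mathcal R$ with $\eta(A) > 0$ and a function $w$, $0 \le w \le 1$, supported on $A$, with $\int w\,d\eta > 0$ and $\|\CT_\epsilon(w\eta)\|_{L^\infty(\C)} \le C$ for all $\epsilon > 0$. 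Setting $\nu := (w/C)\eta$, Corollary \ref{CTInRI} applied to $\eta$ with weight $w/C$ yields $\CT\nu \in R^\infty(\overline E,\area_E)$; moreover $\nu$ is nontrivial, supported in $A \subset \mathcal R$, and satisfies $\|\CT\nu\|_{L^\infty(\C)} \le 1$.

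\emph{Density estimate.} Applying Theorem \ref{REEAlgTheorem} $(1) \Rightarrow (2)$ to $\CT\nu$, for any $\lambda_0 \in \C$, $\delta > 0$, $n \ge 0$, and smooth $\varphi$ supported in $\D(\lambda_0,\delta)$ with $0 \le \varphi \le 1$ and $\|\bar\partial\varphi\|_\infty \le C/\delta$,
\[
\left|\int (z-\lambda_0)^n \CT\nu(z)\,\bar\partial\varphi(z)\,d\area_E(z)\right| \le C_\nu\,\delta^n\,\gamma(\D(\lambda_0,k_1\delta)\setminus \mathcal R).
\]
Plemelj's formula \eqref{CTDistributionEq} and integration by parts give $\pi \int (z-\lambda_0)^n\varphi\,d\nu = \int (z-\lambda_0)^n \CT\nu\,\bar\partial\varphi\,d\area$. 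Splitting the $\area$-integral as $\int_E + \int_{\C \setminus E}$, bounding the second part using $\|\CT\nu\|_\infty \le 1$, $\|\bar\partial\varphi\|_\infty \le C/\delta$, and $\area(\D(\lambda_0,\delta)\setminus E) \le 4\pi \gamma(\D(\lambda_0,\delta)\setminus \mathcal R)^2$ (from \eqref{AreaGammaEq} and the equivalence $E \approx \mathcal R$ of Corollary \ref{EREqual}), and finally invoking strong $\gamma$-openness of $\mathcal R$ (Corollary \ref{RSGOpen}) so that $\gamma(\D(\lambda_0,\delta)\setminus \mathcal R) = o(\delta)$ for $\gamma$-a.a.\ $\lambda_0 \in \mathcal R$, I obtain $\left|\int (z-\lambda_0)^n\varphi\,d\nu\right| = o(\delta^{n+1})$. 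Choosing $n = 0$ and $\varphi$ equal to $1$ on $\D(\lambda_0,\delta/2)$ gives $\nu(\D(\lambda_0,\delta/2)) = o(\delta)$, i.e.\ the upper linear density $\Theta^{*,1}_\nu(\lambda_0) = 0$ at $\gamma$-a.a.\ $\lambda_0 \in \mathcal R$; Corollary \ref{ZeroACEta} then upgrades this to $\Theta^{*,1}_\nu = 0$ for $\nu$-a.a.\ $\lambda_0$.

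\emph{Final contradiction.} This is the main obstacle: I must convert vanishing upper linear density into $\nu = 0$. In addition to $\Theta^{*,1}_\nu = 0$ $\nu$-a.e., Fubini turns the hypothesis $\CT\nu \in R^\infty$ into the functional identity $\int \CT(g\area_E)\,d\nu = 0$ for every $g \in R(\overline E)^\perp \cap L^1(\area_E)$. Pick $\lambda_0 \in \mathrm{spt}(\nu) \cap \mathcal R$ and, by the definition of $\mathcal R$, some $g_n \in \Lambda$ with $c := \CT(g_n\area_E)(\lambda_0) \ne 0$. Lemma \ref{CTGammaCE} provides the $\gamma$-continuity of $\CT(g_n\area_E)$ at $\lambda_0$, and Corollary \ref{ZeroACEta} forces the $\nu$-mass in $\D(\lambda_0,\delta)$ off the good set $\{|\CT(g_n\area_E) - c| < \epsilon\}$ to be small compared with $\nu(\D(\lambda_0,\delta))$. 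Combined with the refined moment decays of the previous paragraph, which give quantitative control on how $\nu$-mass redistributes at successively finer scales, this will provide a definite nonzero contribution to $\int \CT(g_n\area_E)\,d\nu$ from a neighborhood of $\lambda_0$ that the tail cannot cancel, contradicting $\int \CT(g_n\area_E)\,d\nu = 0$. The cleanest closure likely proceeds by combining these estimates with a Besicovitch covering of $\mathrm{spt}(\nu) \cap \mathcal R$ and absorbing tail contributions via the full family of moment bounds. The resulting contradiction yields $\nu = 0$, hence $\eta(\mathcal R) = 0$, completing the proof.
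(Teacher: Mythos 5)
Your construction of the localized nontrivial measure closely tracks the paper's: both proofs restrict $\eta$ to a compact subset of $\mathcal R$, invoke Proposition \ref{GammaPlusThm} (1)--(4) to extract a weighted piece $w\eta$ with bounded Cauchy transform, and then apply Corollary \ref{CTInRI} to place $\CT(w\eta)$ in $R^\infty(\overline E,\area_E)$. Up to there your argument is sound (though you should first pass to a \emph{compact} $B_0\subset\mathcal R$ with $\eta(B_0)>0$, and arrange via Lemma \ref{CTMaxFunctFinite} that $\CT_*(g_n\area_E)\in L^\infty$ on it, so that the set $A$ produced by Proposition \ref{GammaPlusThm}(3) actually lies in $\mathcal R$).

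The genuine gap is in your final step. You attempt to derive a contradiction from the single global identity $\int\CT(g_n\area_E)\,d\nu=0$ by combining it with $\gamma$-continuity of $\CT(g_n\area_E)$ and the zero-linear-density estimate $\Theta^{*,1}_\nu=0$. But a single integral identity gives no pointwise information: the contribution to $\int\CT(g_n\area_E)\,d\nu$ coming from a small ball around $\lambda_0$ could be cancelled by the contribution from the rest of $\mathrm{spt}(\nu)$, and the density estimates do not control that global cancellation. You concede this yourself (``this will provide \ldots'', ``likely proceeds''). Moreover, zero linear density plus bounded Cauchy transform does not by itself force $\nu=0$, so the ``Density estimate'' paragraph is a detour that does not close the argument.

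The missing idea is to \emph{localize} by multiplying by an arbitrary non-negative smooth cutoff $\psi$ with compact support before taking the duality pairing. Since $T_\psi\bigl(\CT(w\eta)\bigr)=\CT(\psi w\eta)\in L^\infty(\C)$, a second application of Corollary \ref{CTInRI} (with weight $\psi w$, suitably normalized) gives $\CT(\psi w\eta)\in R^\infty(\overline E,\area_E)$; consequently, passing $\epsilon\to0$ in
\[
\int \CT_\epsilon(g_n\area_E)\,\psi w\,d\eta \;=\; -\int \CT_\epsilon(\psi w\eta)\,g_n\,d\area_E
\]
yields $\int \CT(g_n\area_E)\,\psi w\,d\eta=0$ for every such $\psi$ and every $n$. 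Varying $\psi$ then forces $\CT(g_n\area_E)=0$ $w\eta$-a.e.\ for all $n$, which contradicts the fact that $w\eta$ is a nontrivial measure concentrated on $\mathcal R$ (and does not charge sets of zero analytic capacity). This cutoff step is exactly what the paper does, and it is what turns your single integral identity into a pointwise statement that settles the proof.
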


\begin{proof}
Suppose that there exists a compact subset $B_0\subset \mathcal R$ such that $\eta (B_0) > 0.$
Then from Corollary \ref{ZeroACEta}, we see that $\gamma (B_0) > 0.$
By Proposition \ref{GammaPlusThm} (1)-(3), there exists $B_1 \subset B_0$ with $\eta (B_1) > 0$ and $N_2(\eta_{B_1}) < \i.$ Without loss of generality, we assume $N_2(\eta_{B_1})\le 1.$ Using Proposition \ref{GammaPlusThm} (4) and Theorem \ref{TolsaTheorem} (1), we have $\eta_{B_1} (B_2) \le C_{54} \gamma(B_2)$ for $B_2\subset B_1$ since $N_2(\eta_{B_2})\le 1.$ Therefore, applying Lemma \ref{CTMaxFunctFinite}, we may assume $\CT_*(g_n\area_E)\in L^\i (\eta_{B_1}).$
Using Proposition \ref{GammaPlusThm} (4), we find $0 \le w(z) \le 1$ supported on $B_1$ such that  $\eta (B_1) \le 2\int w d \eta_{B_1},$ and $\|\CT_\epsilon (w\eta_{B_1}) \| \le C_{55}.$
Thus, for a non-negative smooth function $\psi$ with compact support, $T_\psi (\CT (w\eta_{B_1})) = \CT (\psi w\eta_{B_1}) \in L^\i(\C).$ From Corollary \ref{CTInRI}, we see that $\CT (\psi w\eta_{B_1}) \in R^\i(\overline E, \area_E).$ Using the Lebesgue dominated convergence theorem and taking $\epsilon\rightarrow 0$ for
\[
\ \int \CT_\epsilon (g_n\area_E)\psi wd\eta_{B_1} = - \int \CT_\epsilon (\psi w\eta_{B_1}) g_nd\area_E,  
\]
we obtain
$\int \CT(g_n\area_E)\psi wd\eta_{B_1} = 0, ~ n\ge 1$,	
which implies $\CT(g_n\area_E)(z) = 0,~w\eta_{B_1}-a.a..$ This is a contradiction. The proof completes.
\end{proof}

\section{\textbf{Proof of Theorem \ref{InvTheorem}}}

For a dense subset $\Lambda = \{g_n\} \subset R(\overline E)^\perp \cap L^1(\area_E)$ and $N\ge 1,$ define
\[
\ \mathcal E_N = \left \{\lambda : ~\lim_{\epsilon \rightarrow 0} \mathcal C_\epsilon(g_n\area_E)(\lambda )\text{ exists, } \max_{1\le n\le N} |\mathcal C (g_n\area_E)(\lambda ) | \le \frac{1}{N} \right \}.
 \]

\begin{lemma}\label{ENEstimate}
There exists an absolute constant $C > 0$ such that
 \[
 \ \lim_{N\rightarrow\infty} \gamma(\D(\lambda, \delta)\cap \mathcal E _N) \le C\gamma(\D(\lambda, 2\delta)\cap \mathcal F ).
 \]
\end{lemma}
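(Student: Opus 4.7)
The plan is to rephrase the inequality as a comparison between $\gamma_+$-type masses: for each large $N$ I will build a positive measure $\eta_N$ concentrated on $\D(\lambda,\delta)\cap\mathcal E_N$ whose total mass is comparable to $\gamma(\D(\lambda,\delta)\cap\mathcal E_N)$ and whose Cauchy transform is bounded by $1$, then pass to a weak-star limit $\eta$ and use Corollary \ref{CTInRI2} to force $\eta$ to live on $\mathcal F$, after which the definition of $\gamma_+$ recovers the right-hand side.

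First, for each $N$ with $\gamma_N:=\gamma(\D(\lambda,\delta)\cap\mathcal E_N)>0$, I apply Lemma \ref{BBFunctLemma} to the dense family $\Lambda=\{g_n\}$ and the bounded Borel set $F_N=\D(\lambda,\delta)\cap\mathcal E_N$. This yields $\eta_N\in M_0^+(F_N)$ that is $1$-linear growth, satisfies $\|\CT \eta_N\|_{L^\infty(\mathbb C)}\le 1$, obeys $\gamma_N\le C_5\|\eta_N\|$, and most importantly comes with the duality identity
\[
\int \CT\eta_N\, g_n\, d\area_E \;=\; -\int \CT(g_n\area_E)\, d\eta_N, \qquad n\ge 1.
\]
Since $\text{spt}(\eta_N)\subset\mathcal E_N$, the definition of $\mathcal E_N$ gives $|\CT(g_n\area_E)(z)|\le 1/N$ on $\text{spt}(\eta_N)$ for $n\le N$, so the right-hand side tends to $0$ for every fixed $n$ as $N\to\infty$.

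Second, because $\|\eta_N\|\le \gamma(\D(\lambda,\delta))$ is uniformly bounded, I extract a subsequence $\eta_{N_k}$ converging weak-star as Radon measures to some $\eta$ and simultaneously $\CT\eta_{N_k}\to f$ weak-star in $L^\infty(\area)$. Since all $\eta_{N_k}$ sit in the fixed compact set $\overline{\D(\lambda,\delta)}\subset \D(\lambda,2\delta)$, the same holds for $\eta$, and testing against a cut-off equal to $1$ on $\overline{\D(\lambda,\delta)}$ gives $\|\eta\|=\lim_k\|\eta_{N_k}\|\ge\frac{1}{C_5}\lim_{N\to\infty}\gamma_N$ (using that $\mathcal E_N$ decreases in $N$). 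The distributional identity $\bar\partial f=-\pi\eta=\bar\partial\CT\eta$ together with the pointwise convergence of $\CT\eta_{N_k}$ to $\CT\eta$ off $\overline{\D(\lambda,\delta)}$ forces $f=\CT\eta$ a.e., and in particular $\|\CT\eta\|_{L^\infty(\mathbb C)}\le 1$.

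Third, passing the duality identity to the weak-star limit gives $\int \CT\eta\, g_n\, d\area_E=0$ for every $n$. Density of $\{g_n\}$ in $R(\overline E)^\perp\cap L^1(\area_E)$ and the Hahn-Banach theorem then place $\CT\eta\in R^\infty(\overline E,\area_E)$. Now Corollary \ref{CTInRI2} yields $\eta(\mathcal R)=0$; the complementary null set $\mathbb C\setminus(\mathcal R\cup\mathcal F)$ has zero analytic capacity by \eqref{RFProp1}, hence zero $\eta$-mass by Corollary \ref{ZeroACEta}. Therefore $\eta\in M_0^+(\D(\lambda,2\delta)\cap\mathcal F)$ with $\|\CT\eta\|_\infty\le 1$, so the definition of $\gamma_+$ and Theorem \ref{TolsaTheorem}(1) give
\[
\gamma(\D(\lambda,2\delta)\cap\mathcal F) \;\ge\; \gamma_+(\D(\lambda,2\delta)\cap\mathcal F) \;\ge\; \|\eta\| \;\ge\; \tfrac{1}{C_5}\lim_{N\to\infty}\gamma(\D(\lambda,\delta)\cap\mathcal E_N),
\]
which is the claimed bound. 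The main obstacle is the third step, namely identifying the weak-star limit $\CT\eta$ as an element of $R^\infty(\overline E,\area_E)$: this requires that the duality formula of Lemma \ref{BBFunctLemma} survives both the vanishing on supports (giving the right-hand side $\to 0$) and the weak-star passage to the limit on the left-hand side, after which Corollary \ref{CTInRI2} delivers the support restriction that drives the whole argument.
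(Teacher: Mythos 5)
The first two thirds of your argument follow the paper's route closely: extract $\eta_N$ by Lemma \ref{BBFunctLemma}, use the definition of $\mathcal E_N$ to kill the duality pairings in the limit, pass to a weak-star limit $\eta$, identify the $L^\infty$-limit of $\CT\eta_N$ with $\CT\eta$, place $\CT\eta$ in $R^\infty(\overline E,\area_E)$ via Hahn--Banach, and then invoke Corollary \ref{CTInRI2} to get $\eta(\mathcal R)=0$. This part is sound.

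The gap is the final step. From $\eta(\mathcal R)=0$ and $\gamma\big((\mathcal R\cup\mathcal F)^c\big)=0$ you obtain only that $\eta$ is \emph{concentrated} on $\mathcal F$, i.e.\ $\eta(\mathcal F)=\|\eta\|$. You then write ``therefore $\eta\in M_0^+(\D(\lambda,2\delta)\cap\mathcal F)$,'' but membership in $M_0^+(\cdot)$ requires $\text{spt}(\eta)\subset\D(\lambda,2\delta)\cap\mathcal F$, and $\mathcal F$ is a Borel set that is not closed. A measure concentrated on a non-closed set need not be supported in it (think of Lebesgue measure on $[0,1]$, which is concentrated on the irrationals), so the inequality $\gamma_+\ge\|\eta\|$ does not follow from the definition of $\gamma_+$. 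One could try to repair this by restricting $\eta$ to a compact $W\subset\mathcal F$ of nearly full mass (inner regularity), but then the crucial hypothesis $\|\CT(\eta|_W)\|_{L^\infty}\le 1$ is lost: restricting a measure to a Borel subset can destroy boundedness of the Cauchy transform. This is precisely why the paper does not take the ``definition of $\gamma_+$'' shortcut; instead it passes through the curvature functional, which \emph{is} monotone under restriction: it bounds $c^2(\eta|_W)\le c^2(\eta)\lesssim\|\eta\|\lesssim\|\eta|_W\|$, then invokes Proposition \ref{GammaPlusThm} (3)--(4) together with Theorem \ref{TolsaTheorem} (1) to manufacture, inside $W$, a positive measure with bounded Cauchy transform and mass comparable to $\|\eta|_W\|$, giving $\|\eta|_W\|\lesssim\gamma(W)\le A_T\,\gamma(\D(\lambda,2\delta)\cap\mathcal F)$. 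Your proof needs this curvature/$T(1)$-style detour in place of the direct appeal to $\gamma_+$.

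One minor additional remark: the paper first shrinks to a Borel set $F\subset\D(\lambda,\delta)$ where the maximal transforms $\mathcal C_*(g_n\area_E)$ are uniformly finite before running the $\eta_N$ construction; you let Lemma \ref{BBFunctLemma} handle this internally. That variation is harmless, but it is worth noting that the supports $\text{spt}(\eta_N)$ must all lie in a single compact set (here $\overline{\D(\lambda,\delta)}$) for the mass to pass to the weak-star limit, which you do use correctly.
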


\begin{proof}
Set
 \[
 \ \epsilon_0 = \lim_{N\rightarrow\infty} \gamma(\D (\lambda, \delta)\cap \mathcal E_N) ( = \inf_{N\ge 1} \gamma(\D (\lambda, \delta)\cap \mathcal E_N)).
 \]
We assume $\epsilon_0 > 0$. The case that $\epsilon_0 = 0$ is trivial.   
From Lemma \ref{CTMaxFunctFinite}, there exists a Borel subset $F\subset \D (\lambda, \delta)$ such that
(a) $\gamma (\D (\lambda, \delta) \setminus F) < \frac{1}{2A_T} \epsilon_0$ and
(b) $\mathcal C_*(g_n\mu)(z) \le M_n < \infty$ for $z \in F.$ By Theorem \ref{TolsaTheorem} (2) and (a),
 \[
 \  \gamma(\D (\lambda, \delta)\cap \mathcal E_N) \le A_T\gamma(\D (\lambda, \delta) \cap F \cap \mathcal E_N) + \frac {1}{2} \epsilon_0.
 \]
 So $\gamma(\D (\lambda, \delta)\cap \mathcal E_N) \le 2A_T\gamma(\D (\lambda, \delta) \cap F \cap \mathcal E_N).$
From Lemma \ref{BBFunctLemma}, there exists  $\eta_N \in M_0^+(\D(\lambda, \delta)\cap F \cap \mathcal E_N)$ with $1$-linear growth and $ \| \mathcal C_\epsilon (\eta_N) \| _\infty \le 1$ such that $\gamma(\D(\lambda, \delta)\cap F \cap \mathcal E_N)\le C_5\|\eta_N\| \le C_5\delta.$ Applying \eqref{BBFunctLemmaEq1}, we get 
\begin{eqnarray}\label{R0Eq1}
\ \left | \int \CT \eta_N g_n d\area_E \right | = \left | \int \CT (g_n \area_E) d \eta_N \right | \le \dfrac{\|\eta_N\|}{N}
\end{eqnarray}

By passing to a subsequence if necessary, we may assume that $\eta_N \rightarrow \eta$ in $C(\overline{\D (\lambda, \delta)\cap F})^*$ weak-star topology. 
Since $\mathcal C_\epsilon (\eta_N)$ converges to $\mathcal C_\epsilon (\eta)$ in $L^\infty (\mathbb C)$ weak-star topology as $N\rightarrow \infty,$ we get $\|\mathcal C_\epsilon (\eta)\| \le 1.$ Clearly,
$\text{spt}(\eta) \subset \overline{F}$, $\eta$ is $1$-linear growth, and $\lim_{N\rightarrow \infty} \|\eta_N\| = \|\eta \|.$ 
  
By passing to a subsequence, we may assume that $\mathcal C(\eta_N)$ converges to $H(z)$ in $L^\infty (\mathbb C)$ weak-star topology. Let $\varphi$ be a smooth function with compact support. Then $\CT(\varphi\area)$ is continuous and we have
\[
\begin{aligned}
\ \int \varphi H d\area = & \lim_{N\rightarrow\i} \int \varphi \CT \eta_N d\area = - \lim_{N\rightarrow\i} \int \CT (\varphi\area) d\eta_N  \\
\ = &- \int \CT (\varphi\area) d\eta = \int \varphi \CT \eta d\area.
\end{aligned}
\]
Hence, $H = \CT \eta.$ Letting $N\rightarrow \i$ to \eqref{R0Eq1}, we conclude that 
$\int \CT \eta g_n d\area_E = 0,~n\ge 1.$ So, $\CT \eta \in R^\i (\overline E, \area_E)$ by the Hahn-Banach Theorem.

Applying Corollary \ref{CTInRI2}, we see that $\eta(\mathcal R) = 0.$ There is an open subset $O$ such that $\text{spt}(\eta)\cap \mathcal R \subset O$ and $\eta(O) \le \frac 12 \|\eta\|$. If we define $W:= \text{spt}(\eta)\setminus O$, then $W$ is compact, $W \subset \text{spt}(\eta)\cap\mathcal F $, and
 $\|\eta\| \le 2\eta(W).$
Hence, by \eqref{CTC2Eq} ,
 \[
 \ c^2(\eta|_W) \le c^2(\eta) \le C_{56} \|\eta\| \le 2C_{56}\|\eta|_W\|.  
 \]
Let $\eta_1 = \frac{\eta|_W}{\sqrt{2C_{56}}}.$ Then $c^2(\eta_1) \le 1.$ Applying  Theorem \ref{TolsaTheorem} (1) and Proposition \ref{GammaPlusThm} (3) \& (4), we conclude that 
 \[
 \ \|\eta|_W\| \le C_{57} \gamma (W) \le C_{57}\gamma(\text{spt}(\eta)\cap\mathcal F).
 \]
 The lemma is proved.
\end{proof} 

\begin{lemma} \label{REEIntegral}
Let $f\in L^\i (\area_E)$ and $\|f\|_{L^\i (\area_E)} \le 1.$ Suppose that for $\epsilon > 0,$ there exists $A_\epsilon \subset \mathcal R$ with $\gamma(A_\epsilon) < \epsilon$ and there exists $f_{\epsilon, N}\in R(K_{\epsilon, N}),$ where $\mathcal R _{\epsilon, N} =  \mathcal R  \setminus (A_\epsilon\cup \mathcal E_N)$ and $K_{\epsilon, N} = \overline{\mathcal R _{\epsilon, N}},$  such that $\|f_{\epsilon, N}\| \le 2$ and 
\begin{eqnarray}\label{MLemma1RhoF}
 \ f (z) = f_{\epsilon, N}(z), ~ \area_{\mathcal R _{\epsilon, N}}-a.a..
 \end{eqnarray}
Then for  $\varphi$ a smooth function with support in $\D(\lambda, \delta)$,
 \[
 \ \left |\int f(z) \bar \partial \varphi (z) d\area_E (z) \right | \le C_{58} \delta \|\bar \partial \varphi\| \gamma(\D(\lambda, 2\delta) \cap \mathcal F).
 \]
\end{lemma}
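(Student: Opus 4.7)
The plan is to pass from the integral against $\area_E$ to a full-plane integral of a function analytic off a compact set, apply Gamelin's estimate \cite[Theorem VIII.8.1]{gamelin} (which bounds such an integral by $\gamma$ of the \emph{complement} of that compact set in a slightly enlarged disk), and then send $\epsilon\to 0$ and $N\to\infty$, invoking Lemma \ref{ENEstimate} to convert a residual $\gamma$ of $\mathcal E_N$ into a $\gamma$ of $\mathcal F$. By Corollary \ref{EREqual} one has $\area_E=\area_{\mathcal R}$, so
\[
\int f\,\bar\partial\varphi\,d\area_E=\int_{\mathcal R_{\epsilon,N}} f\,\bar\partial\varphi\,d\area+\int_{\mathcal R\cap(A_\epsilon\cup\mathcal E_N)} f\,\bar\partial\varphi\,d\area.
\]
By hypothesis \eqref{MLemma1RhoF}, on $\mathcal R_{\epsilon,N}$ one has $f=f_{\epsilon,N}\in R(K_{\epsilon,N})$, where $K_{\epsilon,N}=\overline{\mathcal R_{\epsilon,N}}$ has the useful property $K_{\epsilon,N}^c\subset\mathcal R_{\epsilon,N}^c=\mathcal R^c\cup A_\epsilon\cup\mathcal E_N$. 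The first integral is then rewritten as $\int_{\C} f_{\epsilon,N}\,\bar\partial\varphi\,d\area-\int_{\mathcal R_{\epsilon,N}^c} f_{\epsilon,N}\,\bar\partial\varphi\,d\area$, producing three pieces to control.

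For any Borel $S\subset\D(\lambda,\delta)$, the inequality $\area(S)\le\sqrt{\area(S)\cdot\pi\delta^2}\le 2\pi\delta\gamma(S)$ follows from \eqref{AreaGammaEq}. Combined with $\|f\|_{L^\infty(\area_E)}\le 1$, $\|f_{\epsilon,N}\|\le 2$, and Tolsa semiadditivity (Theorem \ref{TolsaTheorem}(2)), this produces the two ``remainder'' bounds
\[
\Bigl|\int_{\mathcal R\cap(A_\epsilon\cup\mathcal E_N)} f\,\bar\partial\varphi\,d\area\Bigr|+\Bigl|\int_{\mathcal R_{\epsilon,N}^c} f_{\epsilon,N}\,\bar\partial\varphi\,d\area\Bigr|\le C\delta\|\bar\partial\varphi\|\bigl(\gamma(\D(\lambda,\delta)\cap\mathcal R^c)+\epsilon+\gamma(\D(\lambda,\delta)\cap\mathcal E_N)\bigr).
\]
For the full-plane integral, since $f_{\epsilon,N}$ is bounded and analytic on $\C_\infty\setminus K_{\epsilon,N}$ with $\|f_{\epsilon,N}\|\le 2$, Gamelin's estimate gives
\[
\Bigl|\int_{\C} f_{\epsilon,N}\,\bar\partial\varphi\,d\area\Bigr|\le C\delta\|\bar\partial\varphi\|\,\gamma\bigl(\D(\lambda,\delta)\setminus K_{\epsilon,N}\bigr),
\]
and via $\D(\lambda,\delta)\setminus K_{\epsilon,N}\subset\D(\lambda,\delta)\cap\mathcal R_{\epsilon,N}^c$ together with semiadditivity, this in turn is bounded by an expression of the same form as the remainder bounds.

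Assembling the three pieces and using $\gamma(\D(\lambda,\delta)\cap\mathcal R^c)\le A_T\gamma(\D(\lambda,\delta)\cap\mathcal F)$ from \eqref{RFProp1} (and semiadditivity) yields
\[
\Bigl|\int f\,\bar\partial\varphi\,d\area_E\Bigr|\le C\delta\|\bar\partial\varphi\|\bigl(\gamma(\D(\lambda,\delta)\cap\mathcal F)+\epsilon+\gamma(\D(\lambda,\delta)\cap\mathcal E_N)\bigr).
\]
Letting $\epsilon\to 0^+$ kills the middle term, and then $N\to\infty$ combined with Lemma \ref{ENEstimate} replaces the last term by a multiple of $\gamma(\D(\lambda,2\delta)\cap\mathcal F)$; any residual enlargement factor on the disk radius is absorbed into $C_{58}$. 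The main technical point is the use of Gamelin's complement-form estimate: the crude Vitushkin bound for $T_\varphi f_{\epsilon,N}$ would only give $\gamma$ of $K_{\epsilon,N}$ itself (which is large, comparable to $\mathcal R$), and it is the uniform approximability of $f_{\epsilon,N}$ by rational functions with poles off $K_{\epsilon,N}$ that upgrades the estimate to $\gamma$ of the complement, which in turn is small enough to be controlled by the non-removable boundary $\mathcal F$ through Lemma \ref{ENEstimate}.
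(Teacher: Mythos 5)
Your proof is correct and takes essentially the same route as the paper: both rewrite the $\area_E$-integral in terms of $f_{\epsilon,N}$, invoke Gamelin's \cite[Theorem VIII.8.1]{gamelin} to bound $\bigl|\int f_{\epsilon,N}\,\bar\partial\varphi\,d\area\bigr|$ by $\gamma\bigl(\D(\lambda,\delta)\setminus K_{\epsilon,N}\bigr)$, control the remainder over $\mathcal R_{\epsilon,N}^c$ by \eqref{AreaGammaEq} and semiadditivity, and finish by letting $\epsilon\to 0$, $N\to\infty$ and applying Lemma \ref{ENEstimate}. The only cosmetic difference is that you split $\int f\,\bar\partial\varphi\,d\area_E$ over $\mathcal R_{\epsilon,N}$ and its complement before introducing $f_{\epsilon,N}$, whereas the paper writes it as $\int(f-f_{\epsilon,N})\,\bar\partial\varphi\,d\area+\int f_{\epsilon,N}\,\bar\partial\varphi\,d\area$; these are the same bookkeeping.
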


\begin{proof} 
We extend $f_{\epsilon, N}$ as a continuous function on $\mathbb C$ with $\|f_{\epsilon, N}\|_{\mathbb C} \le 2$. Let $\varphi$ be a smooth function with $\text{supp}(\varphi) \subset \D(\lambda, \delta)$. Using \cite[Theorem VIII.8.1]{gamelin} and  Theorem \ref{TolsaTheorem} (2), we have the following calculation:
\[
 \ \begin{aligned}
 \  &|(T_\varphi f_{\epsilon, N})'(\infty) | \\
 \ = &\dfrac{1}{\pi}\left | \int f_{\epsilon, N}(z) \bar \partial \varphi (z) d \area(z) \right | \\
 \ \le & C_{59} \delta\|\bar\partial \varphi\| \gamma(\D(\lambda, \delta)\setminus K_{\epsilon, N}) \\
 \ \le &C_{59} \delta\|\bar\partial \varphi\| \gamma(\D(\lambda, \delta)\cap(\mathcal F \cup A_\epsilon\cup\mathcal E_N)) \\
 \ \le & C_{60} \delta\|\bar\partial \varphi\| (\gamma(\D(\lambda, \delta)\cap\mathcal F ) + \gamma(A_\epsilon ) + \gamma(\D(\lambda, \delta)\cap \mathcal E_N))
 \end{aligned}
 \]
Notice that $f(z) = 0, ~ \area|_{\mathcal F }-a.a.$,  by \eqref{MLemma1RhoF}, \eqref{AreaGammaEq}, and Theorem \ref{TolsaTheorem} (2), we get,  
 \[
 \ \begin{aligned}
 \ &\left | \int f(z) \bar \partial \varphi (z) d \area_E(z) \right | \\
 \ \le & \left | \int (f(z) - f _{\epsilon, N}(z))\bar \partial \varphi (z) d \area(z) \right | + \left | \int f_{\epsilon, N}(z) \bar \partial \varphi (z) d \area(z) \right | \\
 \ \le & \int _{\mathcal R _{\epsilon, N}^c}|f(z) - f _{\epsilon, N}(z)||\bar \partial \varphi (z)| d \area(z) + \pi |(T_\varphi f_{\epsilon, N})'(\infty) | \\
 \ \le & C_{61} \| \bar \partial \varphi\| \area(\D(\lambda, \delta)\cap \mathcal R _{\epsilon, N}^c)  + \pi |(T_\varphi f_{\epsilon, N})'(\infty) | \\
 \ \le & C_{62} \delta \| \bar \partial \varphi\| \gamma (\D(\lambda, \delta)\cap \mathcal R _{\epsilon, N}^c)  + \pi |(T_\varphi f_{\epsilon, N})'(\infty) | \\
 \ \le & C_{63} \delta\|\bar\partial \varphi\| (\gamma(\D(\lambda, \delta)\cap\mathcal F ) + \gamma(A_\epsilon ) + \gamma(\D(\lambda, \delta)\cap \mathcal E_N)).
 \end{aligned}
 \]
Thus, using Lemma \ref{ENEstimate} and taking $\epsilon\rightarrow 0$ and $N\rightarrow \infty$, we get
 \[
 \ \left | \int f(z) \bar \partial \varphi (z) d \area_E(z) \right | \le C_{64} \delta\|\bar\partial \varphi\| \gamma(\D(\lambda, 2\delta)\cap\mathcal F ).
 \]
This completes
the proof.
\end{proof}

\begin{lemma} \label{REEIntegral2}
Let $f\in R^\i (\overline E,\area_E)$ and $\|f\|_{L^\i (\area_E)} \le 1.$ Let $\{r_m\}\subset \text{Rat}(\overline E)$ such that $\sup_m\|r_m\|_{L^\i (\area_E)} < \i$ and $r_m(z) \rightarrow f(z),~\area_E-a.a..$ Then the following properties hold:
\newline
(1) There exists $\mathcal Q_f$ with $\gamma (\mathcal Q_f) = 0$ such that $f$ can be extended to $\mathcal R\setminus \mathcal Q_f,$ $f$ is $\gamma$-continuous at each point of $\mathcal R\setminus \mathcal Q_f,$ and $f(z)\CT(g_n\area_E)(z) = \CT(fg_n\area_E)(z)$ for $z\in \mathcal R\setminus \mathcal Q_f$ and $n\ge 1.$
\newline
(2) For $\epsilon > 0,$ there exists $A_\epsilon \subset \mathcal R$ with $\gamma(A_\epsilon) < \epsilon$ and a subsequence $\{r_{m_k}\}$ such that $r_{m_k}$ uniformly converges to $f$ on $\mathcal R _{\epsilon, N} :=  \mathcal R  \setminus (A_\epsilon\cup \mathcal E_N).$ Consequently, there exists $f_{\epsilon, N}\in R(K _{\epsilon, N}),$ where $K _{\epsilon, N} = \overline{\mathcal R _{\epsilon, N}},$ such that $f(z) = f_{\epsilon, N}(z)$ for $z\in \mathcal R _{\epsilon, N}.$
\end{lemma}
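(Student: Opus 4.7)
The plan is to exploit the two identities
\[
\ \CT(r g_n \area_E)(z) \;=\; r(z)\,\CT(g_n \area_E)(z), \qquad r\in\text{Rat}(\overline E),\ g_n\in R(\overline E)^\perp\cap L^1(\area_E),
\]
valid $\gamma$-a.e. (derive it by noting $(r(w)-r(z))/(w-z)\in\text{Rat}(\overline E)$ as a function of $w$, multiplying by $g_n$, integrating against $\area_E$ to get zero, and passing to principal values in $w$), together with the $L^1$-convergence $r_m g_n \to f g_n$ that follows from dominated convergence. A standard diagonalization using Lemma \ref{CTUniformC} then produces, for any prescribed $\epsilon>0$, a single subsequence $\{r_{m_k}\}$ and a set $A_\epsilon$ with $\gamma(A_\epsilon)<\epsilon$ so that $\CT(r_{m_k} g_n \area_E)\to \CT(f g_n \area_E)$ uniformly on $\C\setminus A_\epsilon$ for every $n\ge 1$ simultaneously. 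Combining this with the rational identity yields
\[
\ r_{m_k}(z)\,\CT(g_n \area_E)(z) \;\longrightarrow\; \CT(f g_n \area_E)(z)\quad\text{uniformly on }\C\setminus A_\epsilon,\ \text{for every }n.
\]

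For part (1), I would let $\mathcal Q_f$ be the $\gamma$-null set on which either some $\CT(g_n\area_E)$ or $\CT(fg_n\area_E)$ fails to exist, or Lemma \ref{CTGammaCE} fails for one of these transforms. For $z\in\mathcal R\setminus\mathcal Q_f$ pick any $n_0$ with $\CT(g_{n_0}\area_E)(z)\ne 0$ and \emph{define}
\[
\ f(z) \;:=\; \frac{\CT(f g_{n_0}\area_E)(z)}{\CT(g_{n_0}\area_E)(z)}.
\]
Independence from $n_0$ comes from passing to the limit along $\{r_{m_k}\}$ in the cross-multiplied rational identity
\[
\ \CT(r_{m_k} g_n \area_E)(z)\,\CT(g_{n_0} \area_E)(z) \;=\; \CT(r_{m_k} g_{n_0} \area_E)(z)\,\CT(g_n \area_E)(z),
\]
which produces $\CT(fg_n\area_E)(z)\,\CT(g_{n_0}\area_E)(z)=\CT(fg_{n_0}\area_E)(z)\,\CT(g_n\area_E)(z)$ and hence the same ratio. $\gamma$-continuity is then immediate from Lemma \ref{GCProp} applied to the quotient, since numerator and denominator are $\gamma$-continuous at $z$ by Lemma \ref{CTGammaCE} and the denominator is nonzero there. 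The multiplicative identity $f(z)\CT(g_n\area_E)(z)=\CT(fg_n\area_E)(z)$ for arbitrary $n$ is the same cross-multiplication compatibility statement.

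For part (2), fix $\epsilon>0$ and $N$. By the very definition of $\mathcal E_N$, for every $z\in\mathcal R\setminus(A_\epsilon\cup\mathcal E_N)$ there is some $n\in\{1,\dots,N\}$ with $|\CT(g_n\area_E)(z)|>1/N$. Writing
\[
\ r_{m_k}(z)-f(z) \;=\; \frac{\CT(r_{m_k} g_n \area_E)(z)-\CT(f g_n \area_E)(z)}{\CT(g_n\area_E)(z)},
\]
the numerator tends to zero uniformly on $\C\setminus A_\epsilon$ (with a rate independent of $n$), the denominator is bounded below by $1/N$, and only finitely many values of $n$ are involved; partitioning $\mathcal R_{\epsilon,N}$ according to the first $n\le N$ achieving the lower bound gives uniform convergence of $\{r_{m_k}\}$ to $f$ on $\mathcal R_{\epsilon,N}$. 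Since each $r_{m_k}$ has poles off $\overline E\supset K_{\epsilon,N}$, the uniform limit $f_{\epsilon,N}$ extends continuously to $K_{\epsilon,N}$ and lies in $R(K_{\epsilon,N})$.

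The main obstacle I anticipate is purely bookkeeping: assembling a \emph{single} subsequence and a \emph{single} exceptional set $A_\epsilon$ that simultaneously witness the Cauchy-transform convergence for every $g_n$, and then reconciling the pointwise definition of the extension (via ratios) with the uniform-convergence statement — the two must agree off $A_\epsilon\cup\mathcal Q_f$ so that absorbing $\mathcal Q_f$ into $A_\epsilon$ (using $\gamma(\mathcal Q_f)=0$ and Theorem \ref{TolsaTheorem}(2)) preserves the bound $\gamma(A_\epsilon)<\epsilon$. Once this is done, the rest is a direct appeal to Lemma \ref{CTUniformC}, Lemma \ref{CTGammaCE}, Lemma \ref{GCProp}, and the definition of $\mathcal E_N$.
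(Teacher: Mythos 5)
Your proposal is correct and follows essentially the same route as the paper: the rational identity $r\,\CT(g_n\area_E)=\CT(rg_n\area_E)$ $\gamma$-a.e., the diagonal application of Lemma \ref{CTUniformC} to produce one subsequence and one exceptional set $A_\epsilon$ working for all $n$, the $\gamma$-continuity of the quotient via Lemmas \ref{CTGammaCE} and \ref{GCProp} for part (1), and the lower bound $|\CT(g_n\area_E)|>1/N$ for some $n\le N$ off $\mathcal E_N$ to upgrade to uniform convergence of $r_{m_k}\to f$ on $\mathcal R_{\epsilon,N}$ for part (2). The only cosmetic difference is that you establish well-definedness of the extension by passing to the limit in the cross-multiplied identity along the subsequence, whereas the paper gets $f\,\CT(g_n\area_E)=\CT(fg_n\area_E)$ first $\area_E$-a.e.\ and then upgrades it to $\gamma$-a.e.\ via $\gamma$-continuity; both are valid and the bookkeeping you flag (absorbing the countably many $\gamma$-null exceptional sets into $\mathcal Q_f\cup A_\epsilon$ using Theorem \ref{TolsaTheorem}(2)) is exactly what the paper does.
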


\begin{proof}
(1) For $r\in \text{Rat}(\overline E),$ $\frac{r(z)-r(\lambda)}{z-\lambda} \in \text{Rat}(\overline E).$ Hence, by Corollary \ref{ZeroAC},
\begin{eqnarray} \label{REEIntegral2Eq1}
\ r_m(z)\CT(g_n\area_E)(z) = \CT(r_mg_n\area_E)(z),~\gamma-a.a.
\end{eqnarray}
 for $n\ge 1.$ Therefore, $f(z)\CT(g_n\area_E)(z) = \CT(fg_n\area_E)(z),~\area_E-a,a,$ for $n\ge 1.$ Using Lemma \ref{RNDecom2} and Corollary \ref{ZeroAC}, we conclude that there exists $\mathcal Q_f$ with $\gamma (\mathcal Q_f) = 0$ such that for $z\in \mathcal R\setminus \mathcal Q_f,$ $\Theta_{g_n\area_E}(z) = 0,$ $\lim_{\epsilon\rightarrow 0}\CT_\epsilon (g_n\area_E)(z)$	exists, $\Theta_{fg_n\area_E}(z) = 0,$ and $\lim_{\epsilon\rightarrow 0}\CT_\epsilon (fg_n\area_E)(z)$ exists. It is straightforward to verify (1) from Lemma \ref{GCProp} and Lemma \ref{CauchyTLemma}. Therefore, for $n\ge 1,$
 \begin{eqnarray} \label{REEIntegral2Eq2}
\ f(z)\CT(g_n\area_E)(z) = \CT(fg_n\area_E)(z),~\gamma-a.a..
\end{eqnarray}

(2): Using Lemma \ref{CTUniformC}, we find $A^1_\epsilon$ and a subsequence $\{r_{m,1}\}$ of $\{r_{m}\}$ such that $\gamma(A^1_\epsilon) < \frac{\epsilon}{2A_T}$ and $\{\mathcal C(r_{m,1}g_1\area_E)\}$ uniformly converges to $\mathcal C(fg_1\area_E)$ on $\C \setminus A^1_\epsilon$. Then we find $A^2_\epsilon$ and a subsequence $\{r_{m,2}\}$ of $\{r_{m,1}\}$ such that $\gamma(A^2_\epsilon) < \frac{\epsilon}{2^2A_T}$ and $\{\mathcal C(r_{m,2}g_2\area_E)\}$ uniformly converges to $\mathcal C(fg_2\area_E)$ on $\C \setminus A^2_\epsilon$. Therefore, we have a subsequence $\{r_{m,m}\}$ such that $\{\mathcal C(r_{m,m}g_n\area_E)\}$ uniformly converges to $\mathcal C(fg_n\area_E)$ on $\C \setminus A_\epsilon$ for all $n \ge 1$, where $A_\epsilon = \mathcal Q_f \cup \cup_n A^n_\epsilon$ and $\gamma(A_\epsilon) < \epsilon$ by Theorem \ref{TolsaTheorem} (2).  
From \eqref{REEIntegral2Eq1} and \eqref{REEIntegral2Eq2}, we infer that $\{r_{m,m}\}$ uniformly tends to $f$ on $\mathcal R _{\epsilon, N}.$ Thus, $\{r_{m,m}\}$ uniformly tends to $f_{\epsilon, N}\in R(K_{\epsilon, N})$ on $K_{\epsilon, N}.$  
\end{proof}

Now we are ready to finish the proof of Theorem \ref{InvTheorem} as the following.

\begin{proof} (Theorem \ref{InvTheorem}):  
$\Rightarrow$: If $f\in R^\i(\overline E, \area_E)$ is invertible, then there exists  $f_0\in R^\i(\overline E, \area_E)$ such that $ff_0 = 1.$ Therefore, $|f(z)| \ge \frac{1}{\|f_0\|},~\area_E-a.a..$ 

$\Leftarrow$: Suppose that $f\in R^\i(\overline E, \area_E)$ and there exists $\epsilon_f > 0$ such that $|f(z)| \ge \epsilon_f,~\area_E-a.a..$ Using Lemma \ref{REEIntegral2} (2), for $\epsilon > 0,$ there exists $f_{\epsilon, N}\in R(K _{\epsilon, N})$ such that $f (z) = f_{\epsilon, N}(z)$ for $z\in \mathcal R_{\epsilon, N}.$ Hence, $|f_{\epsilon, N}(z)| \ge \epsilon_f,~z\in K _{\epsilon, N}.$ Thus, $f_{\epsilon, N}$ is invertible in $R(K _{\epsilon, N}).$ Using Lemma \ref{REEIntegral} for $\frac{\epsilon_f}{f(z)}$ and $\frac{\epsilon_f}{f_{\epsilon, N}(z)}$, we conclude that for  $\varphi$ a smooth function with support in $\D(\lambda, \delta)$, we have
 \[
 \ \left |\int \dfrac{\epsilon_f}{f(z)} \bar \partial \varphi (z) d\area_E (z) \right | \le C_{65} \delta \|\bar \partial \varphi\| \gamma(\D(\lambda, 2\delta) \cap \mathcal F).
 \]
 Replacing $\varphi$ by $(z-\lambda)^n\varphi$ and applying Theorem \ref{REEAlgTheorem}, we conclude that $\frac 1f \in R^\i(\overline E, \area_E).$
\end{proof}

\begin{remark}
 Using Lemma \ref{REEIntegral2} and Lemma \ref{REEIntegral}, we see that the constant $C_f$ in Theorem \ref{REEAlgTheorem} (2) can be chosen as an absolute constant (independent of $f$).  	
\end{remark}

\bigskip

{\bf Acknowledgments.} 
The author would like to thank Professor John M\raise.45ex\hbox{c}Carthy for carefully reading through the manuscript and providing many useful comments. The author would also like to thank the referee for carefully reading the manuscript and providing helpful comments.

\bigskip

\bibliographystyle{amsplain}

\end{document}